\numberwithin{equation}{section}
\definecolor{DPurple}{rgb}{0.46,0.2,0.69}
\definecolor{Wine}{rgb}{0.7,0,0.2}
\theoremstyle{definition}
\newtheorem{definition}{Definition}[section]
\theoremstyle{remark}
\newtheorem{remark}[definition]{Remark}
\theoremstyle{plain}
\newtheorem{theorem}[definition]{Theorem}
\newtheorem{result}[definition]{Result}
\newtheorem{lemma}[definition]{Lemma}
\newtheorem{proposition}[definition]{Proposition}
\newtheorem{corollary}[definition]{Corollary}
\newcommand{\OM}{\Omega}
\newcommand{\D}{\mathbb{D}}
\newcommand{\smoo}{\mathcal{C}}
\newcommand{\emb}{\boldsymbol{{\sf j}}}
\newcommand{\bcdot}{\boldsymbol{\cdot}}
\newcommand{\lrarw}{\longrightarrow}
\newcommand{\btl}{\blacktriangleleft}
\newcommand{\bdy}{\partial}
\newcommand{\vbdy}{\partial_{{\rm visi}}}
\newcommand{\wvbdy}{\partial_{{\rm wvisi}}}
\newcommand{\lbdy}{\partial_{{\rm lg}}}
\newcommand{\N}{\mathbb{N}}
\newcommand{\Cn}{\mathbb{C}^n}
\newcommand{\C}{\mathbb{C}} 
\newcommand{\R}{\mathbb{R}}
\newcommand{\Omc}{{\overline{\Omega}}^{\infty}}
\newcommand{\wt}{\widetilde}
\begin{document}

\title[Visibility domains in complex manifolds]{Visibility domains relative to the Kobayashi distance \\
in complex manifolds}

\author{Rumpa Masanta}
\address{Department of Mathematics, Indian Institute of Science, Bangalore 560012, India}
\email{rumpamasanta@iisc.ac.in}

\begin{abstract}
In this paper, we extend the notion of visibility relative to the Kobayashi distance to domains in arbitrary
complex manifolds. Visibility here refers to a property resembling visibility in the sense of
Eberlein--O'Neill for Riemannian manifolds. Since it is difficult, in general, to determine whether
domains are Cauchy-complete with respect to the Kobayashi distance, we do not assume so here. We
provide many sufficient conditions for visibility. We establish a Wolff--Denjoy-type theorem in a very
general setting as an application. We also explore some connections between visibility and Gromov
hyperbolicity for Kobyashi hyperbolic domains in the above setting. 
\end{abstract}

\keywords{Complex manifolds, hyperbolic imbedding, Kobayashi metric, visibility}
\subjclass[2020]{Primary: 32F45, 32Q45, 53C23 ; Secondary: 32H50, 32Q05}

\maketitle

\vspace{-5mm}
\section{Introduction and statement of results}\label{S:intro}
In the past few years, Bharali--Zimmer introduced a property that may be viewed as a weak notion of
negative curvature for domains $\Omega\varsubsetneq\Cn$ viewed as metric spaces equipped with the
Kobayashi distance $K_\Omega$\,---\,first for bounded domains \cite{bharalizimmer:gdwnv17} and later for
unbounded domains \cite{bharalizimmer:gdwnv23}. This property of $\Omega\varsubsetneq\Cn$ has been
used to understand extensions and the behaviour of certain holomorphic mappings into $\Omega$, and also
to study properties of $K_\Omega$: see, for
instance, \cite{bharalizimmer:gdwnv17, bharalimaitra:awnovfoewt21, braccinikolovthomas:vkgcdrp22,
chandelmitrasarkar:nvwrkdca21, bharalizimmer:gdwnv23, sarkar:lkdavd23, nikilovoktenthomas:lgnvwrKdc24}.
In \cite{bharalizimmer:gdwnv17,
bharalizimmer:gdwnv23}, the property referenced above was called the visibility property with respect to
$K_\Omega$ because it is reminiscent of visibility in the sense of Eberlein--O'Neill
\cite{eberleinneill:vm73}. Very roughly, the visibility property is that all geodesics originating
sufficiently close to and terminating sufficiently close to two distinct points in $\bdy\Omega$ must bend
uniformly into $\Omega$.
\smallskip

Motivated by the usefulness of the visibility property, we extend this notion to Kobayashi
hyperbolic domains $\Omega\varsubsetneq X$ (i.e., domains on which the Kobayashi pseudodistance $K_\Omega$
is a distance) for arbitrary complex manifolds $X$, and explore some of its consequences.
\smallskip

Given a Kobayashi hyperbolic domain $\Omega$ in a complex manifold $X$, if 
$\dim_{\C}(X)\geq 2$, it is in general very difficult to determine whether the metric space
$(\Omega, K_{\Omega})$ is Cauchy-complete\,---\,even when $\Omega\varsubsetneq\Cn$\,---\,and, so,
whether $(\Omega, K_{\Omega})$ is a geodesic space. Thus, the formal definition of the visibility
property requires some care. In the process, we get two closely related notions of visibility\,---\,the
second notion being termed 
\emph{weak visibility with respect to $K_\Omega$}. We will formally define both these notions below.
Functionally, these two notions were introduced to study continuous extensions to $\overline D$ of
holomorphic mappings between domains $f:D\lrarw\Omega$ when $\Omega$ has the (weak) visibility property.
To quote Bharali--Zimmer \cite{bharalizimmer:gdwnv23}, for $\Omega$ as above:
\begin{itemize}
 \item  weak visibility with respect to $K_\Omega$ is the property that allows one to deduce continuous extension for
 isometries with respect to $K_D$ and $K_\Omega$.
 \item visibility with respect to $K_\Omega$ is the property that allows one to deduce
 continuous extension for continuous \textbf{quasi}-isometries with respect to $K_D$ and $K_\Omega$.
\end{itemize}
That being said, in this paper we shall not discuss results on continuous extension since
the proofs involving (target spaces that are) domains in $\Cn$ extend verbatim to the more general
setting of this paper. The results we shall present are (with perhaps one exception) such that their
proofs are \textbf{not} routine extensions of the arguments used for domains in $\Cn$.
\smallskip

\subsection{General theorems on visibility}\label{SS:general-visibility}
We first formally define the notion of visibility discussed above. Before we do so, recall the
discussion above on the difficulty of knowing whether $(\Omega, K_{\Omega})$ is
Cauchy-complete (for brevity: $\Omega$ is \emph{Kobayashi complete})
and, consequently, whether any pair of distinct points admits a geodesic joining them.
This difficulty is overcome by considering \emph{$(\lambda, \kappa)$-almost-geodesics} (where 
$\lambda\geq 1$ and $\kappa\geq 0$), which will serve as substitutes for geodesics. A
$(\lambda, \kappa)$-almost-geodesic is, essentially, a kind of ``nice'' quasi-geodesic: we refer the
reader to Section~\ref{SS:AG} for the precise definition. The relevance of these curves is as follows:
given any $\lambda\geq 1$ and $\kappa > 0$, for any two distinct points $x, y\in \Omega$, there exists
a $(\lambda, \kappa)$-almost-geodesic joining them. This is a consequence of
Proposition~\ref{P:almost-geodesic}. We can now give the following

\begin{definition}\label{D:visible}
Let $X$ be a complex manifold and $\Omega\varsubsetneq X$ be a Kobayashi hyperbolic domain.
\begin{itemize}[leftmargin=25pt]
 \item[$(a)$] Let $p,q\in\bdy\Omega$, $p\neq q$. We say that the pair $(p,q)$ satisfies the
 \emph{visibility condition with respect to $K_\Omega$} if there exist neighbourhoods $U_p$ of $p$ and
 $U_q$ of $q$ in $X$ such that
 $\overline{U_p}\cap\overline{U_q}=\emptyset$ and such that for each $\lambda\geq 1$ 
 and each $\kappa\geq 0$, there
 exists a compact set $K\subset \Omega$ such that the image of each $(\lambda,\kappa)$-almost-geodesic
 $\sigma:[0,T]\lrarw\Omega$ with $\sigma(0)\in U_p$ and $\sigma(T)\in U_q$ intersects $K$.
 \item[$(b)$] We say that $\bdy\Omega$ is \emph{visible} if every pair of points $p,q\in\bdy\Omega$,
 $p\neq q$, satisfies
 the visibility condition with respect to $K_\Omega$.
 \item[$(c)$] Let $p,q\in\bdy\Omega$, $p\neq q$. We say that the pair $(p,q)$ satisfies the \emph{weak
 visibility condition with respect to $K_\Omega$} if the condition in $(a)$ is satisfied \textbf{only}
 for $\lambda=1$. We say
 that $\bdy\Omega$ is \emph{weakly visible} if every pair of points $p,q\in\bdy\Omega$, $p\neq q$,
 satisfies the weak visibility condition with respect to $K_\Omega$.
\end{itemize}
\end{definition}

We first explore the question whether, with $\Omega$ as at the top of this paper,
$\bdy\Omega$ being locally visible is the same as $\bdy\Omega$ being visible. Firstly, we clarify what
``$\bdy\Omega$ being locally visible'' means.

\begin{definition}\label{D:locally-visible}
Let $X$ be a complex manifold and $\Omega\varsubsetneq X$ be a domain.
\begin{itemize}[leftmargin=25pt]
 \item[$(a)$]Let $p\in\bdy\Omega$. We say that 
 $\bdy\Omega$ is \emph{locally visible at $p$} if there exists a neighbourhood $U_p$ of $p$ in $X$
 such that $U_p\cap\Omega$ is connected and is a Kobayashi hyperbolic domain and, there exists a
 neighbourhood $V_p$ of $p$ in $X$, 
 $V_p\subseteq U_p$, such that every pair of distinct points
 $q_1,q_2\in V_p\cap\bdy\Omega$ satisfies the visibility condition with respect to
 $K_{U_p\cap\Omega}$.
 \item[$(b)$]
 We say that $\bdy\Omega$
 is \emph{locally visible} if $\bdy\Omega$ is locally visible at each boundary point $p\in\bdy\Omega$.
 \item[$(c)$] Let $p\in\bdy\Omega$. We say that 
 $\bdy\Omega$ is \emph{locally weakly visible at $p$} if the condition in $(a)$ is satisfied
 \textbf{only} for $\lambda=1$.
 We say that $\bdy\Omega$ is \emph{locally weakly visible} if $\bdy\Omega$ is locally weakly
 visible at
 each boundary point $p\in\bdy\Omega$.  
\end{itemize}
\end{definition}

The above question was first explored by Bracci \emph{et al.} in
\cite{braccigaussiernikolovthomas:lgvkd23} in the context of bounded Kobayashi complete domains in
$\Cn$. We extend their analysis to domains in arbitrary complex manifolds that are not necessarily
relatively compact and not necessarily Kobayashi complete. Also see \cite{chandelgoraimaitrasarkar:vposva24}
for a result in the spirit of
\cite{braccigaussiernikolovthomas:lgvkd23} for Kobayashi hyperbolic domains in $\Cn$ that are not assumed
to be Kobayashi complete.
Our result, however, is \textbf{not} a routine
extension of the analysis in \cite{braccigaussiernikolovthomas:lgvkd23}. To elaborate upon this point,
let us first state a definition.

\begin{definition}\label{D:hyp_imb}
Let $X$ be a complex manifold and let $Z$ be a connected complex submanifold in $X$. $Z$ is said to be
\emph{hyperbolically imbedded in $X$} if for each $x\in\overline{Z}$ and for each neighbourhood $V_x$ of
$x$ in $X$, there exists a neighbourhood $W_x$ of $x$ in $X$, $\overline{W_x}\subset V_x$, such that
\[
  K_Z(\overline{W_x}\cap Z, Z\setminus V_x)>0.
\] 
\end{definition}

\begin{remark}\label{rem:hyp-imbed}
Clearly, $Z$ is hyperbolically imbedded in $X$ if and only if, for $x,y\in\overline{Z}$ and $x\ne y$,
there exist neighbourhoods $V_x$ of
$x$ and $V_y$ of $y$ in $X$ such that $K_Z(V_x\cap Z, V_y\cap Z)>0$.  
\end{remark}

The notion of $\bdy\Omega$ being locally weakly visible at $p\in \bdy\Omega$ is the closer
of the two notions in Definition~\ref{D:locally-visible} to local visibility at $p\in \bdy\Omega$ as
considered in \cite{braccigaussiernikolovthomas:lgvkd23}. For either notion, the extension of
the arguments in \cite{braccigaussiernikolovthomas:lgvkd23} to our setting faces several obstacles. E.g.,
with $X$ as above, it is not true in general that one can fix a Hermitian metric $h$ on $X$ such that the
open balls given by the associated distance $d_h$ are Kobayashi hyperbolic. If we assume that
$\bdy\Omega$ is locally visible (resp., locally weakly visible), then, by augmenting this by the 
assumption
that $\Omega$ is hyperbolically imbedded, we are able to deal with the obstacles mentioned
and deduce that $\bdy\Omega$ is visible (resp., weakly
visble). But the additional assumption proves not to be unnatural because 
the converses of the latter statements hold true! This summarises the following two theorems.

\begin{theorem}\label{T:local-global-visibility}
Let $X$ be a complex manifold and let $\Omega\varsubsetneq X$ be a Kobayashi hyperbolic domain.
Then, the following conditions are equivalent:
\begin{itemize}[leftmargin=25pt]
 \item[$(a)$] $\bdy\Omega$ is locally visible and $\Omega$ is a hyperbolically imbedded domain,
 \item[$(b)$] $\bdy\Omega$ is visible.
\end{itemize}
\end{theorem}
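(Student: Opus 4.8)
The plan is to prove the two implications $(a)\Leftrightarrow(b)$ separately, with a comparison of the Kobayashi distances $K_{U\cap\Omega}$ and $K_\Omega$ on small neighbourhoods of boundary points serving as the common engine. Concretely, I would first establish a \emph{localization lemma}: if $\Omega$ is hyperbolically imbedded in $X$ and $p\in\bdy\Omega$, then for every sufficiently small neighbourhood $U_p$ of $p$ (with $U_p\cap\Omega$ Kobayashi hyperbolic) there are a smaller neighbourhood $V_p\subseteq U_p$ and a constant $C\ge 0$ with $K_\Omega(z,w)\le K_{U_p\cap\Omega}(z,w)\le K_\Omega(z,w)+C$ for all $z,w\in V_p\cap\Omega$, together with the corresponding comparison of the infinitesimal Kobayashi--Royden metrics on $V_p\cap\Omega$. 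The left inequality is merely the distance-decreasing property of the inclusion $U_p\cap\Omega\hookrightarrow\Omega$; the right one is where hyperbolic imbeddedness (via Remark~\ref{rem:hyp-imbed}) enters, keeping $K_\Omega$ from being much smaller than $K_{U_p\cap\Omega}$ near $p$. This lemma is exactly what compensates for the difficulty, noted above, that one cannot in general fix a Hermitian metric on $X$ whose small balls are Kobayashi hyperbolic.

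For $(a)\Rightarrow(b)$, the non-routine direction, I would fix distinct $p,q\in\bdy\Omega$ and argue by contradiction. If $(p,q)$ violates the visibility condition, then, shrinking neighbourhoods along a sequence, one obtains $\lambda\ge 1$, $\kappa\ge 0$ and $(\lambda,\kappa)$-almost-geodesics $\sigma_n:[0,T_n]\lrarw\Omega$ with $\sigma_n(0)\to p$, $\sigma_n(T_n)\to q$ whose images leave every compact subset of $\Omega$. Using that $\Omega$ is hyperbolically imbedded, I would show that such escaping almost-geodesics accumulate, in the ambient topology, only on $\bdy\Omega$, and extract subarcs lying in a fixed small neighbourhood $V_{p'}$ of a boundary point they approach. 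Feeding these subarcs through the localization lemma reinterprets them as $(\lambda,\kappa')$-almost-geodesics for the \emph{local} distance $K_{U_{p'}\cap\Omega}$: the lower quasi-geodesic bound transfers for free since $K_{U_{p'}\cap\Omega}\ge K_\Omega$, while the upper (length) bound is recovered from the infinitesimal comparison. Local visibility (Definition~\ref{D:locally-visible}) near the boundary point(s) that $\sigma_n$ approaches then forces these localized subarcs to meet a compact subset of $U_{p'}\cap\Omega$, contradicting the fact that $\sigma_n$ escapes every compact subset of $\Omega$. The bookkeeping here --- tracking the last exit from $V_p$, the first entry into $V_q$, and the behaviour in between, where hyperbolic imbeddedness supplies a uniform positive lower bound on $K_\Omega$-distances between separated neighbourhoods --- follows the scheme of \cite{braccigaussiernikolovthomas:lgvkd23} but must be carried out intrinsically.

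For $(b)\Rightarrow(a)$ I would prove the two conclusions in turn. For hyperbolic imbeddedness, I would verify the criterion of Remark~\ref{rem:hyp-imbed}: if it fails there are $x\ne y\in\overline{\Omega}$ and sequences $z_n\to x$, $w_n\to y$ in $\Omega$ with $K_\Omega(z_n,w_n)\to 0$. The classical fact that $K_\Omega$ induces the manifold topology on the hyperbolic domain $\Omega$ rules out $x$ or $y$ lying in $\Omega$, so $x,y\in\bdy\Omega$; a distance-collapsing sequence between neighbourhoods of two \emph{distinct} boundary points then produces $(1,\kappa)$-almost-geodesics (Proposition~\ref{P:almost-geodesic}) joining $z_n$ to $w_n$ that escape every compact subset of $\Omega$, contradicting the visibility of the pair $(x,y)$ --- the construction of these escaping almost-geodesics from the distance-collapsing sequence being the delicate point. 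For local visibility at $p\in\bdy\Omega$, I would choose $U_p$ with $U_p\cap\Omega$ hyperbolic and use the localization lemma in the form $K_{U_p\cap\Omega}\le K_\Omega+C$ on $V_p\cap\Omega$ to push the global visibility of pairs in $V_p\cap\bdy\Omega$ down to visibility with respect to $K_{U_p\cap\Omega}$, the additive constant being absorbed into $\kappa$.

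The main obstacle throughout is the localization lemma and, more precisely, transferring the length/infinitesimal bound of an almost-geodesic between $K_\Omega$ and $K_{U_p\cap\Omega}$: the integrated comparison $K_{U_p\cap\Omega}\le K_\Omega+C$ by itself does not control metric speed, so one genuinely needs the comparison of the Kobayashi--Royden metrics near $p$, which is exactly where hyperbolic imbeddedness is indispensable. A secondary difficulty is justifying, in the absence of a background hyperbolic Hermitian metric, that almost-geodesics escaping every compact subset of $\Omega$ truly accumulate on $\bdy\Omega$ and can be localized near a single boundary point so that local visibility becomes applicable.
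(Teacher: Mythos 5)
Your proposal has the right overall architecture for $(a)\Rightarrow(b)$ (contradiction, escaping almost-geodesics, localization of subarcs near a boundary point, transfer of the almost-geodesic property to $K_{U_p\cap\Omega}$, then local visibility), but it hinges on a localization lemma that is not justified and is almost certainly false as stated. Hyperbolic imbeddedness does \textbf{not} yield the additive comparison $K_{U_p\cap\Omega}\le K_\Omega+C$ on $V_p\cap\Omega$. What it yields, via Royden's localization lemma (Result~\ref{R:royden-localization}), is only a \emph{multiplicative} comparison: with $\delta_p:=K_\Omega(V_p\cap\Omega,\Omega\setminus U_p)>0$ one gets $k_\Omega\le k_{U_p\cap\Omega}\le A\,k_\Omega$ on $V_p\cap\Omega$, where $A=\coth\delta_p>1$. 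An additive bound would require, in effect, that near-optimal $K_\Omega$-paths between points of $V_p\cap\Omega$ neither exit $U_p$ nor linger near $\bdy U_p$; that is a visibility-type statement, not a consequence of hyperbolic imbeddedness (integrating the infinitesimal estimate of Lemma~\ref{L:kobayashi-local-strict-inequality} along a path of length $L$ that hugs the region where $K_\Omega(\cdot,\Omega\setminus U_p)\approx\delta_p$ produces an error growing linearly in $L$). Indeed, in this paper the additive-type control (Lemma~\ref{L:local-global-almost-geodesic}, following Sarkar) takes local \emph{weak visibility} as a hypothesis, so using your lemma to prove local-to-global visibility would be circular. Fortunately, for the present theorem the gap is repairable, and the repair is exactly the paper's proof: since Definition~\ref{D:visible} quantifies over \emph{all} $(\lambda,\kappa)$, the multiplicative comparison suffices. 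A $(\lambda,\kappa)$-almost-geodesic for $K_\Omega$ whose image lies in $V_p\cap\Omega$ becomes an $(A\lambda,A\kappa)$-almost-geodesic for $K_{U_p\cap\Omega}$ (the upper distance bound is obtained by integrating $k_{U_p\cap\Omega}\le A\,k_\Omega$ along the curve itself, the speed bound becomes $A\lambda$, and the lower bound transfers since $K_{U_p\cap\Omega}\ge K_\Omega$), and local visibility applies to those as well. This distinction is not cosmetic: keeping $\lambda$ fixed, which your additive lemma would permit, is precisely what is needed for the weak-visibility analogue (Theorem~\ref{T:local-global-weak-visibility}), where the paper must work much harder (additive length bounds plus reparametrization by Kobayashi arc length).

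Two further points. In $(b)\Rightarrow(a)$ your argument for hyperbolic imbeddedness is sound and essentially the paper's: if $K_\Omega(z_\nu,w_\nu)\to 0$ with $z_\nu\to x$, $w_\nu\to y$, $x\ne y$, the $(1,1/\nu)$-almost-geodesics joining them must, by visibility, meet a fixed compact $K_0$ at points $a_\nu\to a$, and the almost-geodesic inequality forces $K_\Omega(z_\nu,a_\nu),K_\Omega(a_\nu,w_\nu)\to 0$, hence $z_\nu,w_\nu\to a$, a contradiction. But for local visibility you again route through the unjustified additive lemma, and this is unnecessary: Definition~\ref{D:locally-visible} asks only for the existence of \emph{some} neighbourhood $U_p$, and $U_p=V_p=X$ works trivially, since then $K_{U_p\cap\Omega}=K_\Omega$ and the required condition is literally statement $(b)$; this is why the paper says that only hyperbolic imbeddedness needs proof in this direction. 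Finally, there is a quantifier slip in your contradiction setup for $(a)\Rightarrow(b)$: negating the visibility condition lets $(\lambda,\kappa)$ depend on the chosen pair of neighbourhoods, so ``shrinking neighbourhoods along a sequence'' does not produce a single $(\lambda,\kappa)$ together with almost-geodesics whose endpoints converge to $p$ and $q$. The correct organization, as in the paper, is to fix the neighbourhoods first (using local visibility and hyperbolic imbeddedness, independently of the parameters), then fix an arbitrary $(\lambda,\kappa)$ and run the contradiction for that fixed pair.
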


\begin{theorem}\label{T:local-global-weak-visibility}
Let $X$ be a complex manifold and let $\Omega\varsubsetneq X$ be a Kobayashi hyperbolic domain.
Then, the following conditions are equivalent:
\begin{itemize}[leftmargin=25pt]
 \item[$(a)$] $\bdy\Omega$ is locally weakly visible and $\Omega$ is a hyperbolically imbedded domain,
 \item[$(b)$] $\bdy\Omega$ is weakly visible.
\end{itemize}
\end{theorem}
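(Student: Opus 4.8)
The plan is to prove the two implications of Theorem~\ref{T:local-global-weak-visibility} separately, in close parallel with the (non-weak) Theorem~\ref{T:local-global-visibility}, the whole point being to keep the constant $\lambda=1$ intact throughout. The central tool I would first set up is a localization estimate supplied by hyperbolic imbeddedness: given $p\in\bdy\Omega$ there are neighbourhoods $V\Subset\mathcal U$ of $p$ (with $\mathcal U\cap\Omega$ Kobayashi hyperbolic) and a constant $C>0$ such that $K_\Omega(z,w)\le K_{\mathcal U\cap\Omega}(z,w)\le K_\Omega(z,w)+C$ for all $z,w\in V\cap\Omega$. The left inequality is the monotonicity of the Kobayashi distance; the right one is where hyperbolic imbeddedness (via Remark~\ref{rem:hyp-imbed}) enters, bounding the cost of the chains that leave $\mathcal U$. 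The decisive feature is that this comparison is \emph{additive}: it converts a $(1,\kappa)$-almost-geodesic into a $(1,\kappa+C)$-almost-geodesic, changing only $\kappa$ and never $\lambda$. Alongside this I would use Proposition~\ref{P:almost-geodesic} (existence of $(1,\kappa)$-almost-geodesics joining any two points) and the fact that on a Kobayashi hyperbolic domain $K_\Omega$ induces the manifold topology, so that small closed $K_\Omega$-balls contained in a relatively compact chart are relatively compact in $\Omega$.

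For the implication $(b)\Rightarrow(a)$ I would first deduce hyperbolic imbeddedness through Remark~\ref{rem:hyp-imbed}. For distinct $x,y\in\overline\Omega$ with at least one in $\Omega$, the separation $K_\Omega(V_x\cap\Omega,V_y\cap\Omega)>0$ is immediate: if $a_n\to x\in\Omega$ and $K_\Omega(a_n,b_n)\to0$ then $b_n\to x$ in $(\Omega,K_\Omega)$, hence in $X$, forcing $y=x$. For $x=p$, $y=q$ both in $\bdy\Omega$, I invoke weak visibility of $(p,q)$: a now-standard argument (as in \cite{bharalizimmer:gdwnv23}) shows that if $K_\Omega(a_n,b_n)\to0$ with $a_n\to p$, $b_n\to q$, then short $(1,\kappa)$-almost-geodesics joining $a_n,b_n$ must meet the fixed compact set afforded by weak visibility, and a normal-families/relative-compactness argument then traps $a_n$ near a point of that compact set in $\Omega$, contradicting $a_n\to\bdy\Omega$; this yields $\liminf_{a\to p,\,b\to q}K_\Omega(a,b)>0$. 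Granting hyperbolic imbeddedness, local weak visibility now follows by the \emph{forward} (local-to-global) transfer, which is clean: a $(1,\kappa)$-almost-geodesic $\sigma$ for $K_{\mathcal U\cap\Omega}$ with endpoints in $V$ is a $(1,\kappa+C)$-almost-geodesic for $K_\Omega$ (the quasi-isometry inequalities transfer additively, and the speed bound is free because $F_\Omega\le F_{\mathcal U\cap\Omega}$ for the infinitesimal Kobayashi metrics); applying global weak visibility of the relevant pair and confining the intersection to a compact subset of $\mathcal U\cap\Omega$ — using that hyperbolic imbeddedness keeps $V$ at positive $K_{\mathcal U\cap\Omega}$-distance from $\bdy\mathcal U$ — produces the required compact set.

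For $(a)\Rightarrow(b)$ I argue by contradiction. Fix distinct $p,q\in\bdy\Omega$; shrink the local-weak-visibility data at $p$ to nested neighbourhoods $U_p\Subset V_p'\Subset V_p''$ with $V_p''$ the neighbourhood in Definition~\ref{D:locally-visible}$(a)$ and $\mathcal U\supseteq V_p''$ the localizing neighbourhood, and pick $U_q$ with $\overline{U_p}\cap\overline{U_q}=\emptyset$ and $\overline{V_p''}\cap\overline{U_q}=\emptyset$. If the pair is not weakly visible, the negation yields $\kappa\ge0$ and $(1,\kappa)$-almost-geodesics $\sigma_n:[0,T_n]\lrarw\Omega$ from $U_p$ to $U_q$ that eventually avoid every compact subset of $\Omega$; hence every point of $\sigma_n$ tends to $\bdy\Omega$, and (along a subsequence) $\sigma_n(0)\to p^*\in\overline{U_p}\cap\bdy\Omega$. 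Let $t_n$ be the first exit time of $\sigma_n$ from $V_p'$ (well defined since the curve ends in $U_q$, outside $V_p''$); then $c_n:=\sigma_n(t_n)\in\bdy V_p'$ converges to some $p'\in\bdy V_p'\cap\bdy\Omega$. Since $U_p\Subset V_p'$ we get $p^*\ne p'$, with both points in $V_p''\cap\bdy\Omega$. The initial segment $\sigma_n|_{[0,t_n]}$ lies in $\overline{V_p'}\subset\mathcal U\cap\Omega$; transferring it to a $(1,\kappa')$-almost-geodesic for $K_{\mathcal U\cap\Omega}$ and invoking local weak visibility at $p$ for the pair $(p^*,p')$ forces this segment to meet a compact subset of $\mathcal U\cap\Omega\subset\Omega$, contradicting that $\sigma_n$ avoids all compact subsets of $\Omega$. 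This is the local-to-global scheme of \cite{braccigaussiernikolovthomas:lgvkd23}, adapted to the present setting.

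The main obstacle is exactly the transfer in that last step: passing from the \emph{global} $(1,\kappa)$-almost-geodesic $\sigma_n$ to a \emph{local} one for $K_{\mathcal U\cap\Omega}$ without enlarging $\lambda$ beyond $1$. The additive localization carries the quasi-isometry inequalities over with $\lambda=1$, but the infinitesimal speed bound $F_\Omega(\sigma;\dot\sigma)\le1$ does \textbf{not} directly yield $F_{\mathcal U\cap\Omega}(\sigma;\dot\sigma)\le1$, since passing to the subdomain \emph{increases} the Kobayashi--Royden metric. I expect to resolve this by reparametrizing the segment by its $K_{\mathcal U\cap\Omega}$-arclength: because the escaping curves hug $\bdy\Omega$, the ratio $F_{\mathcal U\cap\Omega}/F_\Omega$ along them tends to $1$, so the reparametrized curves are $(1+\eps_n,\kappa')$-almost-geodesics with $\eps_n\to0$, and a limiting argument is then needed to return to the $\lambda=1$ regime governed by local weak visibility. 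This delicacy — together with the fact that in a general complex manifold one cannot fall back on Euclidean balls being Kobayashi hyperbolic to construct the localizing neighbourhoods — is precisely what makes the proof more than a routine extension of the bounded-domain case, and hyperbolic imbeddedness is the hypothesis that supplies the additive localization required to circumvent it.
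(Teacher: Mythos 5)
You have identified the right difficulty (keeping $\lambda=1$ across localization), and your skeleton\,---\,argument by contradiction, localization near $p$, arc-length reparametrization\,---\,is the same as the paper's. But the two claims your transfer step rests on are not established, and one is false as stated. First, the additive localization $K_{\mathcal U\cap\Omega}(z,w)\le K_\Omega(z,w)+C$ for all $z,w\in V\cap\Omega$ does \emph{not} follow from hyperbolic imbeddedness. What hyperbolic imbeddedness gives is $\delta:=K_\Omega(V\cap\Omega,\Omega\setminus\mathcal U)>0$ and hence, via Result~\ref{R:royden-localization} (or its sharpening, Lemma~\ref{L:kobayashi-local-strict-inequality}), only the \emph{multiplicative} infinitesimal bound $k_{\mathcal U\cap\Omega}\le\big(1+Me^{-K_\Omega(\cdot,\,\Omega\setminus\mathcal U)}\big)k_\Omega\le(1+Me^{-\delta})\,k_\Omega$ on $V\cap\Omega$. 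Your heuristic of ``bounding the cost of the chains that leave $\mathcal U$'' bounds $K_\Omega$ from \emph{below} for competitors that exit $\mathcal U$; it produces no upper bound for $K_{\mathcal U\cap\Omega}$, since a near-optimal $K_\Omega$-path that stays inside $\mathcal U$ may approach $\bdy\mathcal U\cap\Omega$, where the Royden factor blows up, so its $l_{\mathcal U\cap\Omega}$-length is not controlled by its $l_\Omega$-length plus a constant. This is exactly why the paper's additive estimate (Lemma~\ref{L:local-global-almost-geodesic}, a length bound along almost-geodesics confined to $W$) carries the \emph{local weak visibility} of $(U\cap\Omega,K_{U\cap\Omega})$ as an explicit hypothesis, on top of $\delta>0$: it is a Sarkar-type theorem about visibility, not a consequence of hyperbolic imbeddedness alone.

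Second, your fix for the speed bound asserts that $k_{\mathcal U\cap\Omega}/k_\Omega\to1$ along the escaping curves ``because they hug $\bdy\Omega$''. What tends to $0$ along those curves is $d_h(\cdot,\bdy\Omega)$; this does not force $K_\Omega(\cdot,\Omega\setminus\mathcal U)\to\infty$, which is what the available bound $1+Me^{-K_\Omega(\cdot,\,\Omega\setminus\mathcal U)}$ would require. Concretely, $\D^2$ is hyperbolically imbedded in $\C^2$; with $p=(1,0)$ and $\mathcal U$ a small ball about $p$ of radius $\rho$, the points $(r,0)$, $r\to1^-$, approach $\bdy\Omega$ while staying at bounded $K_{\D^2}$-distance from $\D^2\setminus\mathcal U$, and for the vector $(0,1)$ one has $k_{\D^2}((r,0);(0,1))=1$ while $k_{\mathcal U\cap\D^2}((r,0);(0,1))\ge 1/\rho$, so the ratio stays bounded away from $1$. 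Hence your reparametrized curves are only $(1+\eps,\kappa')$-almost-geodesics with $\eps$ not tending to $0$; and even if $\eps_\nu\to0$ held, the deferred ``limiting argument'' cannot be routine: a $(1+\eps,\kappa')$-almost-geodesic on $[0,T]$ is in general only a $(1,\kappa'+\eps T)$-almost-geodesic, and here $T=T_\nu\to\infty$, so no uniform additive constant is obtained and weak visibility (which controls only $\lambda=1$, with the compact set depending on $\kappa$) cannot be invoked. The paper circumvents precisely this by using \emph{both} hypotheses of $(a)$ together to prove the additive bound on $l_{U_p\cap\Omega}$ along the restricted curves (Lemmas~\ref{L:kobayashi-local-strict-inequality} and~\ref{L:local-global-almost-geodesic}), and only then reparametrizing by $K_{U_p\cap\Omega}$-arclength, which makes the speed bound automatic and turns the length bound into the two distance inequalities with $\lambda=1$ and additive constant $\kappa_0$. (Your $(b)\Rightarrow(a)$ is fine for the hyperbolic-imbeddedness half and matches the paper's Claim~2 in the proof of Theorem~\ref{T:local-global-visibility}; but local weak visibility there is immediate by taking $U_p=X$ in Definition~\ref{D:locally-visible}, so the ``forward transfer'' you invoke\,---\,which again uses the unproved additive localization\,---\,is unnecessary.)
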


The next concept that we will examine in this paper is introduced by the following definition.

\begin{definition}\label{D:visible-point}
Let $X$ be a complex manifold and $\Omega\varsubsetneq X$ be a Kobayashi hyperbolic domain.
Let $p\in\bdy\Omega$. We say that $p$ is a \emph{visible point of $\bdy\Omega$} (resp., $p$ is
a \emph{weakly visible point of $\bdy\Omega$}) if there exists a neighbourhood $U_p$ of
$p$ in $X$ such that every pair of distinct points $q_1,q_2\in U_p\cap\bdy\Omega$ satisfies the
visibility condition (resp., the weak visibility condition) with respect to $K_\Omega$.
Let $\vbdy\Omega$ denote the set of all visible points of $\bdy\Omega$ and
$\wvbdy\Omega$ denote the set of all weakly visible points of $\bdy\Omega$. 
\end{definition}

The pair of sets defined above are important, but technical, objects that act along with more
concrete notions to give us various sufficient conditions such that, given a domain
$\Omega\varsubsetneq X$, $X$ and $\Omega$ as in the above definition, $\bdy\Omega$ is visible (resp.,
weakly visible). The principal results that rely on Definition~\ref{D:visible-point} are stated later
in this section or in Section~\ref{S:compacti-visi} and which will be proved in
Sections~\ref{S:compacti-visi} and~\ref{S:sufficint-condition}. An easy consequence of the results in
Section~\ref{S:compacti-visi} is as follows.

\begin{proposition}\label{P:visible-points-empty}
Let $X$ be a complex manifold and let $\Omega\varsubsetneq X$ be a Kobayashi hyperbolic domain.
Each of the sets $\bdy\Omega\setminus \vbdy\Omega$ and $\bdy\Omega\setminus \wvbdy\Omega$ is either not
totally disconnected or is empty. 
\end{proposition}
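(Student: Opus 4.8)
The plan is to reduce the statement to a single structural fact about the failure set and then invoke the analysis of Section~\ref{S:compacti-visi}. Write $F_{\mathrm v}:=\bdy\Omega\setminus\vbdy\Omega$ and $F_{\mathrm w}:=\bdy\Omega\setminus\wvbdy\Omega$; since the two arguments are verbatim identical once one replaces ``$(\lambda,\kappa)$-almost-geodesic'' by ``$(1,\kappa)$-almost-geodesic'', I treat a single set $F$ standing for either. The first, elementary, step is that $\vbdy\Omega$ (resp.\ $\wvbdy\Omega$) is open in $\bdy\Omega$: if $p\in\vbdy\Omega$ is witnessed by a neighbourhood $U_p$ as in Definition~\ref{D:visible-point}, then the \emph{same} $U_p$ witnesses that every $p'\in U_p\cap\bdy\Omega$ is a visible point, because every pair of distinct points of $U_p\cap\bdy\Omega$ already satisfies the visibility condition. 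Hence $U_p\cap\bdy\Omega\subseteq\vbdy\Omega$, so $\vbdy\Omega$ is open in $\bdy\Omega$ and $F$ is closed in $\bdy\Omega$, hence closed in $X$ (as $\bdy\Omega$ is closed in $X$). In particular $F$, being a closed subset of the locally compact, Hausdorff, metrizable space $X$, is itself locally compact, Hausdorff and metrizable.

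Next I argue by contradiction: suppose $F\neq\emptyset$ and $F$ is totally disconnected. A totally disconnected, locally compact Hausdorff space is zero-dimensional, i.e.\ each of its points has a neighbourhood basis consisting of sets that are compact and relatively clopen. Fix $p\in F$ and choose such a set $C$: a compact neighbourhood of $p$ in $F$ that is clopen in $F$, taken small enough to lie inside a coordinate ball about $p$. Then $C$ and $F\setminus C$ are disjoint sets, both closed in $X$ (the former compact, the latter closed in $F$ and hence in $X$), so by local compactness and normality I can fix a relatively compact open set $V\subseteq X$ with $C\subseteq V$ and $\overline V\cap(F\setminus C)=\emptyset$. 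Consequently $\overline V\cap F\subseteq C\subseteq V$, and therefore $\bdy V\cap\bdy\Omega\subseteq\bdy\Omega\setminus F=\vbdy\Omega$ (resp.\ $\wvbdy\Omega$): the topological ``fence'' $\bdy V$ meets $\bdy\Omega$ only in visible points, while $C=\overline V\cap F$ is a compact pocket of non-visible points enclosed by $V$.

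The crux is to rule out this enclosed configuration, and this is exactly where the results of Section~\ref{S:compacti-visi} are used. Concretely, I would show that there is a neighbourhood $W$ of $C$ in $X$ with $\overline W\subseteq V$ such that every pair of distinct points of $W\cap\bdy\Omega$ satisfies the visibility (resp.\ weak visibility) condition; granting this, the single neighbourhood $W$ witnesses that each point of $C$ is a visible (resp.\ weakly visible) point, contradicting $p\in C\subseteq F$. To produce such a $W$ one traps almost-geodesics: fixing $\lambda\geq1$, $\kappa\geq0$ and a pair of distinct points $q_1,q_2\in W\cap\bdy\Omega$, any $(\lambda,\kappa)$-almost-geodesic $\sigma:[0,T]\lrarw\Omega$ joining points near $q_1$ and near $q_2$ (such curves exist by Proposition~\ref{P:almost-geodesic}) either stays in a fixed compact subset of $\Omega\cap V$, or it exits $V$ and hence crosses $\bdy V$ at a point of $\bdy V\cap\Omega$. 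The portion of $\bdy V\cap\Omega$ lying a definite distance from $\bdy\Omega$ is a fixed compact subset of $\Omega$, which $\sigma$ then meets; the only way to avoid this is to cross $\bdy V$ arbitrarily near the fence set $\bdy V\cap\bdy\Omega\subseteq\vbdy\Omega$, and there the visibility estimates of Section~\ref{S:compacti-visi} force $\sigma$ to bend back into $\Omega$ before reaching $\bdy\Omega$. Combining the two cases yields, for each $(\lambda,\kappa)$, one compact subset of $\Omega$ met by every such $\sigma$, which is the required visibility of $(q_1,q_2)$.

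Thus $F$ is empty or is not totally disconnected, which is the assertion for both $\bdy\Omega\setminus\vbdy\Omega$ and $\bdy\Omega\setminus\wvbdy\Omega$. I expect the genuine difficulty to be concentrated entirely in the trapping step of the previous paragraph: converting the purely topological enclosure of $C$ by $V$ into a uniform \emph{metric} trap is delicate precisely because $(\Omega,K_\Omega)$ need not be Cauchy-complete, so one has only $(\lambda,\kappa)$-almost-geodesics rather than honest geodesics, and the bending estimates near the visible fence points must be made uniform in the parameters $(\lambda,\kappa)$ (this last uniformity being vacuous in the weak case, where $\lambda=1$ throughout). This is the content that Section~\ref{S:compacti-visi} supplies, and it is what makes the present proposition an easy consequence of those results rather than a self-contained topological exercise.
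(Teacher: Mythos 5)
The crux of your proposal is deferred rather than proved, and the missing step is precisely the hard part. Your key claim — that there is a neighbourhood $W$ of $C$ such that every pair of distinct points of $W\cap\bdy\Omega$ satisfies the (weak) visibility condition — is supported only by a trapping sketch that fails at two points. First, the dichotomy ``$\sigma$ either stays in a fixed compact subset of $\Omega\cap V$ or exits $V$'' is false: an almost-geodesic can remain inside $V$ while escaping every compact subset of $\Omega$, by sliding out towards $\bdy\Omega\cap V$ — in particular towards the pocket $C$ itself, where by hypothesis you have no visibility information at all. This is exactly the dangerous configuration, and the fence $\bdy V$ says nothing about it. The paper's Theorem~\ref{T:visible-point-visible-domain} handles it by showing that the local Hausdorff limit $S$ of the images of escaping almost-geodesics has \emph{connected} closure in $\Omc$ containing at least two points, hence cannot be contained in a totally disconnected set, hence must meet $\vbdy\Omega$; nothing in your sketch plays the role of this connectedness argument. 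Second, even in the exiting case, ``the visibility estimates of Section~\ref{S:compacti-visi} force $\sigma$ to bend back into $\Omega$'' is not something you can quote: visibility at a fence point $\xi\in\bdy V\cap\vbdy\Omega$ constrains almost-geodesics whose \emph{two endpoints} lie near two distinct boundary points close to $\xi$, not curves that merely pass near $\xi$. Converting ``passing near $\xi$'' into an applicable pair requires the entry/exit sub-segment construction together with Lemma~\ref{L:3_nbhd} (extract a segment of $\sigma$ entering a small neighbourhood of $\xi$ and exiting a larger one, pass to limits to obtain distinct boundary points $p'\neq q'$ near $\xi$, apply visibility to $(p',q')$, and contradict the fact that the segments approach $\bdy\Omega$). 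Moreover, Section~\ref{S:compacti-visi} contains no freestanding local ``estimates'' of the kind you invoke — its results are the global theorem and its corollaries — so as written your plan either re-proves Theorem~\ref{T:visible-point-visible-domain} from scratch (incompletely) or is circular.

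You have also missed that the intended route is far shorter and needs none of your localization. The hypothesis of Theorem~\ref{T:visible-point-visible-domain} is \emph{exactly} your contradiction hypothesis: if $\bdy\Omega\setminus\vbdy\Omega$ is non-empty and totally disconnected, that theorem makes $\Omega$ a visibility domain subordinate to any admissible compactification, Corollary~\ref{C:visible-visibility-domain} then gives that $\bdy\Omega$ is visible, and Definition~\ref{D:visible-point} immediately yields $\bdy\Omega=\vbdy\Omega$ — contradicting non-emptiness; the weak case is identical via Corollary~\ref{C:visible-weak-visibility-domain}. In other words, once the failure set is totally disconnected, the \emph{global} conclusion is already available as a black box, so the pocket $C$, the fence $V$, and the neighbourhood $W$ are all superfluous. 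Your preliminary observations (openness of $\vbdy\Omega$ in $\bdy\Omega$, the zero-dimensionality separation) are correct but do no work in a complete proof.
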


\subsection{Sufficient conditions for (weak) visibility}\label{SS:sufficient}
Next, we show that domains $\Omega\varsubsetneq X$ such that $\bdy\Omega$ is (weakly) visible are abundant.

\begin{definition}\label{D:visible-around-p}
Let $X$ be a complex manifold and $\Omega\varsubsetneq X$ be a Kobayashi hyperbolic domain. Let
$p\in\bdy\Omega$. We say that $\Omega$ is \emph{visible around $p$} (resp., $\Omega$ is 
\emph{weakly visible around $p$}) if there exists a holomorphic chart $(U,\varphi)$ of $X$ centered at
$p$ such that $\varphi(U)$ is bounded and $\varphi(U\cap\Omega)\varsubsetneq\Cn$ is a domain, and such
that $\bdy(\varphi(U\cap\Omega))$ is visible (resp., weakly visible). Let $\bdy_V\Omega$ denote the set
of all points $p\in\bdy\Omega$ such that $\Omega$ is visible around $p$ and $\bdy_{WV}\Omega$ denote the set of all points 
$p\in\bdy\Omega$ such that $\Omega$ is weakly visible around $p$.
\end{definition}

Superficially, Definition~\ref{D:visible-around-p} seems to be no different from
Definition~\ref{D:locally-visible}. However, the formulation of the former definition in terms of
holomorphic charts allows us to construct large families of domains $\Omega\varsubsetneq X$, $X$ 
\emph{different} from $\Cn$, having the visibility property by appealing to the many examples one knows
in $\Cn$. This point will be clearer after we state our next theorem.

\begin{theorem}\label{T:visible-around-p-totally-disconnected}
 Let $X$ be any one of the following:
\begin{itemize}[leftmargin=25pt]
  \item[$(a)$] A Kobayashi hyperbolic complex manifold,
  \item[$(b)$] A Stein manifold,
  \item[$(c)$] A holomorphic fiber bundle $\pi: X\to Y$ such that
  $Y$ is either of $(a)$ or $(b)$ and the fibers are Kobayashi hyperbolic,
  \item[$(d)$] A holomorphic covering space $\pi: X\to Y$ such that $Y$ is either of $(a)$ or $(b)$.
\end{itemize}
Let $\OM\varsubsetneq X$ be a relatively compact domain.
\begin{itemize}[leftmargin=27pt]
 \item[$(i)$] Suppose the set $\bdy\Omega\setminus\bdy_V\Omega$ is totally disconnected. Then,
 $\bdy\Omega$ is visible.
 \item[$(ii)$] Suppose the set $\bdy\Omega\setminus\bdy_{WV}\Omega$ is totally disconnected. Then,
  $\bdy\Omega$ is weakly visible.
\end{itemize}   
\end{theorem}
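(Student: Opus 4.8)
The plan is to prove $(i)$ by combining three ingredients: the hyperbolic imbedding of $\Omega$ (which is where hypotheses $(a)$--$(d)$ enter), a localization of the Kobayashi distance that lets us pass between the ``intrinsic'' chart data in Definition~\ref{D:visible-around-p} and the global distance $K_\Omega$, and finally Proposition~\ref{P:visible-points-empty} together with Theorem~\ref{T:local-global-visibility}. Part $(ii)$ will then follow by the identical argument carried out with $\lambda=1$ throughout, using $\bdy_{WV}\Omega$, the set $\wvbdy\Omega$, Theorem~\ref{T:local-global-weak-visibility}, and the weak-visibility half of Proposition~\ref{P:visible-points-empty}.

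First I would establish that, for $X$ as in $(a)$--$(d)$ and $\Omega\varsubsetneq X$ relatively compact, $\Omega$ is hyperbolically imbedded in $X$ (in particular Kobayashi hyperbolic). For $(a)$ this follows from $K_\Omega\geq K_X|_\Omega$ together with the fact that $K_X$ induces the topology on the compact set $\overline\Omega$; for $(b)$ one uses a proper holomorphic imbedding of the Stein manifold $X$ into some $\C^N$, under which $\overline\Omega$ maps to a compact set and the hyperbolic imbedding of bounded domains in $\C^N$ restricts back to $X$; cases $(c)$ and $(d)$ reduce to $(a)$/$(b)$ by pulling the relevant estimates back along the bundle, resp.\ covering, projection $\pi$ and using hyperbolicity of the fibers (resp.\ the local-isometry property of $\pi$). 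By Remark~\ref{rem:hyp-imbed}, for distinct $x,y\in\overline\Omega$ we may then choose neighbourhoods with $K_\Omega$-separated traces; this is the property I will lean on below.

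Next I would show $\bdy_V\Omega\subseteq\vbdy\Omega$. Given $p\in\bdy_V\Omega$ with chart $(U,\varphi)$ as in Definition~\ref{D:visible-around-p}, the map $\varphi$ is a biholomorphism, hence a Kobayashi isometry $K_{U\cap\Omega}(z,w)=K_{\varphi(U\cap\Omega)}(\varphi(z),\varphi(w))$ carrying $(\lambda,\kappa)$-almost-geodesics to $(\lambda,\kappa)$-almost-geodesics; since $\bdy(\varphi(U\cap\Omega))$ is visible and boundary points of $\Omega$ near $p$ correspond to boundary points of $\varphi(U\cap\Omega)$, this shows at once that $\bdy\Omega$ is locally visible at $p$ in the sense of Definition~\ref{D:locally-visible}. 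The substantive step is to upgrade this to the statement that $p$ is a \emph{visible point}, i.e.\ that nearby pairs satisfy the visibility condition with respect to $K_\Omega$ rather than $K_{U\cap\Omega}$. Here I would invoke a Royden-type localization: using that $\Omega$ is hyperbolically imbedded and relatively compact, there is a smaller neighbourhood $V\subseteq U$ of $p$ and a constant $C$ with $K_\Omega\leq K_{U\cap\Omega}\leq K_\Omega+C$ on $V\cap\Omega$. Consequently a $K_\Omega$-almost-geodesic whose image lies in $V\cap\Omega$ is, after adjusting only the additive constant $\kappa$, a $K_{U\cap\Omega}$-almost-geodesic, while any $K_\Omega$-almost-geodesic that leaves $V$ must cross a collar compactly contained in $\Omega$; in either case it meets a fixed compact set, giving the visibility condition with respect to $K_\Omega$. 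Thus $\bdy_V\Omega\subseteq\vbdy\Omega$.

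Finally I would assemble the pieces. From the previous step, $\bdy\Omega\setminus\vbdy\Omega\subseteq\bdy\Omega\setminus\bdy_V\Omega$, and the latter is totally disconnected by hypothesis; Proposition~\ref{P:visible-points-empty} then forces $\bdy\Omega\setminus\vbdy\Omega=\emptyset$, so that every boundary point is a visible point. Running the localization in the opposite direction (now $K_{U_p\cap\Omega}$-almost-geodesics confined near $p$ are $K_\Omega$-almost-geodesics) converts ``every point is a visible point'' into ``$\bdy\Omega$ is locally visible''; since $\Omega$ is hyperbolically imbedded, Theorem~\ref{T:local-global-visibility} yields that $\bdy\Omega$ is visible. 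I expect the localization estimate---and the bookkeeping showing that it transfers the almost-geodesic property with $\lambda$ unchanged and only $\kappa$ worsened, which is precisely why the same proof covers the weak case $\lambda=1$ in $(ii)$---to be the main obstacle; establishing the hyperbolic imbedding in cases $(c)$ and $(d)$ is the secondary technical point.
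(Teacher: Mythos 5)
Your proposal follows the same skeleton as the paper's proof: establish that $\Omega$ is hyperbolically imbedded (the paper outsources cases $(a)$--$(d)$ to \cite[Result~3.1, Proposition~2.4]{masanta:kcdcm23}), use the chart $\varphi$ to transfer visibility of $\bdy(\varphi(U\cap\Omega))$ into visibility with respect to $K_{U\cap\Omega}$, upgrade this to $\bdy_V\Omega\subseteq\vbdy\Omega$, and conclude from total disconnectedness. But the upgrade step, which is the heart of the matter, contains two genuine gaps. First, your dichotomy ``either the almost-geodesic stays in $V$, or it crosses a collar compactly contained in $\Omega$'' fails in its second branch: the closure of any collar $U\setminus V$ around $p$ meets $\bdy\Omega$, so an almost-geodesic can leave $V$ while hugging $\bdy\Omega$ and never meet any fixed compact subset of $\Omega$. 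This boundary-hugging possibility is precisely the difficulty that visibility arguments exist to address; the paper handles it by contradiction\,---\,assuming almost-geodesics $\gamma_\nu$ avoid an exhaustion $(K_\nu)_{\nu\geq 1}$, truncating each at its first exit from an intermediate neighbourhood, invoking Lemma~\ref{L:3_nbhd} to show the truncated pieces approach $\bdy\Omega$ uniformly while their endpoints converge to \emph{distinct} boundary points near $p$, and then applying local visibility to the truncated pieces to produce a compact set they must meet. Second, the localization you assert, $K_\Omega\leq K_{U\cap\Omega}\leq K_\Omega+C$ on $V\cap\Omega$, is not ``Royden-type'': Result~\ref{R:royden-localization} gives only the multiplicative bound $k_{U\cap\Omega}\leq\coth\big(K_\Omega(\bcdot,\Omega\setminus U)\big)k_\Omega$, hence $K_{U\cap\Omega}\leq A K_\Omega$ along curves confined to $V\cap\Omega$; an additive comparison of the strength you claim is essentially Sarkar's localization theorem, which is proved \emph{under} visibility hypotheses and cannot be presupposed here. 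Moreover, even an additive distance bound would not transfer condition $(b)$ of Definition~\ref{D:almost-geodesic} (the infinitesimal bound $k_\Omega(\sigma(t);\sigma'(t))\leq\lambda$) with $\lambda$ unchanged. For part $(i)$ this does not matter, since local visibility quantifies over all $(\lambda,\kappa)$, so the degradation $(\lambda,\kappa)\mapsto(A\lambda,A\kappa)$ is harmless; but for part $(ii)$, where $\lambda=1$ must be preserved, one needs Lemmas~\ref{L:kobayashi-local-strict-inequality} and~\ref{L:local-global-almost-geodesic} together with reparametrization by Kobayashi arc length, exactly as in the proof of Theorem~\ref{T:local-global-weak-visibility}; your proposal has no substitute for this.

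Your final assembly also has a gap. Knowing that every boundary point is a visible point does not, by Definition~\ref{D:visible-point} alone, yield that $\bdy\Omega$ is visible: visible points only control pairs of boundary points lying in a common small neighbourhood, and far-apart pairs still require an argument. Your remedy\,---\,converting visible points back into local visibility and citing Theorem~\ref{T:local-global-visibility}\,---\,is a second unproven truncation-plus-localization argument, and it again degrades $\lambda$, which breaks part $(ii)$. The paper avoids this detour: once $\bdy_V\Omega\subseteq\vbdy\Omega$ is established, $\bdy\Omega\setminus\vbdy\Omega$ is totally disconnected and Theorem~\ref{T:visible-point-visible-domain} together with Corollary~\ref{C:visible-visibility-domain} gives visibility; the proof of that theorem (local Hausdorff limits of the images of the $\gamma_\nu$, plus connectedness of the limit set) is exactly the device that handles far-apart pairs and the exceptional set. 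Since you already invoke Proposition~\ref{P:visible-points-empty}, which the paper deduces from those very results, you could simply have cited Corollary~\ref{C:visible-visibility-domain} at that point and stopped.
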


The above result can be interpreted as follows: if, around ``most'' points $p\in\bdy\Omega$, $\Omega$
(locally) looks like a visibility domain in $\C^{\dim(X)}$\,---\,and, as stated above, 
we have \textbf{many} ways of identifying visibility in $\Cn$, thanks to 
Bharali--Zimmer \cite{bharalizimmer:gdwnv17, bharalizimmer:gdwnv23}, Bharali--Maitra
\cite{bharalimaitra:awnovfoewt21}, Bracci \emph{et al.} \cite{braccinikolovthomas:vkgcdrp22}\,---\,then
$\bdy\Omega$ is visible.
\smallskip

Now, we will present a result that is a method
for obtaining new domains with visible boundaries from old domains with the former property.

\begin{theorem}\label{T:domain-totally-discoonected-visible}
Let $X$ be a complex manifold of dimension $n$ and $\Omega\varsubsetneq X$ be a Kobayashi hyperbolic
domain. Let $\bdy\Omega$ be visible. Suppose $S\varsubsetneq X$ is a closed, totally disconnected set
such that $\Omega\setminus S$ is connected, and
such that $\mathcal{H}^{2n-2}(S)=0$. Then,
$\bdy(\Omega\setminus S)$ is visible.    
\end{theorem}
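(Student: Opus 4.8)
The plan is to deduce the statement from Theorem~\ref{T:local-global-visibility}. Writing $\Omega':=\Omega\setminus S$, I will show that $\bdy\Omega'$ is locally visible and that $\Omega'$ is hyperbolically imbedded in $X$; the equivalence in Theorem~\ref{T:local-global-visibility} then gives that $\bdy\Omega'$ is visible. The two analytic inputs are a removability statement for the Kobayashi structure and a compactness argument that uses the total disconnectedness of $S$. First I would record the structural facts. Since $S$ is closed with $\mathcal H^{2n-2}(S)=0$, it has Hausdorff dimension at most $2n-2$, hence real codimension at least two; consequently $\overline{\Omega'}=\overline{\Omega}$ and $\bdy\Omega'=\bdy\Omega\cup(S\cap\Omega)$, where the new boundary points $S\cap\Omega$ lie in the \emph{open} set $\Omega$. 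The key lemma is that $S$ is removable: $K_{\Omega'}=K_{\Omega}|_{\Omega'}$, and likewise the Kobayashi--Royden metrics satisfy $k_{\Omega'}=k_{\Omega}|_{\Omega'}$. Monotonicity of the Kobayashi metric under inclusion gives ``$\ge$''; for ``$\le$'' I would take an analytic disc $f\colon\D\to\Omega$, embed it in a holomorphic family $F\colon\D\times B^N\to\Omega$ with $F(\cdot,0)=f$ that fixes the marked (division) points and is a submersion elsewhere, and then observe that, $F$ being a submersion and $\mathcal H^{2n-2}(S)=0$, the set $F^{-1}(S)$ is $\mathcal H^{2N}$-null; by a Fubini/Eilenberg argument its projection to $B^N$ is null, so generic parameters give perturbed discs into $\Omega'$ that are close to $f$ on compacta. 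Applying this along a chain realising $K_{\Omega}(x,y)+\eps$ yields the reverse inequality, and the infinitesimal statement is analogous.

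Two consequences I will use throughout: a curve is a $(\lambda,\kappa)$-almost-geodesic for $\Omega'$ if and only if it is one for $\Omega$ that happens to avoid $S$; and, because $K_{\Omega'}=K_{\Omega}$ while $K_{\Omega}$ is finite and continuous on all of $\Omega$, every point of $S\cap\Omega$ lies at finite Kobayashi distance, so almost-geodesics with both endpoints near $S\cap\Omega$ have uniformly bounded length. Moreover, since $\bdy\Omega$ is visible, Theorem~\ref{T:local-global-visibility} already tells us that $\Omega$ is hyperbolically imbedded and that $\bdy\Omega$ is locally visible; using $\overline{\Omega'}=\overline{\Omega}$, $K_{\Omega'}=K_{\Omega}$ and Remark~\ref{rem:hyp-imbed}, hyperbolic imbeddedness passes at once from $\Omega$ to $\Omega'$. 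It therefore remains only to establish local visibility of $\bdy\Omega'$ at every boundary point.

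The heart of the proof is local visibility at a puncture $x_0\in S\cap\Omega$. Here I would choose a ball $B=B(x_0,r)\Subset\Omega$ in a holomorphic chart, so that $U_{x_0}\cap\Omega=B$ is complete hyperbolic and $U_{x_0}\cap\Omega'=B\setminus S$ with $K_{B\setminus S}=K_{B}$ (removing a closed set of Hausdorff codimension $\ge 2$ keeps $B$ connected, so the removability lemma applies to $B$). The task is to show that each pair of distinct punctures $q_1,q_2\in B(x_0,r/2)\cap S$ is visible with respect to $K_{B\setminus S}$. Suppose not: then for some $\lambda,\kappa$ and every compact of an exhaustion $K_m\nearrow B\setminus S$ there is a $(\lambda,\kappa)$-almost-geodesic $\sigma_m\colon[0,T_m]\to B\setminus S$ joining small disjoint balls about $q_1,q_2$ with image disjoint from $K_m$. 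Since $q_1,q_2$ are interior to the \emph{complete} ball $B$, the lengths $T_m$ are uniformly bounded and the curves lie in a fixed compact $\widehat K\subset B$; as their images avoid $K_m$ while staying inside $\widehat K$ (hence away from $\bdy B$), every point of $\sigma_m$ lies within $1/m$ of $S$. Because $k_{\Omega}\ge c\,|\,\cdot\,|$ on $\widehat K$, the $\sigma_m$ have uniformly bounded Euclidean speed, so Arzel\`a--Ascoli yields a uniform limit $\sigma_\infty\colon[0,1]\to\widehat K$ whose image lies in the closed set $S$. But the image of an interval is connected and $S$ is totally disconnected, so $\sigma_\infty$ is constant---contradicting that its endpoints lie in the disjoint closures of the two balls. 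Pairs of type (puncture, $\bdy B$-point) are handled identically. For a genuine boundary point $p\in\bdy\Omega$ I would start from the local-visibility neighbourhood of $\bdy\Omega$ at $p$ furnished by Theorem~\ref{T:local-global-visibility}, use removability on the local domain $U_p\cap\Omega$ to transfer visibility of pairs in $V_p\cap\bdy\Omega$ from $K_{U_p\cap\Omega}$ to $K_{(U_p\cap\Omega)\setminus S}$, and absorb the finitely-distant punctures near $p$ by the same limit argument. This establishes local visibility of $\bdy\Omega'$ everywhere, and Theorem~\ref{T:local-global-visibility} completes the proof.

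The main obstacle is the simultaneous control of the two ways an almost-geodesic can leave the compacta of $\Omega'$---toward the genuine boundary $\bdy\Omega$ and toward the removed punctures $S\cap\Omega$---aggravated by the fact that $(\Omega,K_{\Omega})$ need not be Cauchy-complete, so closed Kobayashi balls need not be compact and one cannot naively extract limiting curves. My resolution localizes the puncture analysis into a ball $B\Subset\Omega$, where completeness is automatic and the limit-curve/total-disconnectedness argument applies cleanly, while all escape toward $\bdy\Omega$ is routed through the hypothesis by way of hyperbolic imbeddedness and Theorem~\ref{T:local-global-visibility}. A secondary technical point, which I expect to require care, is the removability lemma with the sharp exponent $2n-2$: it is precisely this codimension that forces $F^{-1}(S)$ to be $\mathcal H^{2N}$-null and thereby lets the perturbed discs avoid $S$ for generic parameters.
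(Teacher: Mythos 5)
Your proposal is correct in substance, but it takes a genuinely different route from the paper's. The paper does not go through the local-to-global Theorem~\ref{T:local-global-visibility} at all: it invokes the same removability statement (citing \cite[Theorem~3.2.22]{kobayashi:hcs98} rather than re-deriving it), proves that every point of $\bdy\Omega\setminus S$ is a \emph{visible point} of $\bdy(\Omega\setminus S)$ in the sense of Definition~\ref{D:visible-point}\,---\,using, as you do, that an almost-geodesic for $\Omega\setminus S$ is an almost-geodesic for $\Omega$ (the distance identity plus $k_{\Omega\setminus S}\geq k_\Omega$) together with visibility of $\bdy\Omega$\,---\,and then never analyses the punctures at all: since the set of possibly-non-visible points of $\bdy(\Omega\setminus S)$ is contained in the totally disconnected set $S$, Theorem~\ref{T:visible-point-visible-domain} and Corollary~\ref{C:visible-visibility-domain} finish the proof. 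There, total disconnectedness is exploited abstractly: the limit set of a sequence of escaping almost-geodesics is connected, hence must meet a visible point. Your proof instead verifies local visibility at \emph{every} point of $\bdy(\Omega\setminus S)$, including the punctures, where your chart-ball argument (bounded parameter length from $K_{B\setminus S}=K_B$, bounded Euclidean speed from $k_{B\setminus S}\geq k_B\geq c\,\|\bcdot\|$ on compacts, Arzel\`a--Ascoli, limit curve a connected subset of $S$, hence constant) is a correct, concrete instantiation of the same connectedness principle; hyperbolic imbeddedness of $\Omega\setminus S$ is indeed inherited from that of $\Omega$ exactly as you say. The paper's route buys economy\,---\,Theorem~\ref{T:visible-point-visible-domain} absorbs all the delicate work near $S$ and is reused elsewhere\,---\,while your route buys a stronger local conclusion (every puncture is a point of local visibility) and independence from the compactification formalism.

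Two points deserve flagging. First, your ``transfer'' step at a genuine boundary point $p\in\bdy\Omega$ is not purely formal: the compact set witnessing visibility of a pair $q_1,q_2\in V_p\cap\bdy\Omega$ with respect to $K_{U_p\cap\Omega}$ is compact in $U_p\cap\Omega$ but may meet $S$, so you must rerun the limit argument on the segment of the almost-geodesic near a puncture where these witnesses accumulate; moreover, $V_p\cap\bdy(\Omega\setminus S)$ also contains punctures, so mixed pairs and pairs of far-apart punctures must be handled as well. Your phrase ``absorb the finitely-distant punctures by the same limit argument'' does cover all of this\,---\,the segment of the curve up to its first exit from a small ball $B(z_\infty,r)\Subset U_p\cap\Omega$ around such a puncture has bounded parameter length (its endpoints lie at bounded $K_{U_p\cap\Omega}$-distance) and bounded Euclidean speed, so the Arzel\`a--Ascoli argument applies without even needing Result~\ref{R:royden-localization}\,---\,but this is where most of the real work of your approach sits, and it should be written out. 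Second, you need not prove removability by hand (the perturbation sketch is the least reliable part of your text): it is exactly \cite[Theorem~3.2.22]{kobayashi:hcs98}, which is what the paper cites; note also that your arguments only use the distance identity together with the monotonicity $k_{\Omega\setminus S}\geq k_\Omega$, so the infinitesimal identity $k_{\Omega\setminus S}=k_\Omega$, which is the harder half of your ``if and only if'' for almost-geodesics, can simply be dropped.
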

Here, $\mathcal{H}^\alpha$ denotes the $\alpha$-dimensional Hausdorff measure for $\alpha\geq 0$.
\smallskip

To state our next result, we need a definition.

\begin{definition}\label{D:jordan-curve}
Let $\Omega$ be a domain in $\C$. Let $\widehat{\C}$ be the one-point compactification
of $\C$ and $\bdy_\infty\OM$ be the
boundary of $\OM$ viewed as a domain in $\widehat\C$.
\begin{itemize}[leftmargin=25pt]
 \item[$(a)$] We say that $\bdy_\infty\Omega$ is a \emph{Jordan
 curve} if there exists a homeomorphism between $S^1$ and $\bdy_\infty\Omega$.
 \item[$(b)$] We say that $\Omega$ is a \emph{Jordan domain} if $\bdy_\infty\Omega$ is a Jordan curve.
 \end{itemize}
\end{definition}

We now present a new class of planar domains with the visibility property. To the best of my
knowledge, there is no version of the following result in the literature for \textbf{unbounded} domains. 

\begin{proposition}\label{P:simply-connected}
Let $\Omega$ be a simply connected domain in $\C$ such that $\Omega$ is a Jordan domain. Then,
$\bdy\Omega$ is visible.    
\end{proposition}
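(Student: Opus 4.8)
The plan is to reduce the statement to the (known) visibility of the unit disc $\D$ by means of the Riemann mapping theorem together with Carath\'eodory's extension theorem, transporting the visibility data across the resulting biholomorphic isometry. First I would invoke the Riemann mapping theorem to obtain a biholomorphism $f\colon\D\lrarw\Omega$; since biholomorphisms are isometries for the Kobayashi distance (and for its infinitesimal metric), $f$ is an isometry of $(\D,K_\D)$ onto $(\Omega,K_\Omega)$, and in particular it carries $(\lambda,\kappa)$-almost-geodesics of $\D$ to $(\lambda,\kappa)$-almost-geodesics of $\Omega$ and conversely. Because $\Omega$ is a Jordan domain, $\bdy_\infty\Omega\subset\widehat\C$ is a Jordan curve, and Carath\'eodory's theorem then provides a homeomorphic extension $F\colon\overline{\D}\lrarw \overline{\Omega}$ (closure taken in $\widehat\C$) with $F|_{\D}=f$ and $F$ mapping $S^1=\bdy\D$ homeomorphically onto $\bdy_\infty\Omega$.

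Next I would use the fact that $\D$ is a visibility domain: $(\D,K_\D)$ is the Poincar\'e disc, which is Gromov hyperbolic, so its quasi-geodesics are stable (the Morse lemma), and hence $\bdy\D=S^1$ is visible in the sense of Definition~\ref{D:visible}. Fix distinct $p,q\in\bdy\Omega$ and set $\xi=F^{-1}(p)$, $\eta=F^{-1}(q)\in S^1$; these are distinct since $F$ is injective. By visibility of $S^1$ there are neighbourhoods $V_\xi,V_\eta$ in $\C$ with $\overline{V_\xi}\cap\overline{V_\eta}=\emptyset$ realizing the visibility condition for $(\xi,\eta)$. Using that $F$ is a homeomorphism, I would transport these to neighbourhoods $U_p,U_q$ of $p,q$ in $\C$: choosing $U_p$ so that $U_p\cap\overline{\Omega}\subseteq F(V_\xi\cap\overline{\D})$ (and $U_q$ analogously), every $z\in U_p\cap\Omega$ satisfies $f^{-1}(z)\in V_\xi$, and likewise for $U_q$. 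Shrinking $U_p,U_q$ about the distinct finite points $p,q$ ensures $\overline{U_p}\cap\overline{U_q}=\emptyset$ while preserving these inclusions. Then, given $\lambda\geq 1$ and $\kappa\geq 0$, taking the compact set $K\varsubsetneq\Omega$ to be the $f$-image of the compact set furnished by visibility of $S^1$, any $(\lambda,\kappa)$-almost-geodesic $\sigma\colon[0,T]\lrarw\Omega$ with $\sigma(0)\in U_p$ and $\sigma(T)\in U_q$ pulls back under $f^{-1}$ to a $(\lambda,\kappa)$-almost-geodesic in $\D$ starting in $V_\xi$ and ending in $V_\eta$, which therefore meets that compact set; pushing forward, $\sigma$ meets $K$. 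This establishes the visibility condition for $(p,q)$, and hence that $\bdy\Omega$ is visible.

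The step that requires the most care is the treatment of the unbounded case, where the extra boundary point $\infty\in\bdy_\infty\Omega$ forces one to carry out the matching of neighbourhoods inside the compactification $\widehat\C$ rather than in $\C$. The crux is to convert neighbourhoods of $\xi,\eta$ in $\overline{\D}$ into neighbourhoods $U_p,U_q$ of the \emph{finite} points $p,q$ in $\C$ whose traces on $\Omega$ pull back into $V_\xi,V_\eta$, while simultaneously keeping $\overline{U_p}\cap\overline{U_q}=\emptyset$; this is exactly where the homeomorphy of $F$ on all of $\overline{\D}$ (Carath\'eodory), and not merely the biholomorphy of $f$ on $\D$, is used. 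A secondary point to verify is that the defining conditions of a $(\lambda,\kappa)$-almost-geodesic are purely metric (involving $K_\Omega$ and its infinitesimal form), so that they are preserved under the isometry $f$ and its inverse; granting this, the transport of almost-geodesics in both directions is immediate.
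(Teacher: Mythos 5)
Your proposal is correct and is essentially the same argument as the paper's: the paper likewise uses the Riemann mapping theorem to obtain a biholomorphism $f\colon\D\lrarw\Omega$, invokes a Carath\'eodory-type extension theorem (citing Bracci--Contreras--D{\'i}az-Madrigal, Theorem~4.3.3) to get a homeomorphism $\wt f\colon\overline\D\lrarw\Omc$ onto the closure of $\Omega$ in $\widehat\C$, and then transports the visibility condition of $\bdy\D$ through $f$, using that a biholomorphism preserves $(\lambda,\kappa)$-almost-geodesics, with the compact set $f(\wt K)$ playing exactly the role you describe. Your extra care about the unbounded case (matching neighbourhoods inside $\widehat\C$) and your justification of the visibility of $\bdy\D$ via Gromov hyperbolicity are consistent with, and slightly more detailed than, what the paper records.
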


Experts will be familiar with the notion of ``local Goldilocks points'' introduced in
\cite{bharalizimmer:gdwnv23} providing a sufficient condition for visibility for domains in $\Cn$. Is
there an analogue of this in the more general setting of this paper? The answer to this is ``Yes," but
since formulating this notion in a coordinate-independent way requires some work, we defer this discussion
to Section~\ref{S:sufficint-condition}.
\smallskip

\subsection{Compactifications, visibility domains, and a Wolff--Denjoy-type theorem}
\label{SS:visi-compacti-WD}
The main notions studied in this paper\,---\,i.e., visibility and weak visibility of
$\bdy\Omega$\,---\,formally differ from the notion alluded to at the top of this section and which
motivates this work: that of (weak)
visibility domains with respect to $K_{\Omega}$ for domains
$\Omega\varsubsetneq \Cn$. We refer readers to \cite[Definition~1.1]{bharalimaitra:awnovfoewt21} for
the definition of the latter notion when $\Omega$ is bounded and to
\cite[Definition~1.1]{bharalizimmer:gdwnv23} when $\Omega$ is unbounded. The question arises: \emph{are
the two notions equivalent?} When $\Omega\varsubsetneq \Cn$ is bounded, it is elementary to see that
$\bdy\Omega$ is visible (resp., weakly visible) in the
sense of Definition~\ref{D:visible} if and only if $\Omega$ is a visibility domain (resp., a weak
visibility domain) in the sense of \cite{bharalimaitra:awnovfoewt21, bharalizimmer:gdwnv23}.
However, for $\Omega\varsubsetneq \Cn$
unbounded, the definition of a (weak) visibility domain involves the end compactification of 
$\overline{\Omega}$, and its
relationship with Definition~\ref{D:visible} is
\emph{a priori} unclear. The reasons that the end compactification of $\overline{\Omega}$ forms a part of
\cite[Definition~1.1]{bharalizimmer:gdwnv23} are:
\begin{itemize}[leftmargin=25pt]
 \item[$(a)$] For many of the applications of visibility shown in the works cited above, an
 appropriate compactification of $\Omega$ is needed, when $\Omega$ is unbounded, for proofs to be
 valid\,---\,which is an issue we too must address in this paper. 
 
 \item[$(b)$] The end compactification of $\overline\Omega$ (see \cite[Section~4]{bharalizimmer:gdwnv23}
 for a definition) suggests itself as a natural choice of a compactification because it arises naturally
 in results involving the Kobayashi distance; see, for instance,
 \cite{braccigaussierzimmer:heqicdCdit21}. 
\end{itemize}
In the more general setting of this paper, we formalise the need suggested in $(a)$ above as
follows:

\begin{definition}\label{D:admi-comp}
Let $X$ be a complex manifold and let $\Omega\varsubsetneq X$ be a Kobayashi hyperbolic domain.
We say that $\Omc$ is an \emph{admissible compactification of $\,\overline\Omega$} if $\Omc$ is a
Hausdorff, sequentially compact topological space that admits a map $\emb:\overline\Omega\lrarw\Omc$
that is a homeomorphism onto its image, such that $\emb(\overline\Omega)$ is open and dense in $\Omc$,
and such that $\Omc\setminus \emb(\overline{\Omega})$ is totally disconnected.    
\end{definition}

In the present setting, no preferred compactification suggests itself. Hence the need for the above
definition. In fact, for $\Omega\varsubsetneq \C$, it turns out to be more natural to take $\Omc$ to be
the closure of $\Omega$ in $\widehat{\C}$: the one-point compactification of $\C$. This will be seen in
the results of a forthcoming work (and is hinted at by the proof of Proposition~\ref{P:simply-connected}).
\smallskip

We can now propose a definition for a visibility domain. In the definition below, we will commit the
following abuse of notation: for any point $x\in\overline\Omega$, we shall write $\emb(x)$ simply as $x$.

\begin{definition}\label{D:visibility-domain}
Let $X$ be a complex manifold, $\Omega\varsubsetneq X$ be a domain (not necessarily relatively
compact), and let $\Omc$ be
some admissible compactification of $\overline\Omega$. We say that $\Omega$ is a \emph{visibility
domain subordinate to
$\Omc$} if $\Omega$ is Kobayashi hyperbolic and has the following property:
\begin{itemize}
 \item[$(*)$] Given $\lambda\geq 1$, $\kappa\geq 0$, distinct points $p,q\in\bdy_\infty\Omega$, and
 $\Omc$-open neighbourhoods
 $V_p$ of $p$ and $V_q$ of $q$ whose closures (in $\Omc$) are disjoint, there exists a compact set
 $K\subset\Omega$ such that the image of each $(\lambda,\kappa)$-almost-geodesic 
 $\sigma:[0,T]\lrarw\Omega$ with
 $\sigma(0)\in\emb^{-1} (V_p\setminus\bdy_\infty\Omega)$
 and $\sigma(T)\in\emb^{-1}(V_q\setminus\bdy_\infty\Omega)$ intersects $K$.
\end{itemize}
We say that $\Omega$ is a \emph{weak visibility domain subordinate to
$\Omc$} if the property $(*)$ holds true \textbf{only} for $\lambda=1$.
Here, $\emb$ is the embedding of $\overline\Omega\hookrightarrow\Omc$ that defines $\Omc$
and $\bdy_\infty\Omega := \Omc\setminus \emb(\Omega)$.
\end{definition}

The question raised above is answered as follows:

\begin{theorem}[see Corollaries~\ref{C:visible-visibility-domain},
\ref{C:visible-weak-visibility-domain}]\label{T:equivalence}
Let $X$ be a complex manifold and $\Omega\varsubsetneq X$ a Kobayashi hyperbolic domain.
Then, $\bdy\Omega$ is visible (resp., weakly visible) if and only if $\Omega$ is a visibility domain
(resp., weak visibility domain) subordinate to any admissible compactification of $\overline\Omega$.
\end{theorem}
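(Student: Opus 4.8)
The plan is to prove each direction by unwinding the two definitions and showing that the only real difference between them is topological bookkeeping on the disjointness of neighbourhoods. Fix an admissible compactification $\emb:\overline\Omega\hookrightarrow\Omc$ and recall that $\bdy_\infty\Omega=\Omc\setminus\emb(\Omega)$, which contains both $\emb(\bdy\Omega)$ and the ``ideal'' points $\Omc\setminus\emb(\overline\Omega)$. The key observation is that the visibility condition of Definition~\ref{D:visible} speaks about pairs $p,q\in\bdy\Omega$ separated by neighbourhoods in $X$, whereas Definition~\ref{D:visibility-domain} speaks about pairs $p,q\in\bdy_\infty\Omega$ separated by neighbourhoods in $\Omc$. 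So the first step is to match up these two notions of ``separating neighbourhood'' using the homeomorphism $\emb$ together with the openness and density of $\emb(\overline\Omega)$ in $\Omc$.

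First I would prove the ``if'' direction: assume $\Omega$ is a visibility domain subordinate to $\Omc$ and deduce that $\bdy\Omega$ is visible. Take distinct $p,q\in\bdy\Omega$. Then $\emb(p),\emb(q)$ are distinct points of $\bdy_\infty\Omega$, and since $\Omc$ is Hausdorff I can choose $\Omc$-open neighbourhoods $V_{\emb(p)},V_{\emb(q)}$ with disjoint closures. Pulling these back through $\emb$ (which is a homeomorphism onto its open image) and intersecting with the chart neighbourhoods in $X$, I obtain $X$-open sets $U_p,U_q$ with $\overline{U_p}\cap\overline{U_q}=\emptyset$; here I must be slightly careful that closures in $X$ match closures in $\emb(\overline\Omega)$, which works because $\emb$ is a homeomorphism onto its image and I can shrink $U_p,U_q$ so that $\overline{U_p},\overline{U_q}$ (in $X$) map into $V_{\emb(p)},V_{\emb(q)}$. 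An almost-geodesic with endpoints in $U_p\cap\Omega$ and $U_q\cap\Omega$ then has endpoints in $\emb^{-1}(V_{\emb(p)}\setminus\bdy_\infty\Omega)$ and $\emb^{-1}(V_{\emb(q)}\setminus\bdy_\infty\Omega)$, so property $(*)$ supplies the required compact set $K$. This gives the visibility condition for $(p,q)$, and since $p,q$ were arbitrary, $\bdy\Omega$ is visible. The weak case is identical with $\lambda=1$ throughout.

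For the converse, assume $\bdy\Omega$ is visible; I want property $(*)$. The subtlety is that $(*)$ quantifies over \emph{all} distinct $p,q\in\bdy_\infty\Omega$, including ideal points in $\Omc\setminus\emb(\overline\Omega)$, whereas visibility only gives me information about pairs of genuine boundary points. The plan is to handle three cases according to whether $p,q$ lie in $\emb(\bdy\Omega)$ or in the totally disconnected ideal set $\Omc\setminus\emb(\overline\Omega)$. When both are images of boundary points, I run the previous argument in reverse: given $\Omc$-neighbourhoods $V_p,V_q$ with disjoint closures, I produce $X$-neighbourhoods separating the two boundary points and invoke the visibility condition to extract $K$; I then check that any almost-geodesic with endpoints in $\emb^{-1}(V_p\setminus\bdy_\infty\Omega)$, $\emb^{-1}(V_q\setminus\bdy_\infty\Omega)$ indeed has endpoints in the corresponding $U_p\cap\Omega,U_q\cap\Omega$, so $K$ works.

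The hard part, and the step I expect to be the main obstacle, is the case where one or both of $p,q$ is an ideal point lying in $\Omc\setminus\emb(\overline\Omega)$, since there is no boundary point of $\Omega$ in $X$ to feed into the hypothesis. Here the resolution must come from the structure built into Definition~\ref{D:admi-comp}: the set $\Omc\setminus\emb(\overline\Omega)$ is totally disconnected, and I expect the argument to reduce such a point to a limit of (or to be ``shielded'' by) genuine boundary points. Concretely, I would argue by contradiction: if $(*)$ failed for such a pair, there would be a sequence of $(\lambda,\kappa)$-almost-geodesics $\sigma_k$ with endpoints converging (in $\Omc$) to $p$ and $q$ but eventually leaving every compact $K\varsubsetneq\Omega$; one then extracts limits of the endpoint sequences inside $\overline\Omega$ using sequential compactness of $\Omc$ and the fact that the endpoints lie in $\emb(\Omega)$, forcing cluster values in $\emb(\bdy\Omega)$ to which the already-established boundary case (or a suitable limiting/openness argument, passing to $X$-neighbourhoods of those cluster points) can be applied to derive the contradiction. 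I would assemble these cases, note that the weak version follows by restricting to $\lambda=1$ verbatim, and conclude; the citation to Corollaries~\ref{C:visible-visibility-domain} and~\ref{C:visible-weak-visibility-domain} indicates the author likely packages this limiting argument through the intermediate machinery of Section~\ref{S:compacti-visi}, which is where I would expect the totally-disconnectedness of the ideal set to do its essential work.
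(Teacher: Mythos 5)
Your ``if'' direction (visibility domain $\Rightarrow$ $\bdy\Omega$ visible) is essentially the paper's argument for that implication and is fine, up to one small repair: Hausdorffness of $\Omc$ alone does not let you pick $\Omc$-neighbourhoods of $\emb(p),\emb(q)$ with disjoint \emph{closures}; one should instead start with $X$-neighbourhoods $U_p,U_q$ of $p,q$ having disjoint compact closures and push them forward, noting that then $\overline{V_p}^\infty\subseteq\emb(\overline{U_p}\cap\overline\Omega)$, and similarly for $q$. The genuine gaps are in the converse. First, your ``both points in $\emb(\bdy\Omega)$'' case does not work as described: Definition~\ref{D:visible} only provides \emph{some} pair of separating neighbourhoods $U_p,U_q$ for the pair $(p,q)$, whereas property $(*)$ of Definition~\ref{D:visibility-domain} demands a compact set for \emph{every} pair of $\Omc$-open sets $V_p,V_q$ with disjoint closures; such $V_p,V_q$ can be vastly larger than (the images of) $U_p,U_q$, so your claimed check that every almost-geodesic with endpoints in $\emb^{-1}(V_p\setminus\bdy_\infty\Omega)$ and $\emb^{-1}(V_q\setminus\bdy_\infty\Omega)$ ``indeed has endpoints in $U_p\cap\Omega,U_q\cap\Omega$'' is false in general. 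Bridging this quantifier mismatch is exactly as hard as the ideal-point case, which is why the paper does not case-split at all.

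Second, in the ideal-point case your proposed mechanism fails: the endpoints of the failing almost-geodesics cluster in $\Omc$ only inside $\overline{V_p}^\infty$ and $\overline{V_q}^\infty$\,---\,in the situation you set up they converge to the ideal points $p,q$ themselves\,---\,so sequential compactness of $\Omc$ cannot ``force cluster values in $\emb(\bdy\Omega)$'' out of the endpoint sequences. The genuine boundary points must be extracted from the \emph{whole images} of the almost-geodesics, and this is the paper's key idea (Theorem~\ref{T:visible-point-visible-domain}), which is absent from your proposal: pass to a local Hausdorff limit $S\subseteq\bdy\Omega$ of the images $\langle\gamma_\nu\rangle$ (they accumulate only on $\bdy\Omega$ because they avoid an exhaustion of $\Omega$ by compacts), prove that the closure $\overline{S}^\infty$ in $\Omc$ is \emph{connected} with at least two points, and then use total disconnectedness of $\Omc\setminus\emb(\overline\Omega)$ to conclude that $S$ contains a point $\xi\in\bdy\Omega$ (automatically a visible point here, since $\bdy\Omega$ is visible). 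One then truncates each $\gamma_\nu$ near $\xi$, uses Lemma~\ref{L:3_nbhd} to produce two distinct boundary points $\xi\ne\eta$ in a small neighbourhood of $\xi$, and applies the visibility condition for that pair to force the truncated geodesics into a fixed compact set, contradicting that they avoid every $K_\nu$. The connectedness-of-the-limit-set argument played against the totally disconnected ideal set is thus the essential missing ingredient; the axioms of an admissible compactification do their work there, not in a limiting argument on endpoints.
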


We will not overtly focus on the property
in Definition~\ref{D:visibility-domain}.
In this paper, the latter property:
\begin{itemize}
 \item is used to give a unified discussion\,---\,irrespective of whether or not $\Omega\varsubsetneq X$
 is relatively compact\,---\,of the connection between visibility of $\bdy\Omega$ and the local
 Goldilocks property that was alluded to above, and
 \item plays an absolutely vital role in establishing a Wolff--Denjoy-type theorem in the very general
 setting of this paper. 
\end{itemize}

One of the goals of this paper is to show how ubiquitous the Wolff--Denjoy phenomenon is. The
Wolff--Denjoy theorem (see Result~\ref{R:WD} below) has been generalised in many ways in the literature
but, till recently, these
have featured convex domains. Since the history of
generalisations of the Wolff--Denjoy theorem is extensive, and their contrast with our Wolff--Denjoy-type
theorem (see Theorem~\ref{T:wolff-denjoy}) is significant, we shall introduce and discuss
it in Section~\ref{S:application}.

\subsection{Connections with Gromov hyperbolicity}\label{SS:Gromov-connection}
We will assume that the reader is familiar with Gromov hyperbolic metric
spaces. Now, length metric spaces that are not geodesic spaces (and hence, by the Hopf--Rinow Theorem, are
not Cauchy-complete) are less frequently encountered in geometry. But, as we have emphasised above,
we do not assume in this paper that $(\Omega, K_{\Omega})$\,---\,for $\Omega$ as above\,---\,is
Cauchy-complete. The definition of Gromov hyperbolicity of $(\Omega, K_{\Omega})$ in the latter setting
might be useful. We refer the reader to the definition in Section~\ref{SS:Gromov}. Every Gromov
hyperbolic metric space $(X,d)$ has an abstract boundary called the \emph{Gromov boundary}, denoted by
$\bdy_G X$. We define \emph{Gromov bordification} to be the \textbf{set} $\overline X^G:=X\cup\bdy_G X$
equipped with a topology with respect to which the inclusion $X\hookrightarrow \overline X^G$ is a homeomorphism.
This is called the \emph{Gromov topology}, which we will discuss in Section~\ref{SS:Gromov}.
Here, we introduce the notation ``$\xrightarrow{G} \xi$'' to denote the approach to $\xi \in\bdy_G X$
with respect to the Gromov topology.
\smallskip

When a length metric space $(X, d)$ is Cauchy-complete and Gromov hyperbolic, then it is
well known that
$\bdy_G X$ is visible to geodesic lines (see
\cite[Part~III, Lemma~3.2]{bridsonhaefliger:msnpc99}). This property of $\bdy_G X$ is key to
a range of applications of Gromov hyperbolicity. 
We now specialise to the metric space $(\Omega, K_{\Omega})$, $\Omega$ is as in the theorems above.
This, in view of
Proposition~\ref{P:almost-geodesic}, raises the natural question: \emph{when $(\Omega, K_{\Omega})$ is
\textbf{not} Cauchy-complete, then is $\bdy_G\Omega$ visible to almost-geodesics?} An answer to this
question would, among other things, give us necessary conditions for when $\overline{\Omega}$ embeds into 
$\overline\Omega^G$ (which would open up many applications in the present setting). Firstly, we must
clarify what is meant by the above question. To this end, we need a definition.

\begin{definition}\label{D:gromov-boundary-visible}
Let $X$ be a complex manifold and $\Omega\varsubsetneq X$ be a Kobayashi hyperbolic domain (that is not
necessarily Kobayashi complete) such that $(\Omega,K_\Omega)$ is Gromov hyperbolic. We say that
$\bdy_G\Omega$ is \emph{visible to almost-geodesics} if for any $\xi, \eta\in\bdy_G\Omega$,
$\xi\neq \eta$, given any $\kappa\geq 0$ and sequences $(x_\nu)_{\nu\geq 1}\subset\Omega$ and
$(y_\nu)_{\nu\geq 1}\subset\Omega$ such that $x_\nu\xrightarrow{G} \xi$, and $y_\nu\xrightarrow{G} \eta$,
there exists a compact set $K\subset \Omega$ such that for each $\nu$, the image of each 
$(1,\kappa)$-almost-geodesic joining $x_\nu$ and $y_\nu$ intersects $K$.    
\end{definition}

It turns out that the answer to the above question is in the affirmative in many cases.

\begin{theorem}\label{T:gromov-boundary-visible}
Let $X$ be a complex manifold and $\Omega\varsubsetneq X$ be a Kobayashi hyperbolic domain such that
$(\Omega,K_\Omega)$ is Gromov hyperbolic. Suppose $\overline\Omega^G$ is locally compact. Then,
$\bdy_G\Omega$ is visible to almost-geodesics.  
\end{theorem}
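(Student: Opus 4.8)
The plan is to fix a basepoint $o\in\Omega$, let $\delta\ge 0$ be a hyperbolicity constant for $(\Omega,K_\Omega)$ in the four-point sense, and write $(x\mid y)_o:=\tfrac12\big(K_\Omega(o,x)+K_\Omega(o,y)-K_\Omega(x,y)\big)$ for the Gromov product. The proof then splits into three steps: (i) the hypothesis $\xi\neq\eta$ bounds the Gromov products of the endpoints uniformly; (ii) this forces every $(1,\kappa)$-almost-geodesic joining $x_\nu$ and $y_\nu$ to pass within a uniform $K_\Omega$-distance of $o$; and (iii) local compactness of $\overline{\Omega}^G$ turns the resulting closed ball into the required compact set. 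Only the last step uses the standing hypothesis, and it is there that the real work lies.

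For (i), recall that the Gromov product extends to $\bdy_G\Omega$ and that $(\xi\mid\eta)_o<\infty$ precisely because $\xi\neq\eta$. Since $x_\nu\xrightarrow{G}\xi$ and $y_\nu\xrightarrow{G}\eta$ we have $(x_\nu\mid\xi)_o\to\infty$ and $(y_\nu\mid\eta)_o\to\infty$. Applying the four-point inequality twice along $\xi,x_\nu,y_\nu,\eta$ gives $(\xi\mid\eta)_o\ge\min\{(\xi\mid x_\nu)_o,(x_\nu\mid y_\nu)_o,(y_\nu\mid\eta)_o\}-2\delta$; as the first and third terms tend to $\infty$, the minimum is eventually $(x_\nu\mid y_\nu)_o$, so $(x_\nu\mid y_\nu)_o\le(\xi\mid\eta)_o+2\delta$ for all large $\nu$. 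Enlarging the bound to absorb the finitely many remaining indices, I obtain $M:=\sup_\nu(x_\nu\mid y_\nu)_o<\infty$.

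For (ii), let $\sigma:[0,T]\to\Omega$ be a $(1,\kappa)$-almost-geodesic with $\sigma(0)=x_\nu$, $\sigma(T)=y_\nu$. The defining inequalities make $t\mapsto K_\Omega(x_\nu,\sigma(t))$ run from $0$ to $K_\Omega(x_\nu,y_\nu)$ up to an additive $\kappa$, so by the intermediate value theorem I may choose $m=\sigma(t^\ast)$ with $K_\Omega(x_\nu,m)=(o\mid y_\nu)_{x_\nu}$. The almost-geodesic inequalities then force $(x_\nu\mid y_\nu)_m=O(\kappa)$, and a standard $\delta$-hyperbolicity (tripod) estimate bounds $K_\Omega(o,m)$ by $(x_\nu\mid y_\nu)_o+2(x_\nu\mid y_\nu)_m+2\delta$. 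Hence $K_\Omega(o,m)\le M+C(\delta,\kappa)=:R$, i.e. $\sigma$ meets the closed ball $\overline{B}:=\{z\in\Omega:K_\Omega(o,z)\le R\}$. I emphasise that this step is a pure Gromov-product computation: it replaces the usual Morse/stability lemma and so needs neither genuine geodesics nor Cauchy-completeness.

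For (iii), I would first note that no $K_\Omega$-bounded set can approach $\bdy_G\Omega$: for every $\xi\in\bdy_G\Omega$ and $z\in\Omega$ one has $(z\mid\xi)_o\le K_\Omega(o,z)$, so the basic Gromov neighbourhoods $\{(\,\cdot\mid\xi)_o>r\}$ with $r>R$ miss $\overline{B}$ entirely. Consequently the closure of $\overline{B}$ in $\overline{\Omega}^G$ is disjoint from $\bdy_G\Omega$, hence contained in $\Omega$. I would then argue that such a closed subset of the locally compact Hausdorff space $\overline{\Omega}^G$, being trapped uniformly away from the Gromov boundary, is compact; taking $K$ to be this closure and combining with (ii) yields a single compact set met by every $(1,\kappa)$-almost-geodesic joining $x_\nu$ and $y_\nu$, for all $\nu$. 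The main obstacle is exactly this upgrade, which amounts to a properness statement for $(\Omega,K_\Omega)$ on bounded sets. The difficulty is that a $K_\Omega$-bounded sequence escaping every compact subset of $\Omega$ is not a Gromov sequence and so has no limit on $\bdy_G\Omega$; ruling such sequences out therefore cannot be done by soft topology alone and must exploit the quantitative link $(z\mid\xi)_o\le K_\Omega(o,z)$ between the Kobayashi metric and the Gromov topology near the boundary. Making this precise is the technical heart of the argument.
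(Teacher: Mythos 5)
Your steps (i) and (ii) are correct: the uniform bound $M$ on the Gromov products of the endpoints is standard, and your choice of $m=\sigma(t^\ast)$ with $K_\Omega(x_\nu,m)=(o\mid y_\nu)_{x_\nu}$ does give, via the four-point condition, $K_\Omega(o,m)\leq (x_\nu\mid y_\nu)_o+2\delta+O(\kappa)$; this computation is in fact a clean, more elementary substitute for the paper's appeal to the Rips condition (Result~\ref{R:Gromov-hyp-rips-condition}). The genuine gap is step (iii), and it is not merely ``the technical heart'' left to be made precise\,---\,the statement you reduce to is \emph{false} under the hypotheses of the theorem. A closed subset of a locally compact Hausdorff space need not be compact, and the properness claim (``closed $K_\Omega$-bounded sets are compact'') fails in exactly the examples this theorem is designed to cover. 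Indeed, Proposition~\ref{P:gromov-visible-example} supplies counterexamples: let $D\varsubsetneq\Cn$, $n\geq 2$, be a bounded strongly pseudoconvex domain, $w\in D$, and $\Omega:=D\setminus\{w\}$. Then $K_\Omega=K_D|_{\Omega\times\Omega}$, $(\Omega,K_\Omega)$ is Gromov hyperbolic, and $\overline\Omega^G$ is homeomorphic to $\overline D\setminus\{w\}$, hence locally compact, so all hypotheses of Theorem~\ref{T:gromov-boundary-visible} hold. Yet any sequence $z_j\to w$ is $K_\Omega$-bounded (its distances to $o$ tend to $K_D(o,w)<\infty$), escapes every compact subset of $\Omega$, and has no convergent subsequence in $\overline\Omega^G$. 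Consequently the closed ball $\overline B=\{z\in\Omega:K_\Omega(o,z)\leq R\}$ with $R>K_D(o,w)$ is a closed subset of $\overline\Omega^G$ disjoint from $\bdy_G\Omega$ that is \textbf{not} compact. Your points $m_\nu$ may drift into precisely such an incompleteness locus, and then no compact set containing them exists; so the argument cannot be completed along these lines.

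The paper uses local compactness in a different place: at the boundary point $\xi$, not at the basepoint $o$. One argues by contradiction with an exhaustion $(K_\nu)$ of $\Omega$ and offending almost-geodesics $\gamma_\nu$ satisfying $\langle\gamma_\nu\rangle\cap K_\nu=\emptyset$. Local compactness yields a neighbourhood $V_\xi$ of $\xi$ with $\overline{V_\xi}^G$ compact; one truncates $\gamma_\nu$ to its initial segment $\wt\gamma_\nu$ contained in $V_\xi$, terminating at a point $y'_\nu$ bounded away from $\xi$. Since every point now in play lies in the single compact set $\overline{V_\xi}^G$ and avoids $K_\nu$, subsequential limits are forced into $\bdy_G\Omega$: first this produces $\eta'\in\bdy_G\Omega$ with $\eta'\neq\xi$ and $y'_\nu\xrightarrow{G}\eta'$, whence $(x_\nu\mid y'_\nu)_o^\Omega\leq M$; then the slim-triangle estimate (or your step (ii), applied to the pair $x_\nu$, $y'_\nu$) gives points $z_\nu\in\langle\wt\gamma_\nu\rangle$ with $K_\Omega(o,z_\nu)$ bounded; finally, since $(z_\nu)\subset V_\xi$ avoids the exhaustion, it subconverges in $\overline{V_\xi}^G$ to a point of $\bdy_G\Omega$, hence is a Gromov sequence, hence $K_\Omega(o,z_\nu)\to\infty$\,---\,a contradiction. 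The decisive difference from your plan is that all sequences are trapped inside one compact neighbourhood of $\xi$, so ``escapes every compact of $\Omega$'' genuinely implies ``subconverges to the Gromov boundary''; for your ball around $o$, that implication is exactly what fails.
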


The question now arises: \emph{how natural is the assumption in Theorem~\ref{T:gromov-boundary-visible} that
$\overline\Omega^G$ be locally compact
when $(\Omega,K_\Omega)$ is incomplete?} The final result of this section gives a construction of a
large collection of domains that satisfy the conditions in Theorem~\ref{T:gromov-boundary-visible}.

\begin{proposition}\label{P:gromov-visible-example}
Let $D\varsubsetneq\Cn$ be a Kobayashi hyperbolic domain such that 
\begin{itemize}[leftmargin=25pt]
  \item[$(a)$]$(D,K_D)$ is Gromov hyperbolic,
  \item[$(b)$]$id_D$ extends to a homeomorphism between $\overline D$ and $\overline D^G$.
\end{itemize}
Let $K\subset D$ be a compact set such that $\mathcal{H}^{2n-2}(K)=0$. Write
$\Omega:=D\setminus K$. Then,
\begin{itemize}[leftmargin=27pt]
  \item[$(i)$]$\Omega$ is Kobayashi hyperbolic and $(\Omega,K_\Omega)$ is Gromov hyperbolic, 
  \item[$(ii)$]$\overline\Omega^G$ is homeomorphic to $\overline D\setminus K$.
\end{itemize}
In particular $\overline\Omega^G$ is non-compact and locally compact.
\end{proposition}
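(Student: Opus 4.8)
The plan is to base everything on a single metric identity and then transport the Gromov-boundary data of $D$ onto $\Omega$. The engine is the removability of $K$ for the Kobayashi distance: since $K$ is compact with $\mathcal{H}^{2n-2}(K)=0$, I would first invoke the fact (a standard removable-singularity property for the Kobayashi distance, which is also the metric input behind Theorem~\ref{T:domain-totally-discoonected-visible}) that $K_\Omega = K_D|_{\Omega\times\Omega}$. Two elementary consequences of $\mathcal{H}^{2n-2}(K)=0$ should be recorded alongside this: $K$ has empty interior (since $\mathcal{H}^{2n}(K)=0$), so $\Omega$ is $K_D$-dense in $D$; and $K$ does not locally separate $\Cn$ (since $\mathcal{H}^{2n-1}(K)=0$), so $\Omega=D\setminus K$ is connected, hence a domain. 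Kobayashi hyperbolicity of $\Omega$ is then immediate, as $K_\Omega$ is the restriction of the distance $K_D$.

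For the Gromov hyperbolicity in $(i)$ I would use that, for possibly non-geodesic spaces, Gromov hyperbolicity is the four-point (Gromov-product) inequality of Section~\ref{SS:Gromov}. This is a condition quantified over all quadruples of points with a single fixed constant $\delta$, and is therefore inherited by every subspace carrying the restricted metric. Since $(\Omega,K_\Omega)$ is precisely the isometric subspace $(\Omega, K_D|_\Omega)$ of the $\delta$-hyperbolic space $(D,K_D)$, it is $\delta$-hyperbolic with the same $\delta$. This completes $(i)$.

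The substance of the proof is $(ii)$. I would first show that the canonical map $\bdy_G\Omega\to\bdy_G D$, sending the class of a Gromov sequence in $\Omega$ to its class in $D$, is a bijection. Injectivity is immediate because equivalence of Gromov sequences is phrased solely through Gromov products, which agree for $\Omega$ and $D$. Surjectivity uses density: given a Gromov sequence $(y_\nu)\subset D$ representing $\eta\in\bdy_G D$, I would choose $x_\nu\in\Omega$ with $K_D(x_\nu,y_\nu)\to 0$; since a Gromov product changes by at most $K_D(x_i,y_i)+K_D(x_j,y_j)$ under such a perturbation, $(x_\nu)$ is a Gromov sequence equivalent to $(y_\nu)$, hence lies in $\Omega$ and represents $\eta$. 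Next I would promote this to a homeomorphism of bordifications: define $\Phi:\overline\Omega^G\to\overline D^G$ to be the identity on $\Omega$ and the above boundary identification on $\bdy_G\Omega$, so that its image is exactly $\Omega\cup\bdy_G D=\overline D^G\setminus K$. The key observation is that a basic Gromov-topology neighbourhood of a boundary point $\xi$ in $\overline D^G$, once intersected with $\overline D^G\setminus K$, is cut out purely by lower bounds on Gromov products of $D$-points and boundary points; restricting to $\Omega$-points and using $K_\Omega=K_D|_\Omega$ together with $\bdy_G\Omega\cong\bdy_G D$, this intersection is precisely a basic Gromov-topology neighbourhood of $\xi$ in $\overline\Omega^G$. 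Hence $\Phi$ is a homeomorphism onto the subspace $\overline D^G\setminus K$. Finally, hypothesis $(b)$ provides a homeomorphism $h:\overline D^G\to\overline D$ that is the identity on $D$; since $K\subset D$ we have $h(K)=K$, so $h$ maps $\overline D^G\setminus K$ homeomorphically onto $\overline D\setminus K$, and $h\circ\Phi$ is the desired homeomorphism $\overline\Omega^G\cong\overline D\setminus K$.

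I expect the homeomorphism step, namely matching the subspace Gromov topology of $\overline D^G\setminus K$ with the intrinsic Gromov topology of $\overline\Omega^G$, to be the main obstacle, since it requires unwinding the definition of the Gromov topology from Section~\ref{SS:Gromov} and verifying that no neighbourhood data is lost when the $K_D$-removable set $K$ is deleted. The concluding assertions are then formal. Local compactness: $\overline D\setminus K$ is open in $\overline D$, and $\overline D$ is closed in $\Cn$, hence locally compact, so every open subset of it (in particular $\overline\Omega^G$) is locally compact. Non-compactness: a compact subspace of the Hausdorff space $\overline D$ would be closed there, yet $\overline D\setminus K$ is open, proper and dense (as $K\neq\emptyset$ has empty interior, so the closure of $\overline D\setminus K$ is all of $\overline D$), which is a contradiction; thus $\overline\Omega^G$ is non-compact.
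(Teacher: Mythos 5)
Your proposal is correct and follows essentially the same route as the paper: the removability identity $K_\Omega = K_D|_{\Omega\times\Omega}$ (Kobayashi's theorem for compact sets of vanishing $\mathcal{H}^{2n-2}$-measure), inheritance of the four-point Gromov condition by isometric subspaces, and identification of $\overline\Omega^G$ with $\overline D^G\setminus K$ first as sets of Gromov-sequence classes and then as topological spaces, finishing via hypothesis $(b)$. The only micro-difference is your surjectivity argument for $\bdy_G\Omega\to\bdy_G D$ (density of $\Omega$ plus the perturbation estimate for Gromov products), where the paper instead uses that any Gromov sequence of $D$ has $K_D$-distance to the basepoint tending to infinity, hence eventually avoids the $K_D$-bounded compact $K$ and its tail already lies in $\Omega$; both arguments are valid.
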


The conditions $(a)$ and $(b)$ are observed rather frequently in this field. For instance, these
conditions are satisfied by bounded strongly pseudoconvex domains \cite{baloghbonk:GhKmspd00} and
by $\smoo^\infty$-smoothly bounded convex domains of finite type \cite{zimmer:GhKmcdft16}.
\medskip

\section{Metrical preliminaries and almost-geodesics}\label{S:metrical}
In this section, we elaborate upon certain metrical statements made in Section~\ref{S:intro} for which the
associated definitions and explanations had been deferred. 

\subsection{$\boldsymbol{(\lambda, \kappa)}$-almost-geodesics}\label{SS:AG}
We present a couple of definitions.

\begin{definition}\label{D:abs-cont}
Let $X$ be a complex manifold of dimension $n$. A path $\sigma: I\lrarw X$, where $I\subseteq \R$ is an
interval, is said to be \emph{locally absolutely 
continuous} if for each $t_0\in I$ and each holomorphic chart $(U,\varphi)$ around $\sigma(t_0)$, with
$I(\varphi,t_0)$
denoting any closed and bounded interval in $\sigma^{-1}(U)$ containing $t_0$,
$\varphi\circ\sigma{\mid}_{I(\varphi,t_0)}$ is
absolutely continuous as a path in $\R^{2n}$.
The path $\sigma: I\lrarw X$ is said to be
\emph{absolutely continuous} if $I\varsubsetneq \R$ is a closed and bounded interval and $\sigma$ is
locally absolutely continuous.
\end{definition}

\begin{remark}\label{rem:abs-cont}
From Definition~\ref{D:abs-cont} and the fact that every {locally} absolutely continuous 
path in $\R^{2n}$ (i.e., a path whose restriction to each closed and bounded interval is 
absolutely continuous) is almost-everywhere differentiable, it follows that the same holds true for the
paths defined in Definition~\ref{D:abs-cont}. If $\sigma$ is such a path and $t\in I$ is such that
$D\sigma(t)$ exists, there is a canonical identification of the vector $D\sigma(t)1\in T_{\sigma(t)}X$ with a
vector in $T{^{(1,0)}_{\sigma(t)}}X$. In this paper, we shall denote the latter by $\sigma'(t)$.
If $I\varsubsetneq \R$ is a closed and bounded interval, then $\sigma: I\lrarw \R^{2n}$ is
absolutely continuous if and only if it is locally absolutely continuous; this is the motivation for our
definition of absolute continuity of $\sigma$ in Definition~\ref{D:abs-cont}.
\end{remark}

\begin{definition}\label{D:almost-geodesic}
Let $X$ be a complex manifold of dimension $n$ and let $\Omega\subseteq X$ be a Kobayashi hyperbolic
domain. Let
$I\subseteq \R$ be an interval. For
$\lambda\geq 1$ and $\kappa\geq 0$, a curve $\sigma: I\lrarw \Omega$ is said to be a 
\emph{$(\lambda, \kappa)$-almost-geodesic} if 
\begin{itemize}[leftmargin=25pt]
 \item[$(a)$] for all $s,t\in I$
 \[
   \frac{1}{\lambda}|s-t|-\kappa\leq K_\Omega (\sigma(s),\sigma(t))\leq \lambda|s-t|+\kappa,
 \]
 \item[$(b)$] $\sigma$ is locally absolutely continuous (whence $\sigma'(t)$ exists for almost every $t\in I$),
 and for almost every $t\in I$,
 $k_\Omega(\sigma(t);\sigma'(t))\leq\lambda$ (here $k_\Omega$ denotes the Kobayashi pseudometric on $\Omega$).
\end{itemize}
\end{definition}

\subsection{General properties of Kobayashi hyperbolic domains}\label{SS:property-kobayashi}
With $X$ and $\Omega$ as above and $\sigma: [a,b]\lrarw\Omega$ an absolutely continuous path in
$\Omega$, we
introduce \emph{the length of $\sigma$ relative to the Kobayashi pseudometric}, denoted by
$l_\Omega{(\sigma})$. For
this, we first observe that the Kobayashi pseudometric $k_\Omega$ is an upper semicontinuous function on
$T^{(1,0)}\Omega:=T^{(1,0)}X{\mid}_{\Omega}$. Hence, from
Remark~\ref{rem:abs-cont}, $k_\Omega(\sigma(t);\sigma'(t))$ is integrable. Thus, the integral below
makes sense and we define
\[
  l_\Omega(\sigma):=\int_{a}^{b} k_\Omega(\sigma(t);\sigma'(t))\,dt.
\]

For a Kobayashi hyperbolic domain $\Omega\subseteq X$, the Kobayashi pseudometric $k_\Omega$ has the
following properties:

\begin{result}\label{R:kob-metr-prop}
Let $X$ be a complex manifold of dimension $n$ and $\Omega\subseteq X$ be a Kobayashi hyperbolic domain.
\begin{itemize}[leftmargin=25pt]
 \item[$(a)$] \cite[Theorem~1]{royden:rkm71} For any $z, w\in\Omega$ we have
 \[
   \qquad\quad K_\Omega(z,w) = \inf\{l_\Omega(\sigma) : \sigma: [a,b]\lrarw\Omega 
   \text{ is piecewise $\smoo^1$ with $ \sigma(a)=z$ and $\sigma(b)=w$}\}.
 \]
 \item[$(b)$] \cite[Theorem~3.1]{venturini:pprcm89} For any $z, w$ we have
 \begin{multline*}
 \qquad\qquad K_\Omega(z,w) =\inf\{l_\Omega(\sigma): \sigma: [a,b]\lrarw\Omega \\
 \text{is absolutely continuous with $\sigma(a)=z$ and $\sigma(b)=w$}\}.
 \end{multline*}
\end{itemize}
\end{result}

\begin{result}[paraphrasing {\cite[Theorem~2]{royden:rkm71}}]\label{R:kob-hyp-cond}
Let $X$ be a complex manifold of dimension $n$ and $\Omega\subseteq X$ be a
domain. Fix a Hermitian metric $h$ on $X$ and let $d_h$ denote the distance induced by $h$.
Then, $\Omega$ is Kobayashi hyperbolic if and only if for each $x\in\Omega$, there exist constants
$c_{x}>0$, $r_{x}>0$
such that $k_\Omega(y;v)\geq c_{x}h_y(v)$ for every
$v\in T{^{(1,0)}_{y}} \Omega$ and every $y\in B_{d_h}(x,r_x)$.  
\end{result}

\begin{remark}\label{rem:open-d-h-ball-notation}
In the above statement, $B_{d_h}(x,r)$ denotes the open $d_h$-ball centered at $x$ and having radius
$r$. In the above,
and elsewhere in this paper, we abbreviate $\sqrt{h|_x(v,v)}$, $v\in T{^{(1,0)}_{x}}X$, as $h_{x}(v)$.
The above result implies that when $\Omega$ is Kobayashi hyperbolic, the Kobayashi pseudometric
$k_\Omega$ is a (Finsler) metric.  
\end{remark}

Now, we will prove that in the metric space $(\Omega, K_\Omega)$ (where $\Omega$ is a Kobayashi hyperbolic domain in a
complex manifold $X$),
any two points in $\Omega$ can be joined by a $(1,\kappa)$-almost-geodesic. To prove this result, we
need a technical
lemma.

\begin{lemma}\label{L:to-prove-almost-geodesic}
Let $X$ be a complex manifold and $\Omega\subseteq X$ be a Kobayashi hyperbolic domain. Let
$\sigma: [a,b]\lrarw \Omega$ be an absolutely continuous path. If 
\[
  l_\Omega(\sigma)\leq K_\Omega(\sigma(a),\sigma(b))+\varepsilon,
\]
for some $\varepsilon>0$, then, whenever $a\leq a'\leq b'\leq b$, we have
\[
  l_\Omega(\sigma{\mid}_{[a',b']})\leq K_\Omega(\sigma(a'),\sigma(b'))+\varepsilon.
\]
\end{lemma}

In view of part $(b)$ of Result~\ref{R:kob-metr-prop}, the above lemma follows from the definition of
$l_\Omega(\sigma)$
and the triangle inequality for the Kobayashi distance on $\Omega$.

\begin{proposition}\label{P:almost-geodesic}
Let $X$ be a complex manifold and $\Omega\subseteq X$ be a Kobayashi hyperbolic domain in $X$. For any
$\kappa>0$ and $x, y\in\Omega$, there exists a $(1,\kappa)$-almost-geodesic $\sigma: [a,b]\lrarw
\Omega$ with $\sigma(a)=x$
and $\sigma(b)=y$.   
\end{proposition}

\begin{proof}
Fix a Hermitian metric $h$ on $X$ and let $d_h$ be the distance induced by $h$ on $X$. It is a 
classical result that
\[
  Top(\Omega)=Top(d_h{\mid}_{\Omega\times\Omega}),
\]
where $Top(\Omega)$ denotes the manifold topology on $\Omega$. Now, from part
$(a)$ of Result~\ref{R:kob-metr-prop}, there exists a piecewise $\smoo^1$ curve $\gamma:
[0,T]\lrarw\Omega$ such that $\gamma(0)=x$,
$\gamma(T)=y$, and
\begin{align}\label{E:L_Omega_gamma}
 l_\Omega(\gamma)<K_\Omega(x,y)+\kappa.
\end{align}
Since $\gamma([0,T])$ is compact, it can be covered by finitely many charts on $\Omega$.
Also the Kobayashi metric
$k_\Omega$ is upper semicontinuous on $T^{(1,0)}\Omega$. Therefore, perturbing $\gamma$ if needed, 
we can assume that
$\gamma$ is $\smoo^1$ smooth and that $h_{\gamma(t)}({\gamma}'(t))\neq 0$ for all $t\in [0,T]$. Now, we define
the function $f: [0,T]\lrarw \R$ by
\[
  f(t)=\int_{0}^{t}k_\Omega(\gamma(s);\gamma'(s))\,ds.
\]
Since $k_\Omega$ is upper semicontinuous on $T^{(1,0)}\Omega$, there exists a constant $C_1>0$ such
that 
\begin{equation}\label{E:right-constant-kobayashi-metric}
  k_\Omega(\gamma(t);\gamma'(t))\leq C_1
  h_{\gamma(t)}(\gamma'(t)) \quad \forall t\in [0,T].
\end{equation}
Since $\Omega$ is Kobayashi hyperbolic, from Result~\ref{R:kob-hyp-cond} it follows that for each 
$t\in [0,T]$, there exist constants
$r_t>0$, $\wt{C}_{t}>0$ such that for all $y\in B_{d_h}{(\gamma(t),r_t)}$ and for all
$v\in T{^{(1,0)}_{y}} \Omega$
\[
  k_\Omega(y;v)\geq \wt{C}_{t} h_y (v).
\]
Since $\gamma([0,T])$ is compact, there exist $t_1,...,t_N\in[0,T]$ such that 
$B_{d_h}(\gamma(t_1),r_{t_1}),...,B_{d_h}
(\gamma(t_N),r_{t_N})$ cover $\gamma([0,T])$. Therefore, there exists a constant
$C_2>0$ such that, taking \eqref{E:right-constant-kobayashi-metric} into consideration, we have
\begin{align*}
 {C_2}h_{\gamma(t)}(\gamma'(t))\leq k_\Omega(\gamma(t);\gamma'(t))\leq C_1 h_{\gamma(t)}(\gamma'(t)),   
\end{align*}
for all $t\in [0,T]$.
By assumption, $h_{\gamma(t)}({\gamma}'(t))\neq 0$ for all $t\in [0,T]$. As $h$ is continuous,
there exists a constant $\alpha>1$ such that
\begin{align*}
 (1/\alpha)\leq C_2 h_{\gamma(t)}(\gamma'(t))&\leq k_\Omega(\gamma(t);\gamma'(t))\leq C_1 h_{\gamma(t)}
 (\gamma'(t))\leq \alpha \notag \\ \implies (1/\alpha)|t_1-t_2|&\leq|f(t_1)-f(t_2)|\leq\alpha |t_1-t_2|,    
\end{align*}
for all $t_1,t_2\in [0,T]$. Hence, $f$ is bi-Lipschitz and strictly
increasing function. Therefore, we can define the function 
$g=f^{-1}: [0,l_\Omega(\gamma)]\lrarw[0,T]$.
\medskip

\noindent{{\textbf{Claim.}}}
\emph{$\wt{\gamma}:=\gamma\circ g$ is a $(1,\kappa)$-almost-geodesic joining $x$ and $y$.}
\smallskip

\noindent{\emph{Proof of Claim.}
Since $f$ is bi-Lipschitz, $g$ is bi-Lipschitz. Now, fix $t_0\in [0,l_\Omega(\gamma)]$, fix a
holomorphic chart $(U,\varphi)$ around
$\wt{\gamma}(t_0)$, and let $I(\varphi,t_0)$ be a closed interval of $\wt{\gamma}^{-1}
(U)$ containing $t_0$. So, $g(I(\varphi,t_0))$ is a closed interval. Since $\varphi$ is smooth, and
$\gamma$ is $\smoo^1$ smooth on $[0,T]$, $\varphi\circ{\gamma}{\mid}_{g(I(\varphi,t_0))}$ is 
Lipschitz. Hence, $\varphi\circ\wt{\gamma}{\mid}_{I(\varphi,t_0)}$ is Lipschitz.
Therefore, $\varphi\circ\wt{\gamma}{\mid}_{I(\varphi,t_0)}$ is an
absolutely continuous path. So, by Definition~\ref{D:abs-cont}, $\wt{\gamma}$ is absolutely continuous.
\smallskip 

Since $k_\Omega(\gamma(s);\gamma'(s))$ is upper semi-continuous (and hence measurable) and bounded on $[0,T]$, it is integrable on $[0,T]$. By Lebesgue
differentiation
theorem, there exists a set $E\subseteq[0,T]$ of full measure such that $f$ is differentiable and 
$f'(s)=k_\Omega(\gamma(s);\gamma'(s))$ for each $s\in E$.
Since $g$ is bi-Lipschitz, $g^{-1}(E)\subseteq[0,l_\Omega(\gamma)]$ has full measure, hence on $g^{-1}(E)$, $g$ is
differentiable, and for all $s\in g^{-1}(E)$:
\[
  g'(s)= 1/f'(g(s)).
\]
At this stage we are reduced to examining functions 
defined on intervals in $\R$. No charts of $X$ are now involved. Thus, in view of
Lemma~\ref{L:to-prove-almost-geodesic}, the proof that $\wt{\gamma}$ has properties $(a)$ and $(b)$
given in Definition~\ref{D:abs-cont} now proceeds exactly as in the proof of 
\cite[Proposition~4.4]{bharalizimmer:gdwnv17}. \hfill $\btl$} 
\smallskip

Since the above claim is true, the result follows.
\end{proof}

We conclude this section with a technical lemma. Its proof is trivial, despite its
technical hypothesis, and we state it here because we shall invoke it several times in the later
sections.

\begin{lemma}\label{L:3_nbhd}
Let $X$ be a complex manifold and $\Omega\varsubsetneq X$ be a Kobayashi hyperbolic domain in $X$. Let
$h$ be a Hermitian
metric on $X$ and let $(K_{\nu})_{\nu\geq 1}$ be an exhaustion by compacts of $\Omega$. Fix
$\lambda\geq 1$,
$\kappa\geq 0$, and $p\in \bdy\Omega$. Let $U$ and $W$ be neighbourhoods of $p$ in $X$ such that $U$
is relatively
compact and $W\Subset U$. Let $(x_{\nu})_{\nu\geq 1}\subset W\cap \Omega$ and, for each $\nu$, let
$\gamma_{\nu}:
[a_{\nu}, b_{\nu}]\lrarw \Omega$ be a $(\lambda, \kappa)$-almost-geodesic such that 
$\gamma_{\nu}(a_{\nu}) = x_{\nu}$ and
such that
\[
  \gamma_{\nu}([a_{\nu}, b_{\nu}]) \subset \Omega\setminus K_{\nu} \quad\text{and}
  \quad \gamma_{\nu}(b_{\nu})\notin U.
\]
Given a neighbourhood $V$ of $p$ in $X$ such that $W\Subset V\Subset U$, let 
$r_{\nu}\in (a_{\nu}, b_{\nu})$ be such that
\[
  y'_{\nu} := \gamma_{\nu}(r_{\nu})\in V\setminus \overline{W} \quad\text{and}
  \quad \gamma_{\nu}([a_{\nu}, r_{\nu}]) \subset U \quad \forall \nu\geq 1.
\]
Write $\wt{\gamma}_{\nu} := \gamma_{\nu}|_{[a_{\nu}, r_{\nu}]}$. Then, there exist a point
$p'\in \overline{W}\cap\bdy\Omega$ and a subsequence $(\wt{\gamma}_{\nu_k})_{k\geq 1}$
such that $x_{\nu_k}\to p'$ and
\[
  \lim_{k\to\infty}\sup_{z\in{\sf image}(\wt\gamma_{\nu_k})}d_h(z,\bdy\Omega)=0.
\]
\end{lemma}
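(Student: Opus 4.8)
The plan is to establish the convergence $\sup_{z\in{\sf image}(\wt\gamma_\nu)}d_h(z,\bdy\Omega)\to 0$ first, for the \emph{entire} sequence $(\wt\gamma_\nu)_{\nu\geq 1}$, since this is the one substantive assertion; the existence of a subsequence with $x_{\nu_k}\to p'\in\overline W\cap\bdy\Omega$ will then drop out essentially for free. The two structural facts I will lean on are that $\overline U$ is compact (as $U$ is relatively compact) and that $(K_\nu)_{\nu\geq 1}$ is an increasing exhaustion by compacts of $\Omega$, so that every compact subset of $\Omega$ lies in $K_M$ for some $M$, with $K_M\subseteq K_\nu$ once $\nu\geq M$. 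I will also use that the function $z\mapsto d_h(z,\bdy\Omega)$ is continuous on $X$ and vanishes precisely on the closed set $\bdy\Omega$.

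For the main claim, fix $\eps>0$ and set
\[
  L_\eps:=\{z\in\overline U\cap\overline\Omega: d_h(z,\bdy\Omega)\geq\eps\}.
\]
This set is closed in $X$ (intersection of the closed set $\{d_h(\cdot,\bdy\Omega)\geq\eps\}$ with the closed sets $\overline U$ and $\overline\Omega$) and is contained in the compact set $\overline U$, hence is compact; moreover any $z\in L_\eps$ satisfies $d_h(z,\bdy\Omega)>0$, so $z\notin\bdy\Omega$ and therefore $z\in\overline\Omega\setminus\bdy\Omega=\Omega$. Thus $L_\eps$ is a compact subset of $\Omega$, so $L_\eps\subseteq K_{M}$ for some $M=M(\eps)$. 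Now recall that ${\sf image}(\wt\gamma_\nu)=\gamma_\nu([a_\nu,r_\nu])\subseteq U\cap(\Omega\setminus K_\nu)$ by hypothesis. For $\nu\geq M$ we have $K_M\subseteq K_\nu$, so no point of ${\sf image}(\wt\gamma_\nu)$ can lie in $L_\eps\subseteq K_M$; since every such point does lie in $\overline U\cap\Omega$, it must then satisfy $d_h(z,\bdy\Omega)<\eps$. Hence $\sup_{z\in{\sf image}(\wt\gamma_\nu)}d_h(z,\bdy\Omega)\leq\eps$ for all $\nu\geq M(\eps)$, and as $\eps>0$ was arbitrary the claimed limit follows for the full sequence.

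Finally, since $(x_\nu)_{\nu\geq 1}\subset W$ and $\overline W$ is compact (because $W\Subset U$), I will extract a subsequence with $x_{\nu_k}\to p'\in\overline W$. Because $x_\nu=\wt\gamma_\nu(a_\nu)\in{\sf image}(\wt\gamma_\nu)$, the limit just proved gives $d_h(x_\nu,\bdy\Omega)\to 0$, whence by continuity $d_h(p',\bdy\Omega)=0$ and so $p'\in\bdy\Omega$; thus $p'\in\overline W\cap\bdy\Omega$, as required, and the boundary-distance statement holds along this (indeed any) subsequence by the previous paragraph. I do not anticipate a genuine obstacle here, consistent with the remark that the proof is trivial: the only point demanding a little care is the verification that $L_\eps$ is a compact subset of $\Omega$, which is precisely where the relative compactness of $U$ and the exhaustion property are used together.
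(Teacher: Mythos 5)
Your proof is correct, and it is precisely the argument the paper has in mind: the paper omits the proof, remarking only that the lemma follows from the condition $\gamma_{\nu}([a_{\nu}, b_{\nu}]) \subset \Omega\setminus K_{\nu}$ together with the relative compactness of $U$, which are exactly the two facts your argument combines (via the compact sets $L_\eps \subseteq \Omega$ being absorbed by the exhaustion, plus the standard subsequence extraction in $\overline{W}$). Indeed you prove slightly more than stated, namely that the boundary-distance supremum tends to $0$ along the full sequence, not just along a subsequence.
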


\begin{remark}
In the above statement, $d_h$ denotes the distance induced by $h$ on $X$. The existence of a
neighbourhood $V$
and numbers $r_{\nu}\in (a_{\nu}, b_{\nu})$, $\nu\geq 1$, with the properties stated in the hypothesis
above is
guaranteed by the condition $\gamma_{\nu}(b_{\nu})\notin U $ for each $\nu\geq 1$.
\end{remark}

The proof of Lemma~\ref{L:3_nbhd} is a consequence of the condition
$\gamma_{\nu}([a_{\nu}, b_{\nu}]) \subset \Omega\setminus K_{\nu}$ for all $\nu\geq 1$ and the fact that
$U$ is relatively compact, and is trivial. Thus, we shall skip the proof.
\smallskip

\subsection{Gromov hyperbolic metric spaces}\label{SS:Gromov}
In Section~\ref{SS:Gromov-connection} we mentioned Gromov hyperbolic metric spaces. Let us begin with the
definition of Gromov hyperbolicity of a metric space $(X, d)$ that does not require $(X,d)$ to be a
geodesic space. But first, for $x,y,o\in X$, we define the \emph{Gromov product} $(x|y)_o$
by $(x|y)_o :=(d(x,o)+d(y,o)-d(x,y))/2$.

\begin{definition}\label{D:Gromov-hyperbolic}
Let $(X,d)$ be a metric space. We say that
$(X, d)$ is \emph{Gromov hyperbolic} if there exists $\delta\geq 0$ such that for any four
points $x, y, z, o\in X$,
\begin{align}\label{E:gromov-hyperbolic}
 \min\big\{(x|y)_o ,(y|z)_o \big\} \leq (x|z)_o+\delta.
\end{align}
\end{definition}

Here, we will present some
more notations and a useful result to prove Theorem~\ref{T:gromov-boundary-visible}. These are taken
from \cite[Section~3.4]{dassimmonsurbanski:gdghms17}.

\begin{definition}\label{D:gromov-product}
 Let $(X,d)$ be a metric space.
\begin{itemize}[leftmargin=25pt]
  \item[$(a)$]We say that a sequence $(x_\nu)_{\nu\geq 1}\subset X$ is a \emph{Gromov sequence} if for
  some (hence any) $o\in X$,
  \[
    \lim_{\nu,l\to\infty}(x_l|x_\nu)_o=+\infty.
  \]
  \item[$(b)$]Let $(X,d)$ be Gromov hyperbolic. We say that two Gromov sequences $(x_\nu)_{\nu\geq 1}$
  and $(y_\nu)_{\nu\geq 1}$ are \emph{equivalent} if for some (and hence any) $o\in X$,
  \[
    \lim_{\nu,l\to\infty}(x_l|y_\nu)_o=+\infty.
  \]
\end{itemize}
If $(X,d)$ is Gromov hyperbolic, it follows that the above relation (between two sequences of $X$)
is an equivalence relation.
\end{definition}

We shall denote the equivalence class of Gromov sequences $(x_\nu)_{\nu\geq 1}$ by $[x_\nu]$. We define the 
\emph{Gromov boundary} of $X$ to be the set of all equivalence classes of Gromov sequences in $X$. We
shall denote the Gromov boundary of $X$ by $\bdy_G X$. Recall that the \emph{Gromov bordification} of $X$,
denoted by $\overline X^G$, as a \textbf{set} is $\overline X^G := X\cup \bdy_G X$. Note that $\overline X^G$ is not compact when $X$ is not complete. We will not
elaborate upon the construction of the topology on $\overline X^G$; we refer the reader to 
\cite[Section~3.4]{dassimmonsurbanski:gdghms17} for
details about Gromov topology. Instead, we present the following result which is relevant for our paper
and also gives some idea about the Gromov topology.  

\begin{result}\label{R:gromov-topology-property}
 Let $(X,d)$ be a Gromov hyperbolic metric space. Then,
\begin{itemize}[leftmargin=25pt]
  \item[$(a)$] The Gromov topology depends only on $d$ and the inclusion $X\hookrightarrow \overline X^G$
  ($X$ being equipped with the metric topology) is homeomorphic.
  \item[$(b)$]A sequence $(x_\nu)_{\nu\geq 1}\subset X$ converges to a point $\xi \in\bdy_G X$ if and
  only if $(x_\nu)_{\nu\geq 1}$ is a Gromov sequence and $[x_\nu]= \xi$. 
\end{itemize}
\end{result}

\smallskip

\section{The proofs of Theorems~\ref{T:local-global-visibility} and~\ref{T:local-global-weak-visibility}  }\label{S:proof-local-golbal-visibility}
To prove the above-mentioned theorems, we will need the following result:

\begin{result}[paraphrasing {\cite[Lemma~2 of \S4]{royden:rkm71}}]\label{R:royden-localization}
Let $X$ be a complex manifold and $\Omega\subseteq X$ be a Kobayashi hyperbolic domain. Let
$U$
be an open subset of $X$ such that $U\cap\Omega\neq\emptyset$ with $U\cap\Omega$ connected. Then, for every 
$v\in T^{(1,0)}_{x}
(U\cap\Omega)$ and every $x\in U\cap\Omega$, we have
\[
  k_\Omega(x;v)\leq k_{U\cap\Omega}(x;v)\leq \coth{(K_\Omega(x,\Omega\setminus U))}k_\Omega(x;v).
\]
\end{result}

In what follows, if $\sigma:I\lrarw\Omega$, $I\subseteq\R$ an interval, is a curve, then we will denote the image of
$\sigma$ by $\langle\sigma\rangle$.
In the proof that follows, we shall use
the notation introduced in Section~\ref{S:metrical}.

\begin{proof}[The proof of Theorem~\ref{T:local-global-visibility}]
Fix a Hermitian metric $h$ on $X$ and let $d_h$ be the distance induced by $h$ on $X$.  First we will
prove that $(a)$
implies $(b)$. In order to prove that
$\bdy\Omega$ is visible, fix $p,q\in\bdy\Omega$ such that $p\neq q$. Since
$\bdy\Omega$ is locally visible, there exists a neighbourhood $U_p$ of $p$ in $X$ such that
$U_p\cap\Omega$
is a Kobayashi hyperbolic domain and there is a neighbourhood $V_p$ of $p$ in $X$, $V_p\subseteq U_p$,
such that every pair of distinct points
$q_1,q_2\in V_p\cap\bdy\Omega$ satisfies the visibility condition with respect to $K_{U_p\cap\Omega}$.
We also fix $\lambda\geq 1$ and $\kappa\geq 0$. Since $\Omega$ is a
hyperbolically imbedded domain in $X$, shrinking $V_p$ if needed, we may assume without loss of
generality that $V_p$ is relatively compact and that
$K_\Omega(V_p\cap \Omega, \Omega\setminus U_p)=:\delta_p>0$ and that $q\notin\overline{V_p}$.
Therefore, there exists a
neighbourhood $V_q$ of $q$ in $X$ such that $V_q$ is relatively compact, and such that
$\overline{V_p}\cap\overline{V_q}=\emptyset$.  Let $W_p$ be a neighbourhood of $p$ in $X$ such that
$\overline{W_p}\subset V_p$.
For any pair $x,y\in\Omega$ we define the set
\[
  AG_{\Omega}(x,y):=\{\gamma:[0,T]\lrarw\Omega:
  \text{$\gamma$ is a $(\lambda,\kappa)$-almost-geodesic joining $x$ and $y$}\}.
\]
Here, the parameters $\lambda$ and $\kappa$ are those that we fixed above (we do not assume that
$AG_\Omega(x,y)\neq\emptyset$  when $\kappa=0$).
\medskip

\noindent{{\textbf{Claim~1.}}}
\emph{There exists a compact set $K\subset \Omega$ such that for 
every $x\in W_p\cap\Omega$, every
$y\in V_q\cap\Omega$, and every $\gamma\in AG_\Omega(x,y)$, we have $\langle\gamma\rangle\cap K\neq\emptyset$.}
\smallskip

\noindent{\emph{Proof of Claim~1.}
If possible let the claim be false. Let $(K_\nu)_{\nu\geq 1}$ be an exhaustion by
compacts of
$\Omega$. Then, there exist sequences $(x_\nu)_{\nu\geq 1}\subset W_p\cap\Omega$,
$(y_\nu)_{\nu\geq 1}\subset V_q\cap\Omega$, and $(\lambda,\kappa)$-almost-geodesics $\gamma_\nu$ in 
$AG_\Omega(x_\nu,y_\nu)$, for each $\nu\geq 1$, such that $\langle\gamma_\nu\rangle\cap
K_\nu=\emptyset$.
Fix neighbourhoods $W_{p}^1$, $W_{p}^2$ of $p$ such that $W_p\Subset W_p^1\Subset W_p^2\Subset V_p.$
Let $r_\nu>0$ and ${y'_\nu}=\gamma_\nu(r_\nu)$ be such that ${y'_\nu}\in
(W_p^2\setminus\overline{W_p^1})\cap\Omega$, and
such that the image of $\wt{\gamma}_{\nu}:=\gamma_\nu{\mid}_{[0,r_\nu]}$ is contained in $W_p^2$.
By Lemma~\ref{L:3_nbhd}, and as we can relabel the subsequences given by it, there exists a point
$p'\in \overline{W_p}\cap\bdy\Omega$
such that $x_{\nu}\to p'$ and
\begin{align}\label{E:d_h}
 \lim_{\nu\to\infty}\sup_{z\in\langle\wt\gamma_\nu\rangle}d_h(z,\bdy\Omega)=0.
\end{align}
From Result~\ref{R:royden-localization} and the fact that $K_\Omega(z,\Omega\setminus U_p)
\geq \delta_p$ for all $z\in
V_p\cap\Omega$, it follows that for every $v\in T{^{(1,0)}_{z}}
\Omega$ and every $z\in V_p\cap\Omega$
\begin{align*}
 k_\Omega(z;v)\leq k_{U_p\cap\Omega}(z;v)&\leq \coth(K_\Omega(z,\Omega\setminus U_p))k_\Omega(z;v)
 \leq Ak_\Omega(z;v),  
\end{align*}
where $A:=\coth\delta_p (>1)$. Furthermore,
\begin{align*}
 K_\Omega(z,w)\leq K_{U_p\cap\Omega}(z,w)   
\end{align*}
for all $z,w\in V_p\cap\Omega$. Since $\langle\wt{\gamma}_{\nu}\rangle\subset V_p\cap\Omega$ and
$\gamma_\nu$ is a $(\lambda,\kappa)$-almost-geodesic for all
$\nu\geq 1$ and as $A>1$, it follows from the last few inequalities that for all $t,s\in[0,r_\nu]$
\begin{itemize}[leftmargin=27pt]
 \item[$(i)$]
 $k_{U_p\cap\Omega}(\wt{\gamma}_{\nu}(t);{\wt{\gamma}_\nu}'(t))\leq A\lambda$ for a.e. $t\in [0, r_{\nu}]$, 
 \item[$(ii)$]
 ${(A\lambda)^{-1}}|t-s|-A\kappa\leq K_{U_p\cap\Omega}(\wt{\gamma}_{\nu}(t),\wt{\gamma}_{\nu}(s))\leq
 A\lambda|t-s|\leq A\lambda|t-s|+A\kappa$.
\end{itemize}
The second inequality relies on $(i)$ and part $(b)$ of Result~\ref{R:kob-metr-prop}.
Hence, $\wt{\gamma}_{\nu}$ is a $(A\lambda,A\kappa)$-almost-geodesic with respect to
$K_{U_p\cap\Omega}$ joining $x_\nu$ to $y'_\nu$. As $W_p^2$ is relatively compact, there exists a
sequence $(\nu_j)_{j\geq 1}\subset \N$ and a point $q'\in\overline{W_p^2}\cap\bdy\Omega\subseteq
V_p\cap\bdy\Omega$ such that
$\gamma_{\nu_j}(r_{\nu_j})\to q'$. By construction, $p'\neq q'$. Now, since $V_p\cap\bdy\Omega$
satisfies the visibility
condition with respect to
$K_{U_p\cap\Omega}$, there exists a compact set $K\subset U_p\cap\Omega$ such that for every $j\geq
1$ we can find
$z_j\in\langle\wt\gamma_{\nu_j}\rangle$ with $z_j\in K$, which contradicts \eqref{E:d_h}. Hence, the
claim.
\hfill $\btl$}
\smallskip

By this claim, we have established that $\bdy\Omega$ is visible.
\smallskip

Now to prove $(b)$ implies $(a)$, we only need to prove that $\Omega$ is a hyperbolically imbedded
domain. We shall
establish the equivalent property stated in Remark~\ref{rem:hyp-imbed}. Let
$x,y\in\overline\Omega$ such that $x\neq y$. When at least one of $x,y$ is in $\Omega$, this property
follows easily from
the fact that $\Omega$ is Kobyashi hyperbolic, whereby $Top(\Omega)=Top(K_\Omega)$. Thus, consider the
case when both
$x,y\in\bdy\Omega$.
Since $\bdy\Omega$ is visible, there exist neighbourhoods $V_x$ of $x$ and $V_y$ of $y$ in $X$ such that
$\overline{V_x}\cap
\overline{V_y}=\emptyset$ and such that there exists a compact set
$K_0\subset \Omega$
such that the image of each (1,1)-almost-geodesic $\gamma:[0,T]\lrarw\Omega$ with $\gamma(0)\in V_x$
and $\gamma(T)\in
V_y$ intersects $K_0$. 
\medskip

\noindent{{\textbf{Claim~2.}}}
\emph{$K_\Omega(V_x\cap\Omega,V_y\cap\Omega)>0$}
\smallskip

\noindent{\emph{Proof of Claim~2.}
If possible let $K_\Omega(V_x\cap\Omega,V_y\cap\Omega)=0$. Then, there exist sequences
$(z_\nu)_{\nu\geq 
1}\subset V_x\cap\Omega$, $(w_\nu)_{\nu\geq 1}\subset V_y\cap\Omega$ such that
$K_\Omega(z_\nu,w_\nu)\to 0$ as
$\nu\to\infty$. By Proposition~\ref{P:almost-geodesic}, we can find
$\tau_\nu:[0,T_\nu]\lrarw\Omega$, a $(1,1/\nu)$-almost-geodesic such that $\tau(0)=z_\nu$ and
$\tau_\nu(T_\nu)=w_\nu$ for each $\nu\geq 1$. Clearly, each $\tau_\nu$ is also a $(1,1)$-almost-geodesic
with respect to $K_\Omega$. Therefore, by assumption, $\langle\tau_\nu\rangle\cap
K_0\neq\emptyset$ for all $\nu\geq 1$. Let $s_\nu\in[0,T_\nu]$ be such that
$\tau_\nu(s_\nu)=:a_\nu\in\langle\tau_\nu\rangle\cap K_0$ and, by passing to a
subsequence if needed, assume that $a_\nu\to a\in K_0$ as $\nu\to\infty$. Since $\tau_\nu$ is a
$(1,1/\nu)$-almost-geodesic
passing through $a_\nu$, we have
\begin{align*}
 K_\Omega(z_\nu,w_\nu)\geq T_\nu-1/\nu &=\left(T_\nu-s_\nu+1/\nu\right)+
 \left(s_\nu-0+1/\nu\right)-3/\nu \\ 
 &\geq K_\Omega(z_\nu,a_\nu)+K_\Omega(a_\nu,w_\nu)-3/\nu,
\end{align*}
for each $\nu\geq 1$.
Therefore, $K_\Omega(z_\nu,a_\nu)\to 0$ and $K_\Omega(a_\nu,w_\nu)\to 0$ as 
$\nu\to\infty$. Since $\Omega$ is a Kobayashi hyperbolic domain, from the discussion above it follows
that $z_\nu\to a$ and $w_\nu\to a$ as $\nu\to\infty$, which contradicts the fact that
$z_\nu\in V_x\cap\Omega$, $w_\nu\in V_y\cap\Omega$ and $\overline{V_x}\cap\overline{V_y}=\emptyset$.
Hence, Claim~2 is true. \hfill $\btl$}
\smallskip

Combining this claim with the discussion that precedes it, the
result follows.
\end{proof}

We shall now undertake the proof of Theorem~\ref{T:local-global-weak-visibility}.
We will need the following lemmas to prove this theorem:

\begin{lemma}\label{L:kobayashi-local-strict-inequality}
Let $X$ be a complex manifold and $\Omega\subseteq X$ be a Kobayashi hyperbolic domain. Let $U$ be an
open subset of $X$ such that $U\cap\Omega \neq\emptyset$ with $U\cap\Omega$ connected. Then, for every 
$V\subset U$ with $V\cap\Omega\neq\emptyset$ and $K_\Omega(V\cap\Omega,\Omega\setminus U)>0$, there
exists a constant
$M>0$ such that for every $v\in T^{(1,0)}_{x}(V\cap\Omega)$ and every $x\in V\cap\Omega$,
\[
  k_\Omega(x;v)\leq k_{U\cap\Omega}(x;v)\leq\left(1+Me^{-K_\Omega(x,\Omega\setminus U)}\right)k_\Omega(x;v).
\]
\end{lemma}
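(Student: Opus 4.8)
The plan is to start from Royden's localization estimate (Result~\ref{R:royden-localization}), which already gives
\[
  k_\Omega(x;v)\leq k_{U\cap\Omega}(x;v)\leq \coth\bigl(K_\Omega(x,\Omega\setminus U)\bigr)\,k_\Omega(x;v),
\]
and then sharpen the factor $\coth\bigl(K_\Omega(x,\Omega\setminus U)\bigr)$ into the claimed bound $1+Me^{-K_\Omega(x,\Omega\setminus U)}$. The left-hand inequality $k_\Omega(x;v)\leq k_{U\cap\Omega}(x;v)$ is immediate from the distance-decreasing property of the Kobayashi metric under the inclusion $U\cap\Omega\hookrightarrow\Omega$, so all the work is in the right-hand inequality. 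The key analytic fact is the elementary estimate for the hyperbolic cotangent: writing $t:=K_\Omega(x,\Omega\setminus U)$, one has
\[
  \coth t=\frac{e^{t}+e^{-t}}{e^{t}-e^{-t}}=1+\frac{2e^{-t}}{e^{t}-e^{-t}}=1+\frac{2}{1-e^{-2t}}\,e^{-2t},
\]
so that $\coth t = 1 + \varphi(t)e^{-t}$ where $\varphi(t)=2e^{-t}/(1-e^{-2t})$.

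First I would exploit the hypothesis $K_\Omega(V\cap\Omega,\Omega\setminus U)>0$. Call this positive number $\delta$. Then for every $x\in V\cap\Omega$ we have $t=K_\Omega(x,\Omega\setminus U)\geq\delta>0$, and on the half-line $[\delta,\infty)$ the auxiliary function $\varphi(t)=2e^{-t}/(1-e^{-2t})$ is bounded: indeed $1-e^{-2t}\geq 1-e^{-2\delta}>0$ there, so $\varphi(t)\leq 2e^{-t}/(1-e^{-2\delta})\leq 2e^{-\delta}/(1-e^{-2\delta})=:M$. This $M$ depends only on $\delta$, hence only on $V$ and $U$ (and $\Omega$), which is exactly the uniformity the statement demands. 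With this bound in hand,
\[
  \coth\bigl(K_\Omega(x,\Omega\setminus U)\bigr)=1+\varphi(t)e^{-t}\leq 1+Me^{-t}=1+Me^{-K_\Omega(x,\Omega\setminus U)},
\]
and substituting into Royden's inequality yields the claimed upper bound for $k_{U\cap\Omega}(x;v)$.

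I do not anticipate a genuine obstacle here: the whole content is the algebraic rewriting of $\coth$ and the observation that the lower Kobayashi-distance bound $\delta>0$ makes the correction factor $\varphi$ uniformly bounded on the relevant range. The one point requiring a little care is ensuring that the constant $M$ is genuinely independent of both $x\in V\cap\Omega$ and $v\in T^{(1,0)}_x(V\cap\Omega)$; this is automatic because $\varphi$ is monotone-controlled on $[\delta,\infty)$ and the bound $\varphi(t)\leq M$ holds simultaneously for all admissible $t$, while $v$ enters only through the factor $k_\Omega(x;v)$ that is common to all three terms. Thus the main (and essentially only) step is the elementary estimate $\coth t\leq 1+Me^{-t}$ for $t\geq\delta$, and the lemma follows at once from Result~\ref{R:royden-localization}.
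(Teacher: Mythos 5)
Your proof is correct, and it is essentially the paper's approach: the paper proves this lemma by invoking Royden's localization (Result~\ref{R:royden-localization}) and then citing the argument of Sarkar's Lemma~3.1, which amounts to exactly the elementary sharpening $\coth t \leq 1 + M e^{-t}$ for $t \geq \delta := K_\Omega(V\cap\Omega,\Omega\setminus U) > 0$ that you carry out explicitly. The only difference is that your write-up is self-contained (and in fact yields the slightly stronger factor $1 + M e^{-2t}$), whereas the paper leaves these calculations to the cited reference.
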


In view of Result~\ref{R:royden-localization}, the argument for 
Lemma~\ref{L:kobayashi-local-strict-inequality} is as in the proof of \cite[Lemma~3.1]{sarkar:lkdavd23}.

\begin{lemma}\label{L:local-global-almost-geodesic}
Let $X$ be a complex manifold and $\Omega\varsubsetneq X$ be a Kobayashi hyperbolic domain. Let $U$, $V$,
$W$ be non-empty open 
subsets of $X$ such that $W\cap\Omega\neq\emptyset$, $W\Subset V\Subset U$ with 
$U\cap\Omega$ connected, and such that $\delta:=K_\Omega(V\cap\Omega,\Omega\setminus U)>0$. Let
$(U\cap\Omega,K_{U\cap\Omega})$ have the weak visibility property for every pair of distinct points in
$V\cap\bdy\Omega$.
Let $\kappa\geq 0$. Then, there exists a constant $\kappa_0>0$ such that for every $x,y\in
W\cap\Omega$ with $x\neq y$, and for
$\sigma: [0, T]\lrarw \Omega$ a $(1,\kappa)$-almost-geodesic with respect to $K_\Omega$ joining $x$
and $y$ such that the
image of $\sigma$ is contained in $W$,
\[
  l_{U\cap\Omega}\left(\sigma|_{[s_1, s_2]}\right) \leq K_{U\cap\Omega}(\sigma(s_1),
  \sigma(s_2))+\kappa_0
\]
for every $s_1\leq s_2\in [0, T]$.  
\end{lemma}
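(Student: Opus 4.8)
The plan is to convert the $K_\Omega$-almost-geodesic bound enjoyed by $\sigma$ into a $K_{U\cap\Omega}$-length bound, exploiting that $\sigma$ stays inside $W$, where the comparison estimate of Lemma~\ref{L:kobayashi-local-strict-inequality} is available with an \emph{exponentially small} error governed by $K_\Omega(\,\cdot\,,\Omega\setminus U)$. The first step is to record what being a $(1,\kappa)$-almost-geodesic gives us: for the restriction $\sigma|_{[s_1,s_2]}$ we have, by Definition~\ref{D:almost-geodesic}$(b)$ and the fact that $k_\Omega(\sigma(t);\sigma'(t))\le 1$ a.e., that $l_\Omega(\sigma|_{[s_1,s_2]})\le s_2-s_1$, while Definition~\ref{D:almost-geodesic}$(a)$ gives $s_2-s_1\le K_\Omega(\sigma(s_1),\sigma(s_2))+\kappa$. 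Hence $l_\Omega(\sigma|_{[s_1,s_2]})\le K_\Omega(\sigma(s_1),\sigma(s_2))+\kappa$, and in particular $\sigma$ is a near-minimiser of $K_\Omega$-length on every subinterval, with additive defect at most $\kappa$.

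Next I would integrate the pointwise comparison from Lemma~\ref{L:kobayashi-local-strict-inequality}. Since $\langle\sigma\rangle\subset W\Subset V$ and $K_\Omega(V\cap\Omega,\Omega\setminus U)=\delta>0$, every point $z=\sigma(t)$ lies in $V\cap\Omega$, so
\[
  k_{U\cap\Omega}(\sigma(t);\sigma'(t))\le\bigl(1+Me^{-K_\Omega(\sigma(t),\Omega\setminus U)}\bigr)k_\Omega(\sigma(t);\sigma'(t))\le\bigl(1+Me^{-\delta}\bigr)k_\Omega(\sigma(t);\sigma'(t))
\]
for a.e.\ $t$, using $K_\Omega(\sigma(t),\Omega\setminus U)\ge\delta$. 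Integrating over $[s_1,s_2]$ gives
\[
  l_{U\cap\Omega}(\sigma|_{[s_1,s_2]})\le(1+Me^{-\delta})\,l_\Omega(\sigma|_{[s_1,s_2]})\le(1+Me^{-\delta})\bigl(K_\Omega(\sigma(s_1),\sigma(s_2))+\kappa\bigr).
\]
It remains to replace the $K_\Omega$-distance on the right by the target distance $K_{U\cap\Omega}$. Here I would again invoke the integrated form of Result~\ref{R:royden-localization} (or of Lemma~\ref{L:kobayashi-local-strict-inequality}), which on $V\cap\Omega$ yields a two-sided comparison $K_\Omega(z,w)\le K_{U\cap\Omega}(z,w)\le A\,K_\Omega(z,w)$ with $A=\coth\delta$; the lower bound $K_\Omega(\sigma(s_1),\sigma(s_2))\le K_{U\cap\Omega}(\sigma(s_1),\sigma(s_2))$ is exactly the direction we need. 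Substituting,
\[
  l_{U\cap\Omega}(\sigma|_{[s_1,s_2]})\le(1+Me^{-\delta})K_{U\cap\Omega}(\sigma(s_1),\sigma(s_2))+(1+Me^{-\delta})\kappa.
\]

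The main obstacle is the stray multiplicative factor $(1+Me^{-\delta})$ in front of $K_{U\cap\Omega}$: the lemma asserts a clean \emph{additive} defect $\kappa_0$, with coefficient exactly $1$, which is essential for the conclusion to be about $(1,\kappa_0)$-almost-geodesics (weak visibility, $\lambda=1$) rather than $(\lambda,\kappa_0)$-ones. I expect the genuine work to lie in absorbing this factor additively, and the standard mechanism is to subdivide $[s_1,s_2]$ using the almost-geodesic property to control $K_{U\cap\Omega}(\sigma(s_1),\sigma(s_2))$ on short pieces: on a subinterval the defect $Me^{-\delta}K_{U\cap\Omega}(\sigma(s_1),\sigma(s_2))$ is itself comparable (via $K_{U\cap\Omega}\le A\,K_\Omega$ and the almost-geodesic bound $K_\Omega\le s_2-s_1+\kappa$) to a bounded quantity, provided one chooses $\delta$, equivalently $W$, small enough that $Me^{-\delta}$ is dominated. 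Concretely I would argue that because $\langle\sigma\rangle\subset W$ is relatively compact, $K_{U\cap\Omega}(\sigma(s_1),\sigma(s_2))$ is bounded above by a constant depending only on $W$ and $U$, so the term $(1+Me^{-\delta})K_{U\cap\Omega}-K_{U\cap\Omega}=Me^{-\delta}K_{U\cap\Omega}$ is itself bounded by a constant, which may be folded into $\kappa_0$ together with $(1+Me^{-\delta})\kappa$. This yields the desired inequality with
\[
  \kappa_0:=Me^{-\delta}\,\mathrm{diam}_{K_{U\cap\Omega}}(W\cap\Omega)+(1+Me^{-\delta})\kappa,
\]
finite because $\overline W\cap\Omega$ (or rather $W\cap\Omega$ together with the containment $W\Subset V$ and the comparison $K_{U\cap\Omega}\le A K_\Omega$) has finite $K_{U\cap\Omega}$-diameter. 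I would close by verifying that this $\kappa_0$ is independent of the particular $x,y\in W\cap\Omega$ and of $\sigma$, depending only on the data $U,V,W,\kappa$.
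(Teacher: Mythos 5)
Your opening moves are correct, and they coincide with how the argument the paper imports (from \cite[Lemma~3.5]{sarkar:lkdavd23}) begins: $l_\Omega(\sigma|_{[s_1,s_2]})\le s_2-s_1\le K_\Omega(\sigma(s_1),\sigma(s_2))+\kappa\le K_{U\cap\Omega}(\sigma(s_1),\sigma(s_2))+\kappa$, together with the pointwise use of Lemma~\ref{L:kobayashi-local-strict-inequality} along $\sigma$. The gap is your final absorption step. Once you replace the weight $e^{-K_\Omega(\sigma(t),\Omega\setminus U)}$ by its worst-case value $e^{-\delta}$, you are left with the error term $Me^{-\delta}K_{U\cap\Omega}(\sigma(s_1),\sigma(s_2))$, which you propose to bound by $Me^{-\delta}\,{\rm diam}_{K_{U\cap\Omega}}(W\cap\Omega)$. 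That diameter is infinite in precisely the situation the lemma is designed for, namely when $W$ meets $\bdy\Omega$ (which is how the lemma is applied in proving Theorem~\ref{T:local-global-weak-visibility}): $W\cap\Omega$ then accumulates on $\bdy(U\cap\Omega)$, where $K_{U\cap\Omega}$-distances blow up. Concretely, for $\Omega=\D$ and $U,V,W$ Euclidean discs centred at $1\in\bdy\D$, the lens $U\cap\D$ is complete hyperbolic, so $\sup_{x,y\in W\cap\D}K_{U\cap\D}(x,y)=\infty$; worse, the $K_\D$-geodesics joining $x_\nu\to e^{i\alpha}$ to $y_\nu\to e^{-i\alpha}$ ($\alpha$ small) lie in $W$ and satisfy $K_{U\cap\D}(x_\nu,y_\nu)\to\infty$, so the quantity you need bounded is unbounded over exactly the class of curves being quantified. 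Relative compactness of $W$ in $X$ is irrelevant here (it does not make $W\cap\Omega$ relatively compact in $U\cap\Omega$), and the fallback of adjusting $\delta$ or $W$ cannot help: $\delta$ is part of the given data, and a small coefficient times an unbounded quantity is still unbounded.

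This is not a patchable technicality, because your argument never uses the hypothesis that $(U\cap\Omega,K_{U\cap\Omega})$ has the weak visibility property for pairs in $V\cap\bdy\Omega$ — and the lemma is false without it. Take $\Omega=\D\times\D$, $p=(1,0)$, and $U,V,W$ suitable products of discs; all the remaining hypotheses hold (e.g.\ $\delta>0$, since $K_{\D^2}$ is the maximum of the factor distances), but weak visibility of $U\cap\Omega$ fails, products being the standard non-visible examples. Let $\sigma=(\gamma_1,\gamma_2):[0,T]\lrarw W\cap\Omega$, where $\gamma_2$ is a unit-speed $K_\D$-geodesic and $\gamma_1$ is a unit-speed curve oscillating along a fixed compact arc at small height $\eps$ near $\bdy\D$ in the first factor of $W$, with $\gamma_1(T)=\gamma_1(0)$. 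Since the Kobayashi distance and metric of a product are the maxima of those of the factors, $K_\Omega(\sigma(s),\sigma(t))=|s-t|$ and $k_\Omega(\sigma;\sigma')=1$, so $\sigma$ is a $K_\Omega$-geodesic with image in $W$; yet, writing $G_1$ for the first factor of $U\cap\Omega$,
\[
  l_{U\cap\Omega}(\sigma)-K_{U\cap\Omega}(\sigma(0),\sigma(T))\;\geq\;\int_0^T k_{G_1}(\gamma_1(t);\gamma_1'(t))\,dt-T\;\geq\;c_\eps T,
  \qquad c_\eps:=\inf_{\rm arc}\big(k_{G_1}/k_\D\big)-1>0,
\]
which is unbounded as $T\to\infty$. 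So any proof that ignores the visibility hypothesis is doomed. The mechanism the paper relies on — Sarkar's argument up to his estimate (3.6), with Lemma~\ref{L:kobayashi-local-strict-inequality} supplying the localization inequality in the manifold setting — keeps the weight variable, yielding
$l_{U\cap\Omega}(\sigma|_{[s_1,s_2]})\le K_{U\cap\Omega}(\sigma(s_1),\sigma(s_2))+\kappa+M\int_{s_1}^{s_2}e^{-K_\Omega(\sigma(t),\Omega\setminus U)}\,dt$,
and then proves that this integral is bounded by a constant depending only on $U,V,W,\kappa$. That uniform integral bound is the real content of the lemma; it is where the weak visibility hypothesis and the almost-geodesic structure actually enter, and it is exactly what your substitution $K_\Omega(\sigma(t),\Omega\setminus U)\ge\delta$ discards.
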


Essentially, following the same argument as in the proof of
\cite[Lemma~3.5]{sarkar:lkdavd23}\,---\,which is valid in the present setting in view of
Lemma~\ref{L:kobayashi-local-strict-inequality}\,---\,up to the estimate \cite[(3.6)]{sarkar:lkdavd23}
gives us the desired inequality.

\begin{proof}[The proof of Theorem~\ref{T:local-global-weak-visibility}]
First we will prove that $(a)$ implies $(b)$. \textbf{Fix} $\kappa\geq 0$. As in the proof of
Theorem~\ref{T:local-global-visibility}, considering the \textbf{same}
notations, we will prove Claim~1 occurring in its proof with $\lambda=1$. As before, we will assume
that the latter claim is false. This gives us
$\gamma_\nu$, $\nu = 1, 2, 3,\dots$, $(1,\kappa)$-almost-geodesics with respect to $K_\Omega$ joining
$x_\nu$ to $y_\nu$, where $x_\nu\in W_p\cap\Omega$, $y_\nu\in V_q\cap\Omega$, having \textbf{all} the
other properties listed in the proof of the above-mentioned Claim~1.
Let $r_\nu>0$ and ${y'_\nu}=\gamma_\nu(r_\nu)$ be such that 
${y'_\nu}\in (W_p^2\setminus\overline{W_p^1})\cap\Omega$,
and such that the image of $\wt{\gamma}_{\nu}:=\gamma_\nu{\mid}_{[0,r_\nu]}$ is contained in $W_p^2$. As
indicated above, $W_p^1$ and $W_p^2$ are the same as in the proof of the above-mentioned Claim~1,
and as argued in that proof using Lemma~\ref{L:3_nbhd}, we get $p'\in\overline{W_p}\cap\bdy\Omega$
such that $x_\nu\to p'$ and
\begin{align}\label{E:d_h-weak}
 \lim_{\nu\to\infty}\sup_{z\in\langle\wt\gamma_\nu\rangle}d_h(z,\bdy\Omega)=0.
\end{align}
By
Lemma~\ref{L:local-global-almost-geodesic}, there exists a constant $\kappa_0>0$ such that for every
$s_1,s_2\in[0,r_\nu]$ and every $\nu\geq 1$ we have
\begin{align}\label{E:l_bound}
 l_{U_p\cap\Omega}\left(\wt\gamma_\nu|_{[s_1, s_2]}\right)\leq K_{U_p\cap\Omega}
 (\wt\gamma_\nu(s_1), \wt\gamma_\nu(s_2))+\kappa_0. 
\end{align}
We claim the inequality \eqref{E:l_bound} implies that, for each $\nu$, there exists an
absolutely continuous path
$\sigma_\nu: [0,r'_\nu]\lrarw W_p^2$ that satisfies $\langle\sigma_{\nu}\rangle =
\langle\wt{\gamma}_{\nu} \rangle$ and is a $(1,\kappa_0)$-almost-geodesic with respect to $K_{U_p\cap\Omega}$
joining $x_\nu$ to $y_\nu'$. The construction of $\sigma_\nu$ involves the following two steps:
\begin{enumerate}[leftmargin=25pt]
  \item the construction of an auxiliary absolutely continuous path 
  $\wt{\gamma}_{\nu}^{{\rm aux}}$ with domain $[0, \rho_{\nu}]$\,---\,which just equals $\wt{\gamma}_{\nu}$
  if the set $\{t\in [0, r_{\nu}] : \wt{\gamma}'_{\nu}(t) = 0\}$ has empty
  interior\,---\,such that the set $\{t\in [0, \rho_{\nu}] : \left(\wt{\gamma}^{{\rm aux}}_{\nu}\right)'(t)
  = 0\}$ has empty interior and such that
  $\langle\wt{\gamma}_{\nu} \rangle = \langle \wt{\gamma}_{\nu}^{{\rm aux}}\rangle$.

  \item the reparametrization of $\wt{\gamma}_{\nu}^{{\rm aux}}$ by its Kobayashi--Royden arc-length in
  $U_p\cap\Omega$ (which makes sense because, by construction, the arc-length function
  \[
    G_{\nu}(t) := \int_0^t k_{U_p\cap\Omega}\left(\wt{\gamma}_{\nu}^{{\rm aux}}(s); 
    \left(\wt{\gamma}^{{\rm aux}}_{\nu}\right)'(s)\right)\,ds,
    \quad t\in [0, \rho_{\nu}],
  \]
  is invertible). 
\end{enumerate}
The reparametrized path in Step~$(2)$ is the $\sigma_{\nu}$ mentioned.
The construction in Step~$(1)$ is well-known. That the reparametrization described in Step~$(2)$ results in
an absolutely continuous path is non-obvious when $G_{\nu}^{-1}$ is
not absolutely continuous. We
refer the reader to \cite{bharalimasanta:nsphuswrtKm24} for the details of this construction. That
$\sigma_{\nu}$ is absolutely continuous is the outcome of
\cite[Proposition~4.2]{bharalimasanta:nsphuswrtKm24}. Then, the inequality \eqref{E:l_bound}, together with
\cite[Corollary~1.3$(a)$]{bharalimasanta:nsphuswrtKm24}, implies that $\sigma_{\nu}$ is a
$(1,\kappa_0)$-almost-geodesic with respect to $K_{U_p\cap\Omega}$.
As $W_p^2$ is relatively compact, there exists a sequence $(\nu_j)_{j\geq 1}\subset \N$ and a point
$q'\in\overline{W_p^2}\cap\bdy\Omega\subseteq V_p\cap\bdy\Omega$ such that $\gamma_{\nu_j}
(r_{\nu_j})=\sigma_{\nu_j}(r'_{\nu_j})\to q'$. By
construction, $p'\neq q'$. Since $V_p\cap\bdy\Omega$ satisfies the weak visibility condition with
respect to
$K_{U_p\cap\Omega}$, there exists a compact set $K\subset U_p\cap\Omega$ such that for every 
$j\geq 1$ we can find
$z_j\in\langle\sigma_{\nu_j}\rangle=\langle\wt\gamma_{\nu_j}\rangle$ with $z_j\in K$, 
which contradicts \eqref{E:d_h-weak}. Hence,
our claim is true; therefore, $\bdy\Omega$ is weakly visible. 
\smallskip

Now, to prove the converse, we only need to prove that $\Omega$ is a hyperbolically imbedded domain.
Note that, in
the argument for $(b)$ implies $(a)$ in the proof of Theorem~\ref{T:local-global-visibility},
we only need to work with $(1,\kappa)$-almost-geodesics. Therefore, that argument establishes the
implication that we
want. Hence, the result.
\end{proof}
\smallskip

\section{Compactifications and visibility domains}\label{S:compacti-visi}
With $X$ and $\Omega\varsubsetneq X$ as in the discussions above, recall what an ``admissible
compactification of $\overline{\Omega}$" means; see Definition~\ref{D:admi-comp}. If $\Omc$ is an
admissible compactification of $\overline\Omega$ and $\emb:\overline\Omega\hookrightarrow \Omc$ is the
embedding that is part of the definition of $\Omc$, then let us denote the set
$\Omc\setminus \emb(\Omega)$ by $\bdy_\infty\Omega$.
\smallskip

We now present the central result of this section, which will play a key supporting role in the
proofs presented in the subsequent sections. In what follows, given a set $S\subseteq\Omc$,
$\overline S^\infty$ will denote the closure of $S$ in $\Omc$. The sets
$\vbdy\Omega$ and $\wvbdy\Omega$ appearing in the result below are as defined in 
Definition~\ref{D:visible-point}.

\begin{theorem}\label{T:visible-point-visible-domain}
Let $X$ be a complex manifold and $\Omega\varsubsetneq X$ be a Kobayashi hyperbolic domain. Let
$\Omc$ be any admissible compactification of $\overline\Omega$. 
\begin{itemize}[leftmargin=25pt]
 \item[$(a)$] Suppose the set $\bdy\Omega\setminus\vbdy\Omega$ is totally disconnected. Then, $\Omega$
 is a visibility domain subordinate to $\Omc$.

  \item[$(b)$] Suppose the set $\bdy\Omega\setminus\wvbdy\Omega$ is totally disconnected. Then,
  $\Omega$ is a weak visibility domain subordinate to $\Omc$.
\end{itemize}
\end{theorem}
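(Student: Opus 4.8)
The plan is to argue by contradiction, reducing the statement to a single pair of ideal boundary points and then extracting a nondegenerate ``cluster continuum'' on which local visibility is forced to fail. Throughout, write $\emb\colon\overline\Omega\hookrightarrow\Omc$ for the embedding defining $\Omc$, put $\bdy_\infty\Omega=\Omc\setminus\emb(\Omega)$ and $\mathcal A=\Omc\setminus\emb(\overline\Omega)$, so that $\bdy_\infty\Omega=\mathcal A\cup\emb(\bdy\Omega)$; by Definition~\ref{D:admi-comp}, $\mathcal A$ is closed in $\Omc$ and totally disconnected, while $\emb(\overline\Omega)$ is open, whence $\bdy_\infty\Omega$ is closed, hence sequentially compact. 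For part $(a)$ I would fix $\lambda\ge1$, $\kappa\ge0$, distinct $p,q\in\bdy_\infty\Omega$ and $\Omc$-open sets $V_p\ni p$, $V_q\ni q$ with $\overline{V_p}^{\,\infty}\cap\overline{V_q}^{\,\infty}=\emptyset$, as in Definition~\ref{D:visibility-domain}, and suppose the required compact set does not exist. Testing this against an exhaustion $(K_\nu)$ of $\Omega$ produces $(\lambda,\kappa)$-almost-geodesics $\gamma_\nu\colon[0,T_\nu]\to\Omega$ whose endpoints map into $V_p$ and $V_q$ and with $\langle\gamma_\nu\rangle\subset\Omega\setminus K_\nu$; in particular every $\langle\gamma_\nu\rangle$ escapes each compact subset of $\Omega$.

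Next I would form the cluster set $\Gamma=\bigcap_{m\ge1}\overline{\bigcup_{\nu\ge m}\emb(\langle\gamma_\nu\rangle)}^{\,\infty}$ in $\Omc$. Passing to a subsequence along which $\emb(\gamma_\nu(0))$ and $\emb(\gamma_\nu(T_\nu))$ converge, their limits $\xi_p\in\overline{V_p}^{\,\infty}$ and $\xi_q\in\overline{V_q}^{\,\infty}$ lie in $\Gamma$ and are distinct, so $\Gamma$ is nondegenerate; and since the $\langle\gamma_\nu\rangle$ escape every compact of $\Omega$, we get $\Gamma\subseteq\bdy_\infty\Omega$. The key structural point is that $\Gamma$ is connected: if $\Gamma=P\sqcup Q$ were a separation by disjoint $\Omc$-open sets, then (running the standard ``limit superior of connected sets'' argument via the Hausdorff property and sequential compactness) each connected set $\emb(\langle\gamma_\nu\rangle)$ would eventually lie wholly in one of the two open sets, contradicting the fact that for large $\nu$ it meets neighbourhoods of both $\xi_p\in P$ and $\xi_q\in Q$.

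I would then show $\Gamma$ meets no visible point. Suppose $\emb(p_0)\in\Gamma$ with $p_0\in\vbdy\Omega$, and let $U_{p_0}$ be the neighbourhood from Definition~\ref{D:visible-point}. As $\Gamma$ is a nondegenerate continuum, $\emb(p_0)$ is not isolated in $\Gamma$, so there is $\emb(q_2)\in\Gamma$ with $q_2\in U_{p_0}\cap\bdy\Omega$, $q_2\ne p_0$ (a point of $\Gamma$ near $\emb(p_0)$ lies in the open set $\emb(\overline\Omega)$ and, being in $\bdy_\infty\Omega$, in $\emb(\bdy\Omega)$). The pair $(p_0,q_2)$ then satisfies the visibility condition with respect to $K_\Omega$; since $\emb(p_0),\emb(q_2)\in\Gamma$, for large $\nu$ each $\gamma_\nu$ has a subarc with endpoints in the two separating neighbourhoods furnished by that condition, and this subarc is again a $(\lambda,\kappa)$-almost-geodesic, hence meets the associated fixed compact set\,---\,contradicting $\langle\gamma_\nu\rangle\cap K_\nu=\emptyset$. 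Thus $\Gamma\subseteq\mathcal A\cup\emb(N)$, where $N=\bdy\Omega\setminus\vbdy\Omega$ is totally disconnected by hypothesis and, since $\vbdy\Omega$ is relatively open in $\bdy\Omega$, relatively closed.

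The final step\,---\,and the main obstacle\,---\,is to contradict the existence of a nondegenerate continuum $\Gamma\subseteq\mathcal A\cup\emb(N)$. Total disconnectedness of $\mathcal A$ and of $N$ does \emph{not} by itself suffice: the union of two totally disconnected sets, even with one closed, can be a nondegenerate continuum (the Knaster--Kuratowski fan). Writing $\Gamma=(\Gamma\cap\mathcal A)\sqcup(\Gamma\cap\emb(N))$, the second piece is relatively open in $\Gamma$ (as $\emb(\overline\Omega)$ is open) and both pieces are totally disconnected, so a clopen separation\,---\,hence a contradiction with connectedness, forcing $\Gamma$ into one totally disconnected piece and thus to a single point\,---\,would follow once one knows that $\Gamma\cap\mathcal A$ is also relatively open, i.e.\ that no point of $\mathcal A$ is a $\Gamma$-limit of $\emb(N)$. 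This last fact can fail for abstract continua, so its proof must exploit that $\Gamma$ is the cluster set of \emph{escaping almost-geodesics}: I expect to establish it by combining the total disconnectedness of the ideal set $\mathcal A$ with the boundary behaviour recorded in Lemma~\ref{L:3_nbhd} to ``trim off'' the points at infinity, and carrying this out in the merely sequentially compact space $\Omc$ is where the real work lies. Part $(b)$ follows by the identical scheme, taking $\lambda=1$ throughout and replacing $\vbdy\Omega$ and $N$ by $\wvbdy\Omega$ and $\bdy\Omega\setminus\wvbdy\Omega$.
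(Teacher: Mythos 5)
Your proposal follows the same skeleton as the paper's proof: assume property $(*)$ fails for some $\lambda,\kappa,p,q,V_p,V_q$, extract almost-geodesics $\gamma_\nu$ escaping an exhaustion, form a limit object of their images in $\Omc$, show it is connected and nondegenerate via sequential compactness, and then play a visible point of that limit object against a nearby second boundary point to force the curves through a fixed compact set. (Two implementation remarks. First, the paper builds the limit object as a \emph{local Hausdorff limit} $S\subseteq X$ of the images and only then closes it up in $\Omc$; this guarantees that every point of $S$ is approached by the curves along the \emph{full} chosen subsequence. With your cluster set $\Gamma$, two of its points may a priori be approached along disjoint subsequences, so your assertion that ``for large $\nu$ each $\gamma_\nu$ has a subarc with endpoints in the two separating neighbourhoods'' is not justified as written: you must first pass to a subsequence realizing $\emb(p_0)$ as a limit, replace $\Gamma$ by the (still connected, still nondegenerate) cluster set of that subsequence, and only then pick $q_2$. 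This is fixable. Second, your way of producing $q_2$\,---\,from the fact that a connected Hausdorff set with two points has no isolated points\,---\,is a legitimate alternative to the paper's construction of the second point via Lemma~\ref{L:3_nbhd}.)

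The genuine gap is the final step, and you say so yourself: you reduce the theorem to the claim that no nondegenerate continuum can lie in $\mathcal{A}\cup\emb(N)$, where $\mathcal{A}:=\Omc\setminus\emb(\overline\Omega)$ and $N:=\bdy\Omega\setminus\vbdy\Omega$, and then you only ``expect'' to prove this, proposing to use Lemma~\ref{L:3_nbhd} to trim off the points of $\mathcal{A}$. That route is off-target: Lemma~\ref{L:3_nbhd} concerns almost-geodesics near a point of $\bdy\Omega$ and gives no control whatsoever on approach to the ideal set $\mathcal{A}$, so it cannot deliver the relative openness of $\Gamma\cap\mathcal{A}$ that you want. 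This is precisely the juncture at which the paper's proof acts: since $\vbdy\Omega$ is relatively open in $\bdy\Omega$, the set $\omega:=N$ is closed in $\overline\Omega$, and the paper invokes the fact that $\overline{\omega}^{\infty}$ is totally disconnected, which the connected set $\overline{S}^{\infty}$ with at least two points then cannot meet entirely, yielding $\xi\in S\cap\vbdy\Omega$. Moreover, your Knaster--Kuratowski objection, while a reasonable caution for abstract unions of totally disconnected sets, does not apply to the set actually in play: the continuum to be excluded is closed in $\Omc$, and its finite part $\Gamma\cap\emb(\overline\Omega)=\Gamma\setminus\mathcal{A}$ is closed in $\emb(\overline\Omega)\cong\overline\Omega$, hence $\sigma$-compact; thus $\Gamma=(\Gamma\cap\mathcal{A})\cup\bigcup_{n}C_n$ is a \emph{countable} union of closed totally disconnected subsets with each $C_n$ compact. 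Whenever $\Omc$ is compact (as it is for every compactification arising in this paper\,---\,the end compactification, closures in one-point compactifications, $\widehat{\C}$), all these pieces are zero-dimensional compact Hausdorff spaces, $\Gamma$ is normal, and the countable closed sum theorem of dimension theory forces $\dim\Gamma=0$, i.e.\ $\Gamma$ is totally disconnected, hence a point\,---\,the fan is neither compact nor sequentially compact and so is no counterexample here. (You are right that in the bare generality of Definition~\ref{D:admi-comp}, where $\Omc$ is only sequentially compact, this step deserves more justification than the paper's ``it easily follows''; but the needed statement is a point-set fact resting on the closedness of $N$ and the $\sigma$-compactness of $\overline\Omega$, not on the dynamics of the almost-geodesics.) Without this step your argument does not prove the theorem, and part $(b)$ inherits the identical gap.
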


\begin{proof}
Fix a Hermitian metric $h$ on $X$ such that $(X, d_h)$, where $d_h$ is the distance induced
by $h$ on $X$, is Cauchy-complete. Fix
$p,q\in\bdy_\infty\Omega$ such that $p\neq q$. Let us first prove $(a)$. Therefore, fix $\lambda\geq 1$
and $\kappa\geq 0$. Let $V_p$, $V_q$ be two 
$\Omc$-open neighbourhoods such that $\overline{V_p}^\infty\cap\overline{V_q}^\infty=\emptyset$.
For any pair $x,y\in\Omega$ we define the set

\[
  AG_{\Omega}(x,y):=\{\gamma:[0,T]\lrarw\Omega:
  \text{$\gamma$ is a $(\lambda,\kappa)$-almost-geodesic joining $x$ and $y$}\}.
\]
Here, the parameters $\lambda$ and $\kappa$ are those that we fixed above.
\smallskip

We shall show that there exists a compact set $K\subset \Omega$ such that for every $x\in
\emb^{-1}(V_p\setminus \bdy_{\infty}\Omega)$, every
$y\in\emb^{-1}(V_q\setminus \bdy_{\infty}\Omega)$, and every $\gamma\in
AG_\Omega(x,y)$, we have $\langle\gamma\rangle\cap K\neq\emptyset$.
If possible, let this assertion be false. Let $(K_\nu)_{\nu\geq 1}$ be an exhaustion by compacts of
$\Omega$. Then, there exist sequences $(x_\nu)_{\nu\geq 1}\subset\emb^{-1}(V_p\setminus
\bdy_{\infty}\Omega)$,
$(y_\nu)_{\nu\geq 1}\subset\emb^{-1}(V_q\setminus \bdy_{\infty}\Omega)$,
and $(\lambda,\kappa)$-almost-geodesics $\gamma_\nu$ in 
$AG_\Omega(x_\nu,y_\nu)$, for each $\nu\geq 1$, such that $\langle\gamma_\nu\rangle\cap
K_\nu=\emptyset$. Let
$\gamma_\nu:[0,T_\nu]\lrarw\Omega$. Let us define
\[
  S_\infty := \{x \in \Omc : \exists (\nu_j)_{j\geq 1}\subset \N \ \text{and $t_j\in [0, T_{\nu_j}]$
  such that $\emb(\gamma_{\nu_j}(t_j))\lrarw x$ as $j\to \infty$}\}.
\]
Since $\Omc$ is sequentially compact, $S_\infty$ contains all subsequential limits (in $\Omc$) of
$(\emb(x_\nu))_{\nu\geq 1}$ and $(\emb(y_\nu))_{\nu\geq 1}$. As $(x_\nu)_{\nu\geq 1}\subset
\emb^{-1}(V_p\setminus \bdy_{\infty}\Omega)$, $(y_\nu)_{\nu\geq 1}\subset \emb^{-1}(V_q\setminus
\bdy_{\infty}\Omega)$, and as $\overline{V_p}^\infty\cap\overline{V_q}^\infty=\emptyset$, $S_\infty$
contains at least two elements. Also, since $\langle\gamma_\nu\rangle\cap K_\nu=\emptyset$ for all
$\nu\geq 1$, $S_\infty\subseteq \bdy_{\infty}\Omega$.
\medskip

\noindent{{\textbf{Claim.}}}
\emph{The set $S_\infty$ is connected.}
\smallskip

\noindent{\emph{Proof of Claim.}
If possible, let the claim be false. Then, there exist two disjoint $\Omc$-open sets $U,V$ such that
$S_\infty\subseteq U\cup V$, and $U\cap S_\infty\neq\emptyset$, $V\cap S_\infty\neq\emptyset$.
Therefore, since each
$S_{\nu}:=\gamma_\nu([0,T_\nu])$
is connected, for sufficiently large $\nu$, there exists a point (recall our mild abuse of notation described in
Section~\ref{SS:visi-compacti-WD}) $z'_\nu\in \emb(S_{\nu})\setminus(U\cup V)$. As $\Omc$ is sequentially compact,
there exists a subsequence ${(z'_{\nu_k})}_{k\geq 1}$ and a point $\wt{p}\in S_\infty$ such that
$z'_{\nu_k}\to \wt{p}$ with respect to the topology of $\Omc$. Then 
$\wt{p}\in S_\infty\setminus (U\cup V)$, which is impossible. Hence the claim. \hfill $\btl$}
\smallskip

Define $S:= \emb^{-1}(S_\infty)\subseteq \bdy\Omega$. If $S$ were at most finite, then $S_\infty$ would be
contained in 
$\bdy_{\infty}\Omega\setminus \emb(\bdy\Omega)$ \textbf{except} for at most finitely many points. This would
contradict the above claim since $|S_\infty|\geq 2$ and $\bdy_{\infty}\Omega\setminus \emb(\bdy\Omega)$, by definition, is totally disconnected. Thus $S$ is an infinite set; in particular $|S|\geq 2$.
\smallskip

We now wish to show that there exists a point $\xi\in S\cap\vbdy\Omega$. To this end, assume, if possible, that
$S\cap \vbdy\Omega = \emptyset$. Then, $S\subseteq\bdy\Omega\setminus\vbdy\Omega$: a totally disconnected set.
Hence, $S$ is totally disconnected. As $|S|\geq 2$, consider $x, y\in S$, $x\neq y$, and $r>0$ such that
$y\notin B_{d_h}(x,r)$ (see Remark~\ref{rem:open-d-h-ball-notation} for the latter notation). Write
\[
  K := \big(S\cap \overline{B_{d_h}(x, 2r)}\,\big)\setminus B_{d_h}(x,r)
\]
(to clarify, the closure is with respect to the topology on $X$). Since $S$ is totally disconnected, for each
$z\in K$, there exist disjoint subsets $A_z$ and $B_z$, open relative to $S$, such that $x\in A_z$, $z\in B_z$,
and $S = A_z\cup B_z$. By definition, $S$ is a closed subset of $\overline{\Omega}$. Now, recall that (see the
discussion in the proof of Proposition~\ref{P:almost-geodesic}) that the metric topology induced by $d_h$ equals
the topology on $X$. Thus, as $(X, d_h)$ is Cauchy-complete, it follows from the Hopf--Rinow theorem that $K$ is
compact. Thus, there exist $z_1,\dots, z_N\in K$ such that $K\subset \cup_{j=1}^N B_{z_j}$. In what follows in
this paragraph, we shall use more than once the facts\,---\,owing to the definition of $\Omc$ and the
observation that $S_\infty\cap \emb(E) = \emb(S)\cap \emb(E)$ for any set
$E\subseteq \overline{\Omega}$\,---\,that, under $\emb$, the image of any $S$-open subset is
$S_\infty$-open and the image of any compact subset of $\overline{\Omega}$ is closed in $\Omc$. Define
\[
  A := \emb\left(B_{d_h}(x,r)\cap \bigcap\nolimits_{1\leq j\leq N}A_{z_j}\right) \quad\text{and}
  \quad B = \big(S_{\infty}\setminus \emb(\overline{B_{d_h}(x,r)\cap \Omega})\big)
            \cup \emb\left(\bigcup\nolimits_{1\leq j\leq N}B_{z_j}\right).
\]
The sets $A$ and $B$ are disjoint subsets open relative to $S_\infty$ such that $S_\infty = A\cup B$,
$x\in A$, and $y\in B$. This contradicts the fact that $S_\infty$ is connected.
Therefore, there exists $\xi\in S\cap\vbdy\Omega$.
\smallskip

Since
$\overline{V_p}^\infty\cap\overline{V_q}^\infty=\emptyset$,
without loss of generality we can assume that
$\xi\notin\overline{V_q}^\infty$. Since $\xi\in S$, passing to a subsequence and relabelling, if needed,
we may assume that for each $\nu\geq 1$, there exists $r_\nu\in [0,T_\nu)$ such that
$\gamma_\nu(r_\nu)\to \xi$ as $\nu\to\infty$. Since $\xi\in\vbdy\Omega$ and
$\xi\notin\overline{V_q}^\infty$,
there exists a neighbourhood $U_\xi$ of $\xi$ in $X$ such that $U_\xi$ is relatively
compact, $\overline{U_\xi}\cap\overline{\emb^{-1}(V_q)}=\emptyset$, and such that every pair of distinct points
$\xi_1,\xi_2\in U_\xi\cap\bdy\Omega$ satisfies the visibility condition with respect to $K_\Omega$.
Also, fix neighbourhoods $W_{\xi}$, $V_{\xi}$ of $\xi$ such that $W_\xi\Subset V_\xi\Subset U_\xi$. 
We may assume that $\gamma_\nu(r_\nu)\in W_\xi\cap\Omega$ for all $\nu\geq 1$.
Let $s_\nu>r_\nu$ and ${y'_\nu}=\gamma_\nu(s_\nu)$ be such that 
${y'_\nu}\in (V_\xi\setminus\overline{W_\xi})\cap\Omega$, and such that the image of
$\wt{\gamma}_{\nu}:=\gamma_\nu{\mid}_{[r_\nu,s_\nu]}$ is contained in $V_\xi$. By Lemma~\ref{L:3_nbhd}
(and relabeling subsequences as usual) we have
\begin{align}\label{E:d_h-visibility}
 \lim_{\nu\to\infty}\sup_{z\in\langle\wt\gamma_\nu\rangle}d_h(z,\bdy\Omega)=0.
\end{align}
Therefore, there exists a sequence
$(\nu_j)_{j\geq 1}\subset \N$ and a point $\eta\in\overline{V_\xi}\cap\bdy\Omega\subseteq
U_\xi\cap\bdy\Omega$ such that
$y'_{\nu_j}\to \eta$.
By construction, $\xi\neq\eta$. Now, since $U_\xi\cap\bdy\Omega$
satisfies the visibility condition with respect to $K_\Omega$, there exists a compact set
$L\subset \Omega$ such that for
every $j\geq 1$ we can find $\wt z_{\nu_j}\in\langle\wt\gamma_{\nu_j}\rangle$ with $\wt z_{\nu_j}\in L$, 
which contradicts \eqref{E:d_h-visibility}.
This establishes $(a)$.
\smallskip

It is evident that, owing to our definitions, the entire argument above would be valid if we fix
$\lambda = 1$ and replace $\vbdy\Omega$ by $\wvbdy\Omega$. Thus $(b)$ follows.
\end{proof}

The above result gives us the following corollaries.

\begin{corollary}\label{C:visible-visibility-domain}
Let $X$ be a complex manifold and $\Omega\varsubsetneq X$ be a Kobayashi hyperbolic domain. Then, the following are
equivalent:
\begin{itemize}[leftmargin=25pt]
 \item[$(a)$]$\bdy\Omega$ is visible.
 \item[$(b)$]$\Omega$ is a visibility domain subordinate to any admissible compactification of
 $\overline\Omega$.
 \item[$(c)$]$\Omega$ is a visibility domain subordinate to some admissible compactification of
 $\overline\Omega$.
\end{itemize}
\end{corollary}

\begin{proof}
Let $\bdy\Omega$ be visible. Then, every point in $\bdy\Omega$ is a visible point of $\Omega$.
Therefore, $(a)$ implies $(b)$ follows from Theorem~\ref{T:visible-point-visible-domain}-$(a)$. That $(b)$
implies $(c)$ is trivial. Finally, $(c)$ implies $(a)$ follows by Definition~\ref{D:visible}.  
\end{proof}

\begin{corollary}\label{C:visible-weak-visibility-domain}
Let $X$ be a complex manifold and $\Omega\varsubsetneq X$ be a Kobayashi hyperbolic domain. Then, the following are
equivalent:
\begin{itemize}[leftmargin=25pt]
 \item[$(a)$]$\bdy\Omega$ is weakly visible.
 \item[$(b)$]$\Omega$ is a weak visibility domain subordinate to any admissible compactification of
 $\overline\Omega$.
 \item[$(c)$]$\Omega$ is a weak visibility domain subordinate to some admissible compactification of
 $\overline\Omega$.
\end{itemize}
\end{corollary}

The proof of the above is analogous to that of Corollary~\ref{C:visible-visibility-domain}.
\smallskip

Note that, in view of the above corollaries, Theorem~\ref{T:equivalence} follows immediately.
\smallskip

Finally, we can provide a proof of Proposition~\ref{P:visible-points-empty}. 

\begin{proof}[The proof of Proposition~\ref{P:visible-points-empty}]
If possible let $\bdy\Omega\setminus\vbdy\Omega$ be a non-empty, totally disconnected set. Then, by
Theorem~\ref{T:visible-point-visible-domain} and  Corollary~\ref{C:visible-visibility-domain}, we have
$\bdy\Omega$ is visible. Therefore, by Definition~\ref{D:visible-point}, it follows that
$\bdy\Omega=\vbdy\Omega$, which contradicts the fact that $\bdy\Omega\setminus\vbdy\Omega$ is 
non-empty. Similarly, we can prove that $\bdy\Omega\setminus\wvbdy\Omega$ cannot be a non-empty,
totally disconnected set.     
\end{proof}
\smallskip

\section{Sufficient conditions for visibility}\label{S:sufficint-condition}
In this section, we shall present several sufficient conditions for a domain $\Omega\varsubsetneq X$, 
$\Omega$ and $X$ as in the previous sections, such that $\bdy\Omega$ is visible. We will first begin
with a sufficient condition that is inspired by a construction in \cite{bharalizimmer:gdwnv23}\,---\,i.e.,
involving the notion of a ``local Goldilocks point'' alluded to in Section~\ref{SS:sufficient}.
Thereafter, we will provide proofs of the results stated in Section~\ref{SS:sufficient}.%
\smallskip

We first need a few definitions and observations.

\begin{definition}\label{D:M(r)}
Let $X$ be a complex manifold and $\Omega\varsubsetneq X$ be a Kobayashi hyperbolic domain. Fix
$p\in\bdy\Omega$. Let $(U,\varphi)$ be a holomorphic chart of $X$ centered at $p$. We define
\begin{multline*}
 \qquad\qquad M(r;\varphi,U):=\sup\left\{\frac{1}{k_\Omega(x;v)}: \text{$v\in T_x^{(1,0)}\Omega$ such that
 $\|\varphi'(x)v\|=1$} \right. \\
  \text{and $x\in U\cap\Omega$ such that $d_{{\rm
  Euc}}(\varphi(x),\varphi(U\cap\bdy\Omega))\leq r$}
  \bigg\}.
\end{multline*}
\end{definition}

The function $M(\bcdot;\varphi,U)$ depends on the choice of charts, which could potentially pose
difficulties in any
analysis purportedly about $\Omega$ but which relies on estimates for $M(\bcdot;\varphi,U)$ for some
specific $(U,\varphi)$. The following lemma is a step towards addressing
this issue. In this paper, we abbreviate $d_{Euc}(z,S)$ as $\delta_S(z)$, for any 
$z\in\Cn$ and $S\subseteq\Cn$. The notation $\|\bcdot\|$ denotes the Euclidean norm on $\Cn$.

\begin{lemma}\label{L:local-goldilocks-chart}
Let $X$, $\Omega$, and $p\in\bdy\Omega$ be as in Definition~\ref{D:M(r)}. Let $(U,\Phi)$ and
$(V,\Psi)$ be two holomorphic
charts of $X$ centered at $p$. Then, there exists a neighbourhood $\Delta$ of $p$ contained in $U\cap V$
and constants $R,\beta\geq 1$, which depend only on $\Phi$, $\Psi$, and $\Delta$ such that
\[
  M(r;\Phi,W)\leq \beta M(Rr;\Psi, W) \quad\text{and} \quad
  \delta_{\Psi(W\cap\bdy\Omega)}(\Psi(x))\leq R\delta_{\Phi(W\cap\bdy\Omega)}(\Phi(x))
\]
for every neighbourhood $W$ of $p$ such that $W\subseteq \Delta$, every $r>0$, and for every $x\in W\cap\Omega$. 
\end{lemma}
\begin{proof}
Let $\Delta$ be a neighbourhood of $p$ such that $\overline{\Delta}\subset U\cap V$ and such
that $\overline{\Phi(\Delta)}$ is a closed polydisk with centre $0\in \C^n$. Fix a neighbourhood $W$ of
$p$ such that $W\subseteq \Delta$. Let $x\in W\cap\Omega$ and
$r>0$  be such that $\delta_{\Phi(W\cap\bdy\Omega)}(\Phi(x))\leq r$. Then, there exists 
$\xi\in \overline{W}\cap\bdy\Omega$ such that
$\|\Phi(x)-\Phi(\xi)\| = \delta_{\Phi(W\cap\bdy\Omega)}(\Phi(x))$.
Since $\Psi\circ(\Phi{\mid}_{\overline{\Delta}})^{-1}$ is the restriction of a
biholomorphic map on $\Phi(U\cap V)$ and $\overline{\Delta}$ is compact, there exists a constant 
$R\geq 1$ such 
that the operator norm $\|(\Psi\circ(\Phi{\mid}_{\overline{\Delta}})^{-1})'(\zeta)\|_{{\rm OP}}\leq R$
for all $\zeta\in \Phi(\overline{\Delta})$. In what follows, we shall abbreviate
$\Psi\circ(\Phi{\mid}_{\overline{\Delta}})^{-1}$ to
$\Psi\circ\Phi^{-1}$ for simplicity of notation. Since the line
segment joining $\Phi(x)$ and $\Phi(\xi)$ lies in $\overline{\Phi(\Delta)}
= \Phi(\overline{\Delta})$, we have
\begin{align}
 \|\Psi(x)-\Psi(\xi)\|&=\|(\Psi\circ\Phi^{-1})(\Phi(x))-(\Psi\circ\Phi^{-1})
 (\Phi(\xi))\|\notag\\
 &\leq\sup_{\zeta\in\Phi(\overline{\Delta})}
 \| (\Psi\circ\Phi^{-1})'(\zeta)\|_{{\rm OP}}\|(\Phi(x)-
 \Phi(\xi))\|\notag\\
 &\leq R\delta_{\Phi(W\cap\bdy\Omega)}(\Phi(x))\label{E:local-goldilocks-delta}.
\end{align}
It follows that $\delta_{\Psi(W\cap\bdy\Omega)}(\Psi(x))\leq
R\delta_{\Phi(W\cap\bdy\Omega)}(\Phi(x))$.
Now, there exists a constant $\beta\geq 1$ such that for any $z\in \overline{\Delta}\cap\Omega$ and 
every $v\in T_z^{(1,0)}\Omega$ that satisfies $\|\Phi'(z)v\|=1$, we have $1/\beta\leq\|\Psi'(z)v\|
\leq\beta$. Since, 
\begin{itemize}
  \item for each $v\in T_z^{(1,0)}\Omega$ that satisfies $\|\Phi'(z)v\|=1$, if
  $\wt{v} := v/\|\Psi'(z)v\|$, then $\|\Psi'(z)\wt{v}\| = 1$,
  \item the argument culminating in \eqref{E:local-goldilocks-delta} implies that
  $\{x\in W\cap\Omega : \delta_{\Phi(W\cap\bdy\Omega)}
  (\Phi(x))\leq r\}$\linebreak $\subseteq \{x\in W\cap\Omega : \delta_{\Psi(W\cap\bdy\Omega)}
  (\Psi(x))\leq Rr\}$,
\end{itemize}
the identity $k_{\Omega}(z; c\,\bcdot) = |c|k_{\Omega}(z; \bcdot)$ on $T_z^{(1,0)}\Omega$ (for any 
$z\in \Omega$ and any $c\in \C$) implies that
\[
  M(r;\Phi,W)\leq \beta M(Rr;\Psi, W).
\]
Note that the constants $R$ and $\beta$ are determined by $\Delta$ but (note the estimate
\eqref{E:local-goldilocks-delta}) apply to every $W$. Hence the proof.
\end{proof}

\begin{definition}\label{D:local-goldilocks-point}
Let $X$ be a complex manifold and $\Omega\varsubsetneq X$ be a Kobayashi hyperbolic domain. A boundary
point $p\in\bdy\Omega$ is called a \emph{local Goldilocks point} if there exists a holomorphic chart
$(\mathcal{U},\varphi)$ centered at $p$ and a neighbourhood $U_0$ of $p$ contained in  
$\mathcal{U}$ such that for any neighbourhood $U$ of $p$ such that $U\subseteq U_0$:
\begin{itemize}[leftmargin=25pt]
 \item[$(a)$] for some $\varepsilon>0$ we have
 \[
   \int_{0}^{\varepsilon}\frac{1}{r}M(r;\varphi,U)\;dr<\infty,
\]
 \item[$(b)$] for each $z_0\in\Omega$ there exist constants $C,\alpha>0$ (which depend on
 $z_0,U,\varphi$) such that for
 all $x\in U\cap\Omega$,
 \[
   K_\Omega(z_0,x)\leq C+\alpha\log\frac{1}{\delta_{\varphi(U\cap\bdy\Omega)}(\varphi(x))}.
 \]
\end{itemize}
Let $\bdy_{{\rm lg}}\Omega\subseteq\bdy\Omega$ denote the set of all local Goldilocks
points.
\end{definition}

\begin{remark}\label{rem:local-goldilocks}
Note that the function $M(\bcdot;\varphi,U)$ is a non-negative, monotone increasing function, owing to
which the integral in $(a)$ makes sense. Also note that the status of whether a point
$p\in\bdy\Omega$ is a local Goldilocks point is independent of the choice of chart featured in
Definition~\ref{D:local-goldilocks-point}. To see this, let $(\mathcal{U}, \varphi)$ be as in the latter
definition and consider another holomorphic chart $(\mathcal{W}, \psi)$ centered at $p$. Let:
\begin{itemize}
  \item $\Delta_1$, $R_1$, and $\beta_1$ be the objects provided by Lemma~\ref{L:local-goldilocks-chart}
  if we take $(U,\Phi)  = (\mathcal{W},\psi)$, and $(V,\Psi) = (U_0,\varphi)$.
  \item $\Delta_2$, $R_2$ and $\beta_2$ be the objects provided by Lemma~\ref{L:local-goldilocks-chart}
  if we take $(U,\Phi)  = (U_0,\varphi)$ and $(V,\Psi) = (\mathcal{W},\psi)$.
\end{itemize}
Here $U_0\subseteq \mathcal{U}$ is as in Definition~\ref{D:local-goldilocks-point}. 
Define $W_0 := \Delta_1\cap \Delta_2$, which is a neighbourhood of $p$ contained in $\mathcal{W}$. Fix a
neighbourhood $W$ of $p$ such that $W\subseteq W_0$. Since $W\subseteq U_0$, by definition,
 \[
   \int_{0}^{\varepsilon}\frac{1}{r}M(r;\varphi,W)\;dr<\infty
\]
for some $\varepsilon > 0$. Then, as $W\subseteq \Delta_1$, by Lemma~\ref{L:local-goldilocks-chart}:
\[
  \infty > \beta_1\int_{0}^{\varepsilon}\frac{1}{r}M(r;\varphi,W)\;dr
  \geq \int_{0}^{\varepsilon}\frac{1}{r}M(r/R_1;\psi,W)\;dr \\
  = \int_{0}^{\varepsilon/R_1}\frac{1}{r}M(r;\psi,W)\;dr.
\]
Fix $z_0\in \Omega$. By definition (since $W\subseteq U_0$), for all $x\in W\cap\Omega$,
\[
  K_\Omega(z_0,x)\leq C+\alpha\log\frac{1}{\delta_{\varphi(W\cap\bdy\Omega)}(\varphi(x))}
  \leq (C+\alpha\log(R_2)) + \alpha\log\frac{1}{\delta_{\psi(W\cap\bdy\Omega)}(\psi(x))}.
\]
The last inequality is due to Lemma~\ref{L:local-goldilocks-chart}, which applies as
$W\subseteq \Delta_2$. So, with $W_0$ taking the role of $U_0$ in
Definition~\ref{D:local-goldilocks-point}, we see that the conditions~$(a)$
and~$(b)$ in the latter definition hold true with $\psi$ and $W$ replacing $\varphi$ and $U$, respectively.
In short: the status of whether a point $p\in\bdy\Omega$ is a local Goldilocks point is independent
of the choice of holomorphic charts centered at $p$.
\end{remark}

If $(\mathcal{U},\varphi)$ is a chart centered at $p\in \bdy\Omega$ and satisfies
the conditions in Definition~\ref{D:local-goldilocks-point}, then we just saw that in arguing that
any other holomorphic chart centered at $p$ satisfies conditions analogous to those on $(\mathcal{U},
\varphi)$, we require that the conditions~$(a)$ and~$(b)$ in the latter definition hold on neighbourhoods $U$
of $p$ that are possibly \emph{smaller} than $\mathcal{U}$. The latter are the neighbourhoods we shall work
with in the following proofs. As coordinate charts, these, in reference to
Definition~\ref{D:local-goldilocks-point}, should be $(U, \varphi{\mid}_{U})$ but, for simplicity of notation,
these will be denoted by $(U,\varphi)$.

\begin{lemma}\label{L:local-goldilocks}
Let $X$ be a complex manifold and $\Omega\varsubsetneq X$ be a Kobayashi hyperbolic
domain. Let $p\in\bdy_{{\rm lg}}\Omega$. Let $(U,\varphi)$ be a holomorphic chart centered at $p$ that
satisfies the conditions~$(a)$ and~$(b)$ in
Definition~\ref{D:local-goldilocks-point} and such that $U$ is relatively compact. Fix $\lambda\geq 1$
and $\kappa\geq 0$. Suppose for each $\nu\geq 1$, $\gamma_\nu:[0,r_\nu]\lrarw\Omega$ is a 
$(\lambda,\kappa)$-almost-geodesic such that $\gamma_\nu([0,r_\nu])\subset U$, and such that
$\gamma_\nu(0)\to p'\in U\cap\bdy\Omega$, $\gamma_\nu(r_\nu)\to q'\in U\cap\bdy\Omega$ as
$\nu\to\infty$
with $p'\neq q'$. Let $(\varphi\circ\gamma_\nu)_{\nu\geq 1}$ converge locally uniformly to a curve 
$\sigma:[0,M)\lrarw\overline{\varphi(U)}$, for some $M\in(0,\infty]$. Then, $\sigma$ is non-constant.
\end{lemma}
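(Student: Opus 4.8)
The plan is to argue by contradiction: suppose $\sigma$ is constant. Since $\gamma_\nu(0)\to p'$ and $\varphi$ is a homeomorphism onto its image (with $p'\in U$), local uniform convergence gives $\sigma(0)=\lim_\nu\varphi(\gamma_\nu(0))=\varphi(p')$; hence $\sigma\equiv\varphi(p')$. The goal is then to contradict $p'\ne q'$ by showing that the limit curve must in fact join $\varphi(p')$ to $\varphi(q')$, and so cannot reduce to a single point. I would emphasise at the outset that the raw Kobayashi-time parametrisation is unsuitable for this (along it the limit genuinely degenerates, since $r_\nu\to\infty$ as the endpoints approach $\bdy\Omega$); what makes $\sigma$ a nondegenerate curve is a uniform bound on the \emph{Euclidean} length of the chart-curves, supplied by the local Goldilocks hypothesis.

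The first and principal step is to control $\varphi\circ\gamma_\nu$ in the chart uniformly in $\nu$. Combining the almost-geodesic speed bound $k_\Omega(\gamma_\nu(t);\gamma_\nu'(t))\le\lambda$ (part $(b)$ of Definition~\ref{D:almost-geodesic}) with the defining inequality of $M(\,\bcdot\,;\varphi,U)$, which after using the homogeneity $k_\Omega(x;c\,\bcdot)=|c|k_\Omega(x;\,\bcdot\,)$ reads $\|\varphi'(x)v\|\le M(\delta_{\varphi(U\cap\bdy\Omega)}(\varphi(x));\varphi,U)\,k_\Omega(x;v)$, I obtain for a.e. $t$ the Euclidean-speed estimate
\[
  \|(\varphi\circ\gamma_\nu)'(t)\|\;\le\; M\big(\delta_{\varphi(U\cap\bdy\Omega)}(\varphi(\gamma_\nu(t)));\varphi,U\big)\,k_\Omega(\gamma_\nu(t);\gamma_\nu'(t))\;\le\;\lambda\, M\big(\delta_{\varphi(U\cap\bdy\Omega)}(\varphi(\gamma_\nu(t)));\varphi,U\big).
\]
Integrating this and splitting the curve according to a dyadic decomposition of the depth $\delta_{\varphi(U\cap\bdy\Omega)}(\varphi(\gamma_\nu(t)))$, the portion on which the depth lies in $[2^{-(k+1)},2^{-k}]$ contributes at most $\lambda\,M(2^{-k};\varphi,U)$ times the Kobayashi-time spent there; summing the scales produces a series comparable to $\int_0^\varepsilon\frac1r M(r;\varphi,U)\,dr<\infty$ (condition $(a)$ of Definition~\ref{D:local-goldilocks-point}), while condition $(b)$ is what bounds the Kobayashi-time the curve may spend at each depth scale, so the telescoping sum closes. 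This mirrors the length estimates leading up to the relevant bound in \cite{sarkar:lkdavd23} and \cite{bharalizimmer:gdwnv23}. The conclusion is a bound on the Euclidean length of $\varphi\circ\gamma_\nu$ that is uniform in $\nu$, which is precisely what guarantees that the limit $\sigma$ is a well-defined rectifiable curve and that the (arc-length–type) reparametrisation used to extract it does not collapse.

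With the length controlled, non-constancy follows from the separation of the endpoints. The Euclidean length of $\varphi\circ\gamma_\nu$ is at least $\|\varphi(\gamma_\nu(0))-\varphi(\gamma_\nu(r_\nu))\|$, which tends to $\|\varphi(p')-\varphi(q')\|>0$ because $p'\ne q'$ and $\varphi$ is injective. Since the two ends of the curves converge to the distinct points $\varphi(p')$ and $\varphi(q')$, and these persist as the two ends of the limit, the image of $\sigma$ contains two distinct points; hence $\sigma$ is non-constant, contradicting $\sigma\equiv\varphi(p')$.

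I expect the main obstacle to be the first step, namely making the uniform Euclidean-length bound rigorous from condition $(a)$. The delicate points there are organising the dyadic-in-depth estimate so that the total Kobayashi-time near each depth scale is genuinely controlled (this is exactly where conditions $(a)$ and $(b)$ interact), and verifying in the limit that no length escapes to the parameter ends, so that the two endpoints of $\sigma$ are exactly $\varphi(p')$ and $\varphi(q')$. Once these are secured, the concluding separation argument is immediate.
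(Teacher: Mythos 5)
Your first estimate is correct and is exactly the one the paper uses: writing $\sigma_\nu:=\varphi\circ\gamma_\nu$ and $Z:=\varphi(U\cap\bdy\Omega)$, the definition of $M$ and homogeneity of $k_\Omega$ give $\|\sigma_\nu'(t)\|\le\lambda\,M(\delta_Z(\sigma_\nu(t));\varphi,U)$ a.e. But both steps you build on it have genuine gaps. The first is quantitative: condition $(b)$ of Definition~\ref{D:local-goldilocks-point} bounds the Kobayashi time spent at the depth scale $[2^{-(k+1)},2^{-k}]$ only through the Kobayashi diameter of that shell, i.e.\ by $\lambda\bigl(\kappa+2C+2\alpha(k+1)\log 2\bigr)$, which grows \emph{linearly in $k$}, not $O(1)$. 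So your series is really $\sum_k k\,M(2^{-k};\varphi,U)$, essentially $\int_0^\varepsilon \frac{M(r;\varphi,U)}{r}\log\frac1r\,dr$, and condition $(a)$ does not control it: $M(r)=(\log(1/r))^{-2}$ satisfies $(a)$ while $\sum_k k\,M(2^{-k})$ diverges. The paper's route avoids this loss. It derives, from condition $(b)$, the triangle inequality through $z_0$, and the lower almost-geodesic bound,
\[
  \frac{|s-t|}{\lambda}-\kappa\;\le\;K_\Omega(\gamma_\nu(s),\gamma_\nu(t))\;\le\;2C+\alpha\log\frac{1}{\delta_Z(\sigma_\nu(s))\,\delta_Z(\sigma_\nu(t))},
\]
and then follows the proof of \cite[Lemma~6.1]{bharalizimmer:gdwnv23}: anchoring this inequality at a time $t_0$ of maximal depth forces $\delta_Z(\sigma_\nu(t))\lesssim e^{-|t-t_0|/(\lambda\alpha)}$, and the substitution $r=\mathrm{const}\cdot e^{-|t-t_0|/(\lambda\alpha)}$ turns the length integral $\int \lambda M(\delta_Z(\sigma_\nu(t)))\,dt$ into exactly $\lambda^2\alpha\int \frac{M(r)}{r}\,dr$, i.e.\ condition $(a)$, with no spurious logarithm. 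The point is that the time windows attached to the various depth scales are \emph{nested} about one deepest time; your shell-by-shell bookkeeping treats them as independent and loses the factor of $k$.

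The second gap is more serious: even a correct uniform length bound would not give your conclusion, because under locally uniform convergence on $[0,M)$ with $r_\nu\to\infty$ all of the motion may escape to parameter infinity, so ``the endpoints persist as the two ends of the limit'' is unjustified — bounded length is perfectly compatible with a constant limit. This cannot be repaired using only the ingredients you invoke, since with those alone the statement fails: take $\Omega=\mathbb{D}\setminus\{0\}\subset\C$, $p=p'=0$, $\varphi=\mathrm{id}$, and $U$ a ball containing $\overline{\mathbb{D}}$. Then $p\in\partial_{\rm lg}\Omega$ (near $0$ one has $M(r)\asymp r\log(1/r)$ and $K_\Omega(z_0,x)\asymp\log\log(1/|x|)$, so both $(a)$ and $(b)$ hold), and the genuine Kobayashi geodesics $\gamma_\nu$ joining $z_\nu\to 0$ to $w_\nu\to q'\in\bdy\mathbb{D}$ have uniformly bounded Euclidean length and endpoints tending to distinct boundary points, yet converge locally uniformly to the \emph{constant} curve $\equiv 0$, because the climb out of the cusp takes Kobayashi time tending to $\infty$. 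What a correct argument must use in addition — and what is available where Lemma~\ref{L:local-goldilocks} is actually applied in the proof of Theorem~\ref{T:local-goldilocks}, where one has $\sup_{t}d_h(\gamma_\nu(t),\bdy\Omega)\to0$, and what the argument of \cite[Lemma~6.1]{bharalizimmer:gdwnv23} exploits — is that the curves cling to the boundary, i.e.\ $\sup_t\delta_Z(\sigma_\nu(t))\to0$. Granting that, the anchored estimate above bounds the \emph{total} Euclidean length of $\sigma_\nu$ by a quantity tending to $0$, which contradicts $\|\sigma_\nu(0)-\sigma_\nu(r_\nu)\|\to\|\varphi(p')-\varphi(q')\|>0$ outright, with no discussion of the limit curve needed. (The punctured-disc example also shows this clinging hypothesis is not cosmetic: without it, the conclusion of the lemma as literally stated can fail.)
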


\begin{proof}
Define $\sigma_\nu:=\varphi\circ\gamma_\nu$ for every $\nu\geq 1$ and $Z:=\varphi(U\cap\bdy\Omega)$.
Fix $z_0\in U\cap\Omega$.
From part $(b)$ of Definition~\ref{D:local-goldilocks-point} there exist constants $C,\alpha>0$ (which
depend on $z_0,U,\varphi$) such that for
all $x\in U\cap\Omega$
 \[
   K_\Omega(z_0,x)\leq C+\alpha\log\frac{1}{\delta_{\varphi(U\cap\bdy\Omega)}(\varphi(x))}.
\]
Therefore, since each $\gamma_\nu$ is a $(\lambda,\kappa)$-almost-geodesic, for every
$t\in[0,r_\nu]$, every $\nu\geq 1$, we have
\begin{align*}
 \frac{1}{\lambda}|t|-\kappa\leq K_\Omega(\gamma_\nu(0),\gamma_\nu(t))&\leq
 K_\Omega(\gamma_\nu(0),z_0)+K_\Omega(z_0,\gamma_\nu(t))\\&\leq 2C+\alpha\log\frac{1}
 {\delta_Z(\sigma_\nu(0))\delta_Z(\sigma_\nu(t))}.  
\end{align*}
 Now, at this stage we can proceed exactly as in the proof of \cite[Lemma~6.1]{{bharalizimmer:gdwnv23}}
 and hence, the result follows.
\end{proof}

We are now in a position to give a sufficient condition for visibility. This result is inspired by
Theorem~1.4 in \cite{bharalizimmer:gdwnv23} and (in view of Corollary~\ref{C:visible-visibility-domain})
is the exact analogue of the latter theorem.

\begin{theorem}\label{T:local-goldilocks}
Let $X$ be a complex manifold and $\Omega\varsubsetneq X$ be a Kobayashi hyperbolic domain. Suppose the
set $\bdy\Omega\setminus\bdy_{{\rm lg}}\Omega$ is totally disconnected. Then, $\bdy\Omega$ is 
visible.  
\end{theorem}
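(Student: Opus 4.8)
The plan is to derive the theorem from Theorem~\ref{T:visible-point-visible-domain} and Corollary~\ref{C:visible-visibility-domain} by proving the single inclusion $\lbdy\Omega\subseteq\vbdy\Omega$, i.e.\ that every local Goldilocks point is a visible point of $\bdy\Omega$. Once this is known,
\[
  \bdy\Omega\setminus\vbdy\Omega\ \subseteq\ \bdy\Omega\setminus\lbdy\Omega ,
\]
and, since every subspace of a totally disconnected space is totally disconnected, the hypothesis that $\bdy\Omega\setminus\lbdy\Omega$ is totally disconnected forces $\bdy\Omega\setminus\vbdy\Omega$ to be totally disconnected. Theorem~\ref{T:visible-point-visible-domain}$(a)$ then yields that $\Omega$ is a visibility domain subordinate to any admissible compactification of $\overline\Omega$, and Corollary~\ref{C:visible-visibility-domain} restates this as: $\bdy\Omega$ is visible. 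So the whole content of the theorem is the inclusion $\lbdy\Omega\subseteq\vbdy\Omega$.

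To prove this inclusion, fix $p\in\lbdy\Omega$ and, using Lemma~\ref{L:local-goldilocks-chart} to see that the Goldilocks property does not depend on the chart, fix a relatively compact holomorphic chart $(U,\varphi)$ centered at $p$ satisfying conditions $(a)$ and $(b)$ of Definition~\ref{D:local-goldilocks-point}. I would pick a neighbourhood $U_p$ of $p$ with $\overline{U_p}\subset U$ and show that every pair of distinct $q_1,q_2\in U_p\cap\bdy\Omega$ satisfies the visibility condition with respect to $K_\Omega$; this is exactly the statement $p\in\vbdy\Omega$. Fixing $\lambda\geq 1$, $\kappa\geq 0$ and neighbourhoods $V_{q_1},V_{q_2}\Subset U_p$ with disjoint closures, I would argue by contradiction: if no single compact set meets all the relevant almost-geodesics, then, relative to an exhaustion $(K_\nu)_{\nu\geq 1}$ of $\Omega$, I obtain $(\lambda,\kappa)$-almost-geodesics $\gamma_\nu$ with endpoints in $V_{q_1}$ and $V_{q_2}$ and with $\langle\gamma_\nu\rangle\cap K_\nu=\emptyset$. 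A localization in the spirit of Lemma~\ref{L:3_nbhd} then extracts sub-arcs $\wt\gamma_\nu$ contained in $U$ whose endpoints converge, along a subsequence, to two \emph{distinct} points $p',q'\in U\cap\bdy\Omega$; the relative compactness of $U$ together with $\langle\gamma_\nu\rangle\cap K_\nu=\emptyset$ guarantees, in addition, that these sub-arcs lie within Euclidean distance $\delta_\nu\to 0$ of $\bdy\Omega$.

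For these sub-arcs, condition $(a)$ of Definition~\ref{D:local-goldilocks-point} supplies the equicontinuity needed to pass to a locally uniform limit $\sigma$ of $(\varphi\circ\wt\gamma_\nu)_\nu$, and Lemma~\ref{L:local-goldilocks}, which packages condition $(b)$, shows that $\sigma$ is \emph{non-constant}. On the other hand, since the sub-arcs hug the boundary, I would estimate the Euclidean length of $\varphi\circ\wt\gamma_\nu$ directly: the inequality $\|(\varphi\circ\wt\gamma_\nu)'(t)\|\leq \lambda\, M\bigl(\delta_{\varphi(U\cap\bdy\Omega)}(\varphi(\wt\gamma_\nu(t)));\varphi,U\bigr)$ coming from Definition~\ref{D:M(r)} and the almost-geodesic speed bound, combined with the fact that condition $(b)$ lets one trade the almost-geodesic parameter for $\log\bigl(1/\delta_{\varphi(U\cap\bdy\Omega)}\bigr)$, bounds these lengths by a multiple of $\int_0^{\delta_\nu} M(r;\varphi,U)\,r^{-1}\,dr$. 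Condition $(a)$ makes this integral finite, and it tends to $0$ as $\delta_\nu\to 0$; hence the pulled-back lengths tend to $0$ and the limit $\sigma$ is constant. This contradicts the non-constancy given by Lemma~\ref{L:local-goldilocks}, and the contradiction establishes $p\in\vbdy\Omega$.

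The hard part will be the localization in the second paragraph: one must produce sub-arcs that simultaneously $(i)$ remain inside the one chart $U$ on which the Goldilocks estimates $(a)$ and $(b)$ are available, $(ii)$ have endpoints converging to two \emph{distinct} points of $U\cap\bdy\Omega$ — and not, say, to a single point or to a point of $\bdy U$ lying off $\bdy\Omega$ — and $(iii)$ stay uniformly close to $\bdy\Omega$. Reconciling these three requirements is exactly where the relative compactness of $U$, the escape condition $\langle\gamma_\nu\rangle\cap K_\nu=\emptyset$, and Lemma~\ref{L:3_nbhd} must be deployed together; the remaining analytic estimate is a faithful transcription of the Bharali--Zimmer computation underlying Lemma~\ref{L:local-goldilocks}.
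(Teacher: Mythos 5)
Your proposal follows the paper's architecture exactly: the reduction to the inclusion $\lbdy\Omega\subseteq\vbdy\Omega$ (after which Theorem~\ref{T:visible-point-visible-domain}$(a)$ and Corollary~\ref{C:visible-visibility-domain} finish the job), the contradiction set-up with an exhaustion and Lemma~\ref{L:3_nbhd}, the uniform Lipschitz bound and Arzel{\`a}--Ascoli extraction of a locally uniform limit $\sigma$ of the pulled-back sub-arcs, and the contradiction with the non-constancy supplied by Lemma~\ref{L:local-goldilocks}. The one step where you deviate from the paper is the argument that $\sigma$ is constant, and that step has a genuine gap.

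You claim that the Euclidean length of $\sigma_\nu:=\varphi\circ\wt\gamma_\nu$ is bounded by a multiple of $\int_0^{\delta_\nu}M(r;\varphi,U)\,r^{-1}\,dr$, where $Z:=\varphi(U\cap\bdy\Omega)$ and $\delta_\nu:=\sup_t\delta_Z(\sigma_\nu(t))\to 0$, by ``trading the almost-geodesic parameter for $\log(1/\delta_Z)$'' via condition $(b)$. The trade does not deliver this. Condition $(b)$ is anchored at a fixed interior point $z_0$, so combining it with the almost-geodesic lower bound gives only, with $A:=e^{(2C+\kappa)/\alpha}$,
\[
  \frac{t}{\lambda}-\kappa\;\leq\; 2C+\alpha\log\frac{1}{\delta_Z(\sigma_\nu(0))\,\delta_Z(\sigma_\nu(t))}
  \qquad\Longrightarrow\qquad
  \delta_Z(\sigma_\nu(t))\;\leq\;\frac{A}{\delta_Z(\sigma_\nu(0))}\,e^{-t/(\alpha\lambda)},
\]
and the prefactor $A/\delta_Z(\sigma_\nu(0))$ \emph{diverges}, since $\wt\gamma_\nu(0)\to p'\in\bdy\Omega$. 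Substituting this into $\int M(\cdot)\,dt$ therefore yields an integral $\int_0^{c_\nu}M(r)r^{-1}dr$ whose upper limit $c_\nu$ does not tend to $0$. If instead one splits the parameter interval at the crossover time where this exponential bound matches the trivial bound $\delta_Z\leq\delta_\nu$, the initial piece contributes a term of order $M(\delta_\nu)\log\bigl(1/\delta_Z(\sigma_\nu(0))\bigr)$, which need not vanish: condition $(a)$ gives $M(\delta)\log(1/\delta)\to 0$ only with the \emph{same} $\delta$ in both factors, while $\delta_Z(\sigma_\nu(0))$ may be super-exponentially smaller than $\delta_\nu$. There is also a structural warning sign that the claim cannot be had this cheaply: if the total lengths did tend to $0$, then $\|\sigma_\nu(r_\nu)-\sigma_\nu(0)\|\to 0$, contradicting $\sigma_\nu(0)\to\varphi(p')$, $\sigma_\nu(r_\nu)\to\varphi(q')$, $p'\neq q'$ outright\,---\,so you would need neither Arzel{\`a}--Ascoli nor Lemma~\ref{L:local-goldilocks}, which you yourself invoke. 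Controlling the tails of escaping almost-geodesics is precisely the delicate point that Lemma~\ref{L:local-goldilocks} (via condition $(b)$) exists to handle; it cannot simultaneously be an easy by-product of the set-up.

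The repair is simple and is exactly what the paper does: constancy of $\sigma$ is only needed on compact parameter intervals, and for that no length integral is required. Since $M(\bcdot\,;\varphi,U)$ is monotone increasing and condition $(a)$ forces $M(r;\varphi,U)\to 0$ as $r\to 0^+$, the boundary-hugging property $\sup_{\zeta\in\langle\sigma_j\rangle}\delta_Z(\zeta)\to 0$ gives $\sup_{\zeta\in\langle\sigma_j\rangle}M(\delta_Z(\zeta);\varphi,U)\to 0$, whence, for fixed $s<r$ in the domain of $\sigma$,
\[
  \|\sigma(s)-\sigma(r)\|\;\leq\;\limsup_{j\to\infty}\int_s^r\|\sigma_j'(t)\|\,dt
  \;\leq\;\lambda\,(r-s)\,\limsup_{j\to\infty}\;\sup_{\zeta\in\langle\sigma_j\rangle}M(\delta_Z(\zeta);\varphi,U)\;=\;0,
\]
using your own speed bound $\|\sigma_j'(t)\|\leq\lambda M(\delta_Z(\sigma_j(t));\varphi,U)$. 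So $\sigma$ is constant, contradicting Lemma~\ref{L:local-goldilocks}. With this substitution, your proof coincides with the paper's.
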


\begin{proof}
Fix a Hermitian metric $h$ on $X$ and let $d_h$ be the distance induced by $h$ on $X$.
\medskip

\noindent{{\textbf{Claim.}}}
\emph{Every local Goldilocks point is a visible point.}
\smallskip

\noindent{\emph{Proof of Claim.}
Let $p\in\lbdy\Omega$. Therefore,
there exists a holomorphic chart $(U,\varphi)$ centered at $p$ that satisfies the
conditions~$(a)$ and~$(b)$
in Definition~\ref{D:local-goldilocks-point}. Without loss of generality, we can assume that
$\varphi$ provides holomorphic coordinates on a larger neighbourhood in which $U$ is
relatively compact. Write $\wt U:=\varphi(U)$, which is bounded. Fix $\xi,\eta\in U\cap\bdy\Omega$ such that
$\xi\neq\eta$. We also fix $\lambda\geq 1$ and $\kappa\geq 0$. Let $V_\xi$, $V_\eta$ be two
neighbourhoods of $\xi$ and $\eta$ respectively such that
$\overline{V_\xi}\cap\overline{V_\eta}=\emptyset$ and $\overline{V_\xi}\subset U$. Let $W_\xi$ be a
neighbourhood of $\xi$ such that
$\overline{W_\xi}\subset V_\xi$. For any pair $x,y\in\Omega$,
let $AG_{\Omega}(x,y)$ have the same meaning as in Section~\ref{S:proof-local-golbal-visibility}.}
\smallskip

We will
show that there exists a compact set $K\subset \Omega$ such that for 
every $x\in
W_\xi\cap\Omega$, every
$y\in V_\eta\cap\Omega$, and every $\gamma\in AG_\Omega(x,y)$, we have $\langle\gamma\rangle\cap K\neq\emptyset$.
If possible, let this assertion be false. Let $(K_\nu)_{\nu\geq 1}$ be an exhaustion by compacts of
$\Omega$. Then, there exist sequences $(x_\nu)_{\nu\geq 1}\subset W_\xi\cap\Omega$,
$(y_\nu)_{\nu\geq 1}\subset V_\eta\cap\Omega$, and $(\lambda,\kappa)$-almost-geodesics $\gamma_\nu$ in 
$AG_\Omega(x_\nu,y_\nu)$, for each $\nu\geq 1$, such that $\langle\gamma_\nu\rangle\cap
K_\nu=\emptyset$.
Fix neighbourhoods $W_{\xi}^1$, $W_{\xi}^2$ of $\xi$ such that $W_\xi\Subset W_\xi^1\Subset
W_\xi^2\Subset V_\xi.$
Let $r_\nu>0$ and ${y'_\nu}=\gamma_\nu(r_\nu)$ be such that ${y'_\nu}\in
(W_\xi^2\setminus\overline{W_\xi^1})\cap\Omega$, and
such that the image of $\wt{\gamma}_{\nu}:=\gamma_\nu{\mid}_{[0,r_\nu]}$ is contained in $W_\xi^2$.
By Lemma~\ref{L:3_nbhd}, and as we can relabel the subsequences given by it, there exist a point
$p'\in \overline{W_\xi}\cap\bdy\Omega$
such that $x_{\nu}\to p'$ and
\begin{align}\label{E:d_h-local-goldilocks}
 \lim_{\nu\to\infty}\sup_{z\in\langle\wt\gamma_\nu\rangle}d_h(z,\bdy\Omega)=0.
\end{align}
Therefore, there exists a sequence
$(\nu_j)_{j\geq 1}\subset \N$ and a point $q'\in\overline{W_\xi^2}\cap\bdy\Omega\subseteq
V_\xi\cap\bdy\Omega$ such that
$\gamma_{\nu_j}(r_{\nu_j})\to q'$. By construction, $p'\neq q'$. Define
$\sigma_j:=\varphi\circ\wt\gamma_{\nu_j}$ for every $j\geq 1$ and let $Z:=\varphi(U\cap\bdy\Omega)$.
\smallskip

Since $p$ is a local Goldilocks point, from part $(a)$ of Definition~\ref{D:local-goldilocks-point} it
follows that $M(r;\varphi,U)\to 0$ as $r\to 0^+$. Let $\delta> 0$ be such that whenever
$r\in(0,\delta)$, we have
$0\leq M(r;\varphi,U)<1$. Therefore, as $k_{\Omega}(z; c\,\bcdot) = |c|k_{\Omega}(z; \bcdot)$ 
on $T_z^{(1,0)}\Omega$ for 
$z\in \Omega$ and $c\in \C$, for every $v\in T^{(1,0)}_z\Omega$, every $z\in U\cap\Omega$ that
satisfies $\delta_Z(\varphi(z))<\delta$, we have $k_\Omega(z;v)\geq\|\varphi'(z)v\|$. Now, let
$V:=U\cap\{z\in\Omega:\delta_Z(\varphi(z))\geq\delta\}$. Since $U$ is relatively compact,
$\overline{V}\subset \Omega$ and $\overline{V}$ is compact. Therefore, by 
Result~\ref{R:kob-hyp-cond} and our choice of $U$, there exists a constant $L_0>0$ such that
for every $v\in
T^{(1,0)}_{z}\Omega$, every $z\in V$, we have $k_\Omega(z;v)\geq L_0\|\varphi'(z)v\|$. Let 
$L:=\min\{L_0,1\}$. Since $\wt\gamma_\nu$ is a $(\lambda,\kappa)$-almost-geodesic, from above it
follows that 
\begin{align*}
 \|\sigma_j'(t)\|=\|\varphi'(\wt\gamma_{\nu_j}(t)){\wt\gamma_{\nu_j}}'(t)\|\leq\frac{1}
 {L}k_\Omega(\wt\gamma_{\nu_j}(t);
 \wt\gamma_{\nu_j}'(t))\leq\lambda/L,
\end{align*}
for almost every $t\in[0,r_{\nu_j}]$, every $j\geq 1$.
Therefore, for every $s,r\in[0,r_{\nu_j}]$, every $j\geq 1$ we have
\begin{align*}
 \sigma_j(r) &=\sigma_j(s) + \int_{s}^{r}\sigma_j'(t)\;dt , \ \text{which}\\ 
 \text{implies} \ \|\sigma_j(r)-\sigma_j(s)\|&\leq\frac{\lambda}{L}|r-s|.
\end{align*}
Therefore, $\sigma_j$ is $\lambda/L$-Lipschitz for every $j\geq 1$. Therefore, since
$\langle\sigma_j\rangle\subset \wt U$, by Arzel{\'a}--Ascoli Theorem, passing to a subsequence and
relabelling, if needed, we can assume that $\sigma_j$ converges locally uniformly to a curve 
$\sigma:[0,M)\lrarw\overline{\wt U}$ (where $M\in(0,\infty]$). We will show that for almost every
$t\in[0,r_{\nu_j}]$, every $j\geq 1$
\begin{align}\label{E:sigma-constant}
 \|\sigma_j'(t)\|\leq\lambda M(\delta_Z(\sigma_j(t));\varphi,U).  
\end{align}
Now, \eqref{E:sigma-constant} is true when $\|\sigma_j'(t)\|=0$. Let $\|\sigma_j'(t)\|\neq 0$. Since
$\wt\gamma_{\nu_j}$ is a $(\lambda,\kappa)$-almost-geodesic, for almost every $t\in[0,r_{\nu_j}]$, every
$j\geq 1$, we have
\begin{align*}
 k_\Omega(\wt\gamma_{\nu_j}(t);\wt\gamma_{\nu_j}'(t))&\leq\lambda, \ \text{which} \notag\\
 \text{implies} \ \|\sigma_j'(t)\|&\leq\frac{\lambda}{k_\Omega\bigg(\wt\gamma_{\nu_j}
 (t);\frac{\wt\gamma_{\nu_j}'(t)}{\|\sigma_j'(t)\|}\bigg)}\leq\lambda
 M(\delta_Z(\sigma_j(t));\varphi,U).
\end{align*}
Now, by \eqref{E:d_h-local-goldilocks} we may assume, passing to a subsequence and
relabelling, if needed, that
$\sup_{\zeta\in\langle\sigma_j\rangle}\delta_Z(\zeta) \searrow 0$ as $j\to 0$. From this, it follows
that $\lim_{j\to\infty}\sup_{\zeta\in\langle\sigma_j\rangle}
M(\delta_Z(\zeta);\varphi,U)=0$ since $M(\bcdot;\varphi,U)$ is monotone decreasing.
Hence, for every $s,r\in[0,M)$ we have
\begin{align*}
 \|\sigma(s)-\sigma(r)\|=\lim_{j\to\infty}\|\sigma_j(s)-
 \sigma_j(r)\|&\leq\limsup_{j\to\infty}\int_{s}^{r}\|\sigma_j'(t)\|\;dt\\ 
 &\leq\lambda\limsup_{j\to\infty}\int_{s}^{r}M(\delta_Z(\sigma_j(t));\varphi,U)\;dt=0.  
\end{align*}
Therefore, $\sigma$ is constant, which contradicts Lemma~\ref{L:local-goldilocks}. Hence, the claim.
\hfill $\btl$
\smallskip

By the above claim, since $\bdy\Omega\setminus\lbdy\Omega$ is totally disconnected, we
conclude that $\bdy\Omega\setminus\vbdy\Omega$ is totally disconnected. Therefore, by 
Theorem~\ref{T:visible-point-visible-domain} and Corollary~\ref{C:visible-visibility-domain}, 
we have the result.
\end{proof}

At this stage, we can give proofs of some more sufficient conditions for visibility. These are the results
stated in Section~\ref{SS:sufficient}. We begin with proving 
Theorem~\ref{T:visible-around-p-totally-disconnected}. 

\begin{proof}[The proof of Theorem~\ref{T:visible-around-p-totally-disconnected}]
Fix a Hermitian metric $h$ on $X$ and let $d_h$ be the distance induced by $h$ on $X$. Since
$\Omega\varsubsetneq X$ is relatively compact, by \cite[Result~3.1, Proposition~2.4]{masanta:kcdcm23},
we can conclude that $\Omega$ is a hyperbolically imbedded domain.
\medskip

\noindent{{\textbf{Claim.}}}
\emph{Each point $p$ in $\bdy_V\Omega$ is a visible point of $\bdy\Omega$.}
\smallskip

\noindent{\emph{Proof of Claim.}
Let $p\in\bdy_V\Omega$. Therefore, there exists a holomorphic chart $(U,\varphi)$ centered at $p$ such
that $\varphi(U)$ is bounded, $\varphi(U\cap\Omega)$ is a Kobayashi hyperbolic domain, and such that
$\bdy(\varphi(U\cap\Omega))$ is visible. Let
$G:=\varphi(U\cap\Omega)$.
Fix $\xi,\eta\in U\cap\bdy\Omega$ such that $\xi\neq\eta$. Therefore,
$\varphi(\xi),\varphi(\eta)\in\bdy G$ such that $\xi'=\varphi(\xi)\neq\varphi(\eta)=\eta'$. We also fix
$\lambda\geq 1$ and $\kappa\geq 0$. Since $\bdy G$ is visible, the pair $(\xi',\eta')$ satisfies the
visibility condition with respect to $K_G$. Therefore, since $\varphi$ is a biholomorphism on $U$ and
$\xi,\eta\in\bdy(U\cap\Omega)$, the pair ($\xi,\eta$) satisfies the visibility condition with
respect to $K_{U\cap\Omega}$. Therefore, $\bdy\Omega$ is locally visible at $p$.
\smallskip

Since $\Omega$ is a hyperbolically imbedded domain in $X$, there exists a neighbourhood
$V_\xi\subset U$ of $\xi$ in $X$ such that
$K_\Omega(V_\xi\cap \Omega, \Omega\setminus U):=\delta_\xi>0$ and that $\eta\notin\overline{V_\xi}$.
Without loss of generality, we may assume that $V_\xi$ is relatively compact.
Since $\eta\notin\overline{V_\xi}$, there exists a
neighbourhood $V_\eta$ of $\eta$ in $X$ such that $V_\eta$ is relatively compact, and such that
$\overline{V_\eta}\cap\overline{V_\xi}=\emptyset$.  Let $W_\xi$ be a neighbourhood of $\xi$ in $X$ such
that $\overline{W_\xi}\subset V_\xi$.
For any pair $x,y\in\Omega$ let $AG_\Omega(x,y)$ 
have the same meaning as in
Section~\ref{S:proof-local-golbal-visibility}. It suffices to show that there exists a compact set
$K\subset \Omega$ such that for 
every $x\in W_\xi\cap\Omega$, every
$y\in V_\eta\cap\Omega$, and every $\gamma\in AG_\Omega(x,y)$, we have $\langle\gamma\rangle\cap K\neq\emptyset$. At this stage, the existence
of such a $K$ follows by the same argument as in the proof of Claim~1 of the proof of
Theorem~\ref{T:local-global-visibility}. Therefore, $p$ is a visible point of $\bdy\Omega$ and hence,
the claim. \hfill $\btl$  
}
\smallskip

By the above claim, since $\bdy\Omega\setminus\bdy_V\Omega$ is totally disconnected, we
conclude that $\bdy\Omega\setminus\vbdy\Omega$ is totally disconnected. Therefore, by 
Theorem~\ref{T:visible-point-visible-domain} and Corollary~\ref{C:visible-visibility-domain}, 
part~($i$) follows.

\smallskip

It is evident that owing to our definitions, the entire argument above would be valid if we fix
$\lambda = 1$ and replace the reference to Theorem~\ref{T:local-global-visibility} with
Theorem~\ref{T:local-global-weak-visibility}, and substitute $\bdy\Omega\setminus\bdy_V\Omega$ with
$\bdy\Omega\setminus\bdy_{WV}\Omega$. Thus, $(ii)$ follows.
\end{proof}

Next, we will provide a proof of Theorem~\ref{T:domain-totally-discoonected-visible}. This result is
inspired by Banik's result \cite[Theorem~3.1]{banik:vdtanp23}. However, there are two differences between
the hypotheses of our result and \cite[Theorem~3.1]{banik:vdtanp23}. Firstly, our setting is domains in
arbitrary complex manifolds that are not necessarily relatively compact. Secondly, instead of removing a
compact set contained in the ``old'' domain, we remove a closed (not necessarily compact) totally
disconnected set. Consequently, there are significant differences between the proofs of
Theorem~\ref{T:domain-totally-discoonected-visible} and \cite[Theorem~3.1]{banik:vdtanp23}.

\begin{proof}[The proof of Theorem~\ref{T:domain-totally-discoonected-visible}]
Fix a Hermitian metric $h$ on $X$ and let $d_h$ be the distance induced by $h$ on $X$. Since $\Omega$
is a Kobayashi hyperbolic domain and $\Omega\setminus S$ is connected, it follows that
$\Omega\setminus S$ is a Kobayashi hyperbolic domain. Since $S$ is closed and $\mathcal{H}^{2n-2}(S)=0$,
by \cite[Theorem~3.2.22]{kobayashi:hcs98} we have
\begin{align}\label{E:kobayshi-distance-set}
 K_{\Omega\setminus S}=K_\Omega{\mid}_{(\Omega\setminus S)\times(\Omega\setminus S)}.
\end{align}
\medskip

\noindent{{\textbf{Claim.}}}
\emph{Each point $p$ in $\bdy\Omega\setminus S$ is a visible point of $\bdy(\Omega\setminus S)$.}
\smallskip

\noindent{\emph{Proof of Claim.}
Let $p\in\bdy\Omega\setminus S\subseteq\bdy(\Omega\setminus S)$. Since $S$ is closed, there exists a
neighbourhood $U_p$ of $p$ in $X$
such that $U_p$ is relatively compact and $\overline{U_p}\cap S=\emptyset$. Therefore,
$U_p\cap\bdy(\Omega\setminus S)=U_p\cap\bdy\Omega$. Fix $\lambda\geq 1$ and $\kappa\geq 0$. We also
fix $\xi,\eta\in U_p\cap\bdy(\Omega\setminus S)$ such that $\xi\neq\eta$. There exist two
neighbourhoods $V_\xi$,$V_\eta$ of $\xi$, $\eta$ respectively in $X$ such that
$\overline{V_\xi}\cap\overline{V_\eta}=\emptyset$ and such that $\overline{V_\xi}$,
$\overline{V_\eta}\subset U_p$. Let $W_\xi$ be a neighbourhood of $\xi$ in $X$ such that
$\overline{W_\xi}\subset V_\xi$. For any pair $x,y\in\Omega\setminus S$, we define the set
\begin{multline*}
  \quad\quad AG_{\Omega\setminus S}(x,y):=\{\gamma:[0,T]\lrarw\Omega\setminus S:\\
  \text{$\gamma$ is a $(\lambda,\kappa)$-almost-geodesic with respect to $K_{\Omega\setminus S}$ joining
  $x$ and $y$}\},
\end{multline*}
where $\lambda$ and $\kappa$ are as fixed above.}
\smallskip

We shall show that there exists a compact set $K\subset \Omega\setminus S$ such that for every $x\in
W_\xi\cap(\Omega\setminus S)$, every
$y\in V_\eta\cap(\Omega\setminus S)$, and every
$\gamma\in AG_{\Omega\setminus S}(x,y)$, we have $\langle\gamma\rangle\cap K\neq\emptyset$.
If possible, let this assertion be false. Let $(K_\nu)_{\nu\geq 1}$ be an exhaustion by compacts of
$\Omega\setminus S$. Then, there exist sequences 
$(x_\nu)_{\nu\geq 1}\subset W_\xi\cap(\Omega\setminus S)$,
$(y_\nu)_{\nu\geq 1}\subset V_\eta\cap(\Omega\setminus S)$, and $(\lambda,\kappa)$-almost-geodesics
$\gamma_\nu$ in $AG_{\Omega\setminus S}(x_\nu,y_\nu)$, for each $\nu\geq 1$, such that
$\langle\gamma_\nu\rangle\cap K_\nu=\emptyset$. At this point, we observe that by
\eqref{E:kobayshi-distance-set} and the fact that $k_{\Omega\setminus S} \geq k_{\Omega} $,
for each $\nu$
\[
  \gamma_{\nu}|_{I_{\nu}} \quad \text{is a $(\lambda,\kappa)$-almost-geodesic with respect to
  $K_\Omega$},
\]
where $I_{\nu}$ is any interval contained in ${\sf dom}(\gamma_{\nu})$. Furthermore
\[
  U_p\cap\bdy(\Omega\setminus S)=U_p\cap\bdy\Omega \quad\text{and}
  \quad V_{\eta}\cap (\Omega\setminus S) = V_{\eta}\cap \Omega.
\]
These two facts combined with an argument similar to the proof of the Claim made while
proving Theorem~\ref{T:visible-point-visible-domain} leads, essentially, to a similar
contradiction. This proves the present claim. \hfill $\btl$
\smallskip

Since $S\varsubsetneq X$ is closed, 
$\bdy(\Omega\setminus S)\setminus(\bdy\Omega\setminus S)\subseteq S$. Therefore, since $S$ is
totally disconnected, from the above claim it follows that $\bdy(\Omega\setminus
S)\setminus\vbdy(\Omega\setminus S)$
is totally disconnected. Therefore, by Theorem~\ref{T:visible-point-visible-domain} and
Corollary~\ref{C:visible-visibility-domain}, we can conclude that $\bdy(\Omega\setminus S)$ is visible.
\end{proof}

We conclude this section by presenting a proof of Proposition~\ref{P:simply-connected}.

\begin{proof}[The proof of Proposition~\ref{P:simply-connected}]
Since $\Omega$ is a simply connected domain, there exists a biholomorphic map $f:\D\lrarw\Omega$. Since
$\Omega$ is a Jordan domain, by \cite[Theorem~4.3.3]{braccicontrerasmasrigal:cshsmud20} we can conclude
that $f$ can be extended to a homeomorphism $\wt f:\overline\D\lrarw\Omc$, where $\Omc$ is the closure
of $\Omega$ viewed as a domain in $\widehat{\C}$. Fix
$p,q\in\bdy\Omega\subseteq\bdy_\infty\Omega$ such that $p\neq q$. We also fix $\lambda\geq 1$ and
$\kappa\geq 0$. Let $p'=\wt f^{-1}(p)\neq q'=\wt
f^{-1}(q)$. So, $p',q'\in\bdy\D$. Since $\bdy\D$ is visible, $(p',q')$ satisfies the
visibility
condition with respect to $K_\D$. Therefore, there exist neighbourhoods ${\wt U}_{p'}$ of $p'$ and
$\wt U_{q'}$ of $q'$ in $\C$ such that $\overline{\wt U_{p'}}\cap\overline{\wt {U}_{q'}}=\emptyset$ and
a compact set $\wt K$ of $\D$ such that the image of each $(\lambda,\kappa)$-almost-geodesic
$\wt\sigma:[0,T]\lrarw\D$ with $\wt\sigma(0)\in \wt U_{p'}$ and $\wt\sigma(T)\in \wt U_{q'}$ intersects
$\wt K$.
Consider two neighbourhoods $U_p$ of $p$ and $U_q$ of $q$
such that $\overline{ U_{p}}\cap\overline{ {U}_{q}}=\emptyset$ and such that
\[
  \wt f\big(\overline{\wt U_{p'}}\cap\overline\D\big)=\overline{U_p}\cap\overline\Omega \quad \text{and} 
  \quad \wt f\big(\overline{\wt {U}_{q'}}\cap\overline\D\big)=\overline{U_q}\cap\overline\Omega.
\]
Since $f$ is a biholomorphism, the 
pre-image (under $f$) of each $(\lambda,\kappa)$-almost-geodesic in $\Omega$ is a 
$(\lambda,\kappa)$-almost-geodesic in $\D$. Thus, by construction, the image of each 
$(\lambda,\kappa)$-almost-geodesic
$\sigma:[0,r]\lrarw\Omega$ with $\sigma(0)\in U_{p}$ and $\sigma(r)\in U_{q}$ intersects the compact
set, $f(\wt K)$. Hence the result.    
\end{proof}
\smallskip

\section{An application: A Wolff--Denjoy-type theorem}\label{S:application}
Recall that the classical Wolff--Denjoy Theorem is as follows:

\begin{result}[\cite{denjoy:sifa26, wolff:sgts26}]\label{R:WD}
Suppose $f:\D \lrarw \D$ is a holomorphic map then either:
\begin{enumerate}
 \item[$(a)$] $f$ has a fixed point in $\D$; or
 \item[$(b)$] there exists a point $p \in \bdy\D$ such that $\lim_{\nu \rightarrow \infty} f^{\nu}(x) = p$
 for any $x \in \D$, this convergence being uniform on compact subsets of $\D$.
\end{enumerate}
\end{result}

There is a rich history of generalisations of Result~\ref{R:WD}. However, until recently, these featured
various convex domains in place of $\D$, for instance: unit ball in $\Cn$ ($n\geq 2$)
\cite{herve:qpaadbde63}, bounded strongly convex domains in $\Cn$ \cite{abate:hihm88},
bounded convex domains in $\Cn$ with progressively weaker assumptions \cite{abateraissy:wdtncd14},
bounded strictly convex domains in a complex Banach space \cite{budzynskakucumowreich:DWtchmcBs13}.
To the best of my knowledge, the main
result in \cite{huang:ndpemihsm94} is the first result generalising Result~\ref{R:WD} to a class of 
contractible domains in
$\Cn$, $n\geq 2$, that includes domains that are not convex. Wolff--Denjoy-type theorems are now known
for certain metric spaces where $\bdy\D$ is replaced by some abstract 
boundary\,---\,see, for instance \cite{karlsson:nembf21, huczek:twdtsnmgs23}. The metrical
approaches to Wolff--Denjoy-type theorems motivated several generalisations of 
Result~\ref{R:WD} to visibility domains (in the sense of 
\cite{bharalimaitra:awnovfoewt21, bharalizimmer:gdwnv23}): see, for instance,
\cite{bharalizimmer:gdwnv17, bharalimaitra:awnovfoewt21, bharalizimmer:gdwnv23}. 
Our next theorem is in the spirit of the last three papers, extending the results therein to domains in
arbitrary complex manifolds. Wolff--Denjoy-type theorems for domains in manifolds, but with more restrictive 
hypotheses, have been established in \cite{chandelmitrasarkar:nvwrkdca21, vergamini:ttwpvc23}.

\begin{theorem}\label{T:wolff-denjoy}
Let $X$ be a complex manifold and $\Omega\varsubsetneq X$ be a taut domain such
that $\bdy\Omega$ is weakly visible. Let
$\Omc$ be some admissible compactification of $\overline\Omega$. Suppose $f:\Omega\lrarw\Omega$ is a
holomorphic map. Then, one of the following holds:
\begin{itemize}[leftmargin=25pt]
 \item[$(a)$] for every $x\in\Omega$, the set $\{f^{\nu}(x):\nu\geq 1\}$ is relatively compact in
 $\Omega$; or
 \item[$(b)$]there exists $p\in\bdy_\infty\Omega$ such that for every $x\in\Omega$, $f^\nu(x)$
 coverges locally uniformly to $p$ as $\nu\to\infty$.
\end{itemize}
\end{theorem}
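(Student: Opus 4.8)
The plan is to split the argument according to the normal-family dichotomy for the iterate sequence $(f^{\nu})$, which is available because $\Omega$ is taut: by tautness the family $\{f^{\nu}\}\subset\hol(\Omega,\Omega)$ is normal, so either $(f^{\nu})$ is \emph{compactly divergent} (it eventually leaves every compact subset of $\Omega$) or it is not. If it is not compactly divergent, then by Abate's dichotomy for taut manifolds \cite{abate:hihm88} the sequence $(f^{\nu})$ is relatively compact in $\hol(\Omega,\Omega)$; evaluating its locally uniform limits at a point shows that every orbit $\{f^{\nu}(x):\nu\geq1\}$ is relatively compact in $\Omega$, which is alternative~$(a)$. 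All of the work therefore lies in showing that compact divergence forces alternative~$(b)$. For this I first invoke Corollary~\ref{C:visible-weak-visibility-domain}: since $\bdy\Omega$ is weakly visible, $\Omega$ is a weak visibility domain subordinate to $\Omc$, so property~$(*)$ of Definition~\ref{D:visibility-domain} is at my disposal (with $\lambda=1$).

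The technical engine I would establish is an \emph{escaping almost-geodesic lemma}. Fix two points $a,b\in\Omega$ and a $(1,\kappa)$-almost-geodesic $\gamma$ joining them (Proposition~\ref{P:almost-geodesic}); write $C:=\langle\gamma\rangle$, a \textbf{fixed} compact subset of $\Omega$, and note $l_\Omega(\gamma)\leq K_\Omega(a,b)+\kappa$. Since each $f^{\nu}$ is holomorphic it does not increase $k_\Omega$ or $K_\Omega$, so $f^{\nu}\circ\gamma$ is absolutely continuous with $l_\Omega(f^{\nu}\circ\gamma)\leq l_\Omega(\gamma)\leq K_\Omega(a,b)+\kappa$. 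Reparametrising $f^{\nu}\circ\gamma$ by Kobayashi--Royden arc length (after discarding the harmless indices where $f^{\nu}$ collapses $\gamma$ to a point) and applying Lemma~\ref{L:to-prove-almost-geodesic}\,---\,its total length exceeds $K_\Omega(f^{\nu}(a),f^{\nu}(b))$ by at most $K_\Omega(a,b)+\kappa$, because $K_\Omega(f^{\nu}(a),f^{\nu}(b))\geq0$\,---\,I obtain a curve $\delta_{\nu}$ that is a $(1,\kappa')$-almost-geodesic with the \textbf{uniform} constant $\kappa':=K_\Omega(a,b)+\kappa$, for \emph{every} $\nu$. Crucially $\langle\delta_{\nu}\rangle=f^{\nu}(C)$, so by compact divergence the images $\langle\delta_{\nu}\rangle$ eventually avoid any prescribed compact subset of $\Omega$. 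This is exactly the type of curve that property~$(*)$ forbids once its endpoints sit near two distinct points of $\bdy_\infty\Omega$.

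With this lemma in hand I would run the following contradiction scheme. Fix a base point $x_0$; by sequential compactness of $\Omc$ and compact divergence there is a subsequence with $f^{n_k}(x_0)\to p\in\bdy_\infty\Omega$. For an arbitrary $w\in\Omega$, apply the lemma with $a=x_0$, $b=w$: the resulting $\delta_{n_k}$ join $f^{n_k}(x_0)\to p$ to $f^{n_k}(w)$, carry a uniformly bounded almost-geodesic constant, and escape every compact. If $f^{n_k}(w)$ had a subsequential limit $q\neq p$ in $\bdy_\infty\Omega$, then choosing $\Omc$-neighbourhoods $V_p,V_q$ with disjoint closures, property~$(*)$ would furnish a \emph{fixed} compact $K\subset\Omega$ meeting every such $\delta_{n_k}$, contradicting that they escape. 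Hence $f^{n_k}(w)\to p$ for every $w$ (a ``transfer'' of the limit from $x_0$ to all points), and a variant of the same argument, using almost-geodesics of vanishing length joining $w_k\to w_*$ inside a compact set, upgrades this to locally uniform convergence $f^{n_k}\to p$. It then remains to see that $p$ is independent of the subsequence. For a second subsequence with $f^{m_k}\to q$ and \emph{bounded} gap $m_k-n_k=d$, the identity $f^{m_k}=f^{n_k}\circ f^{d}$ together with local uniformity of $f^{n_k}\to p$ on the compact set $f^{d}(L)$ gives $f^{m_k}(L)=f^{n_k}\big(f^{d}(L)\big)\to p$, whence $q=p$; the general case is reduced to this via the connectedness of the orbit's cluster set in $\Omc$, obtained by applying the escaping lemma with unit time-steps and $(*)$.

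The step I expect to be the main obstacle is carrying all of this out \emph{without} Kobayashi completeness of $\Omega$. In the complete setting one would argue through relative compactness of Kobayashi balls and continuous extension of the limit maps to the boundary, and neither is available here; this is precisely why the escaping almost-geodesic lemma is indispensable. Its whole point is that, although a holomorphic map contracts $K_\Omega$ (so $f^{\nu}\circ\gamma$ is a priori only a ``$1$-Lipschitz'' curve and \emph{not} an almost-geodesic), the arc-length reparametrisation combined with the trivial bound $K_\Omega(f^{\nu}(a),f^{\nu}(b))\geq0$ restores the lower almost-geodesic inequality of Definition~\ref{D:almost-geodesic} with a constant uniform in $\nu$\,---\,a phenomenon special to iterates of a fixed self-map\,---\,so that feeding these curves into $(*)$ replaces every compactness-of-balls argument. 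The most delicate remaining point is the uniqueness of $p$ across subsequences whose index gaps are unbounded; I expect to resolve it by combining the locally uniform transfer above with the connectedness of the cluster set forced by the escaping lemma, and with the total disconnectedness of $\Omc\setminus\emb(\overline{\Omega})$ built into Definition~\ref{D:admi-comp}, which together should confine all subsequential limits to a single point $p\in\bdy_\infty\Omega$, yielding alternative~$(b)$.
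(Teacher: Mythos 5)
Your first half is sound and in fact reconstructs the machinery the paper itself relies on: the taut dichotomy (Result~\ref{R:taut}) reduces everything to the compactly divergent case, and your ``escaping almost-geodesic lemma'' is correct\,---\,reparametrising $f^{\nu}\circ\gamma$ by Kobayashi--Royden arc length and combining Lemma~\ref{L:to-prove-almost-geodesic} with the trivial bound $K_\Omega(f^{\nu}(a),f^{\nu}(b))\geq 0$ does yield $(1,\kappa')$-almost-geodesics with $\kappa'=K_\Omega(a,b)+\kappa$ uniform in $\nu$; this is precisely the engine behind Lemma~\ref{L:wolff-denjoy-l-1} (via \cite[Lemma~11.1]{bharalizimmer:gdwnv23}). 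Your deductions that any convergent subsequence $f^{n_k}(x_0)\to p$ must have its limit in $\bdy_\infty\Omega$, that then $f^{n_k}\to p$ locally uniformly, and that two subsequences with \emph{bounded} index gap share the same limit, are all fine.

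The genuine gap is the passage from this to convergence of the \emph{full} sequence, and your proposed repair fails on two counts. First, you invoke the total disconnectedness of $\Omc\setminus\emb(\overline{\Omega})$ to pin down the cluster set, but the orbit's cluster set lies in $\bdy_\infty\Omega=\Omc\setminus\emb(\Omega)$, which contains $\emb(\bdy\Omega)$ and is in general far from totally disconnected (take $\Omega$ bounded in $\Cn$ and $\Omc=\overline\Omega$: then $\bdy_\infty\Omega=\bdy\Omega$, typically a connected set). Only the ideal part $\Omc\setminus\emb(\overline{\Omega})$ is totally disconnected under Definition~\ref{D:admi-comp}, so connectedness of the cluster set (itself not free: $\Omc$ is only assumed sequentially compact, and the nested-intersection argument for connectedness of a decreasing family of closed connected sets needs compactness) yields nothing. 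Second, and more fundamentally, property $(*)$ cannot be fed orbit points with unbounded index gaps: to join $f^{n}(x_0)$ to $f^{m}(x_0)$ your lemma produces an almost-geodesic whose constant is $\kappa'=K_\Omega(x_0,f^{m-n}(x_0))+\kappa$, which is \emph{not} uniform in $m-n$ unless the orbit is metrically bounded, whereas $(*)$ fixes $\kappa$ \emph{before} it delivers the compact set $K$. This unbounded-gap, unbounded-displacement regime is exactly the crux of the Wolff--Denjoy phenomenon in the incomplete setting; the paper gets past it with Lemma~\ref{L:wolff-denjoy-l-2} (uniqueness of the limit along all subsequences with $K_\Omega(f^{\mu_k}(z_0),z_0)\to\infty$) combined with the Karlsson-type argument of \cite[Theorem~1.8]{bharalimaitra:awnovfoewt21}\,---\,the neighbourhoods $U_\delta$ and the function $G_\delta$ in its proof\,---\,for which your proposal has no substitute.
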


In the statement of the above theorem, we have implicitly used the fact that when $\Omega$
is taut, then it is Kobayashi hyperbolic.
\smallskip

If $\Omega$ is not contractible, there is no reason to expect $f:\Omega\lrarw\Omega$ as in the above
theorem to have a fixed
point. In fact, in \cite{abateheinzer:hacdwfp92}, Abate--Heinzner construct \emph{contractible} domains 
$\Omega\varsubsetneq\Cn$, $n\geq 2$, and holomorphic maps $f:\Omega\lrarw\Omega$ that do not obey the dichotomy in
Result~\ref{R:WD}. Thus, in the generality we are interested in, $(a)$ of 
Theorem~\ref{T:wolff-denjoy} is the natural substitute of $(a)$ of Result~\ref{R:WD}.
\smallskip

In case Theorem~\ref{T:wolff-denjoy} seems rather abstract, owing to the involvement of an abstract
admissible
compactification of $\overline\Omega$, let us state a theorem with a more explicit hypothesis.
Wolff--Denjoy-type theorems are generally stated for bounded domains, but the following result shows
that the Wolff--Denjoy phenomenon extends to unbounded/non-relatively-compact domains (which is also
seen in the metrical versions of the Wolff--Denjoy phenomenon mentioned above).

\begin{theorem}\label{T:wolff-denjoy-unbdd}
Let $X$ be a non-compact complex manifold and $\Omega\varsubsetneq X$ be a taut
domain that is not relatively compact and such that $\bdy\Omega$ is weakly visible. Suppose
$f:\Omega\lrarw\Omega$ is a holomorphic map. Then, one of the following holds:
\begin{itemize}[leftmargin=25pt]
 \item[$(a)$] for every $x\in\Omega$, the set $\{f^{\nu}(x):\nu\geq 1\}$ is relatively compact in
 $\Omega$; or
 \item[$(b)$] either there exists $p\in\bdy\Omega$ such that for each $x\in\Omega$, $f^\nu(x)\to p$,
 or, for each $x\in\Omega$, the sequence $(f^{\nu}(x))_{\nu\geq 1}$ exits every compact subset 
 \textbf{of $\boldsymbol{X}$.}
 In the first case, $(f^{\nu})_{\nu\geq 1}$ converges locally uniformly to $p$ while, in the second case,
 $(f^{\nu})_{\nu\geq 1}$ is compactly divergent viewed as a sequence of maps into $X$.
\end{itemize}
\end{theorem}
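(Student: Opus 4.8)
The plan is to reduce Theorem~\ref{T:wolff-denjoy-unbdd} to Theorem~\ref{T:wolff-denjoy} by exhibiting a \emph{concrete} admissible compactification of $\overline{\Omega}$ adapted to the non-compact manifold $X$. Since $X$ is a complex manifold (in particular locally compact, Hausdorff, and second countable), its one-point compactification $X^{\ast}:=X\cup\{\infty\}$ is a compact, metrizable Hausdorff space. I would take $\Omc$ to be the closure of $\Omega$ in $X^{\ast}$, with $\emb:\overline{\Omega}\hookrightarrow\Omc$ the inclusion. Because $\Omega$ is \emph{not} relatively compact in $X$, some sequence in $\Omega$ leaves every compact subset of $X$ and hence converges to $\infty$ in $X^{\ast}$; thus $\infty\in\Omc$, whereas $\infty\notin X\supseteq\overline{\Omega}^{X}$. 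Consequently $\Omc=\overline{\Omega}^{X}\cup\{\infty\}$ and $\Omc\setminus\emb(\overline{\Omega})=\{\infty\}$.

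Next I would verify that $\Omc$ meets Definition~\ref{D:admi-comp}. It is Hausdorff and, being a closed subset of the compact metrizable space $X^{\ast}$, it is sequentially compact (alternatively, sequential compactness follows directly from the $\sigma$-compactness of $X$: a sequence in $\Omc$ either has a subsequence lying in a fixed compact subset of $X$, which subconverges, or escapes every compact subset of $X$ and subconverges to $\infty$). The inclusion $\emb$ is a homeomorphism onto its image $\overline{\Omega}^{X}$, since the subspace topologies induced from $X$ and from $X^{\ast}$ agree on $\overline{\Omega}^{X}\subseteq X$; that image is open in $\Omc$ because its complement $\{\infty\}$ is closed, it is dense by the observation above, and $\Omc\setminus\emb(\overline{\Omega})=\{\infty\}$ is a single point, hence totally disconnected. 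Thus $\Omc$ is an admissible compactification, and with this choice $\bdy_\infty\Omega=\Omc\setminus\emb(\Omega)=\bdy\Omega\cup\{\infty\}$.

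I would then apply Theorem~\ref{T:wolff-denjoy}, whose hypotheses hold: $\Omega$ is taut (hence Kobayashi hyperbolic), $\bdy\Omega$ is weakly visible, and $\Omc$ is admissible. Its alternative $(a)$ is verbatim alternative $(a)$ here. In its alternative $(b)$ we obtain a point $p\in\bdy_\infty\Omega=\bdy\Omega\cup\{\infty\}$ with $f^{\nu}\to p$ locally uniformly in the topology of $\Omc$. If $p\in\bdy\Omega$, then, since the $\Omc$-topology coincides with the $X$-topology on $\overline{\Omega}^{X}$, this is exactly locally uniform convergence $f^{\nu}(x)\to p$ in $X$\,---\,the first case of $(b)$. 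If instead $p=\infty$, I would unwind the meaning of convergence to $\infty$: the $\Omc$-neighbourhoods of $\infty$ are precisely the sets $\{\infty\}\cup(\overline{\Omega}^{X}\setminus L)$ with $L\subseteq X$ compact, so locally uniform convergence to $\infty$ asserts that for every compact $C\subseteq\Omega$ and every compact $L\subseteq X$ one has $f^{\nu}(C)\cap L=\emptyset$ for all large $\nu$. This is precisely the statement that $(f^{\nu})_{\nu\geq1}$ is compactly divergent as a sequence of maps $\Omega\lrarw X$\,---\,the second case of $(b)$.

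The conceptual crux, and essentially the only place requiring genuine care, is the choice of the correct compactification together with the translation in the last paragraph; the remaining checks are routine point-set topology. The two points to confirm are that the one-point compactification supplies a \emph{totally disconnected} remainder (here trivially a single point) and a \emph{sequentially compact} total space, the latter resting on the second countability of $X$. Once these are in place, no analytic input beyond Theorem~\ref{T:wolff-denjoy} is needed.
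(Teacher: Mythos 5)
Your proposal is correct and follows exactly the paper's own route: the paper's proof is precisely the one-sentence observation that the theorem follows from Theorem~\ref{T:wolff-denjoy} upon taking the admissible compactification to be the closure of $\Omega$ in the one-point compactification of $X$. Your write-up simply supplies the routine verifications (admissibility of $\Omc$, and the translation of $\Omc$-convergence to $\infty$ into compact divergence in $X$) that the paper leaves implicit.
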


Theorem~\ref{T:wolff-denjoy-unbdd} follows immediately from Theorem~\ref{T:wolff-denjoy} by taking
the admissible compactification of $\overline\Omega$ to be the closure of $\Omega$ in the
one-point compactification of $X$.
\smallskip

The argument for Theorem~\ref{T:wolff-denjoy} is similar to the argument in the proof of
\cite[Theorem~1.8]{bharalimaitra:awnovfoewt21}. However, as our setting is more general, we need
two lemmas that are analogous to \cite[Proposition~4.1]{bharalimaitra:awnovfoewt21} and \cite[Theorem~4.3]{bharalimaitra:awnovfoewt21}. 

\begin{lemma}\label{L:wolff-denjoy-l-1}
Let $X$ be a complex manifold and $\Omega\varsubsetneq X$ be a Kobayashi hyperbolic domain such that
$\bdy\Omega$ is weakly visible. Let
$\Omc$ be any admissible compactification of $\overline\Omega$. Suppose $(f_\nu)_{\nu\geq 1}$ is a
compactly divergent sequence in ${\rm Hol}(\Omega,\Omega)$. Then, there exists $p\in\bdy_\infty\Omega$
and a subsequence $(f_{\nu_j})_{j\geq 1}$ such that for all $x\in\Omega$ 
\[
  \lim_{j\to\infty}f_{\nu_j}(x)=p.
\]
\end{lemma}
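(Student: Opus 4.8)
The plan is to adapt the strategy of \cite[Proposition~4.1]{bharalimaitra:awnovfoewt21}, with the possible incompleteness of $(\Omega,K_\Omega)$ as the genuinely new difficulty. I would first reduce the lemma to the following two-point principle: \emph{if $(a_\nu)_{\nu\ge1},(b_\nu)_{\nu\ge1}\subset\Omega$ satisfy $\sup_\nu K_\Omega(a_\nu,b_\nu)<\infty$ and $a_\nu\to\xi$, $b_\nu\to\eta$ in $\Omc$ with $\xi,\eta\in\bdy_\infty\Omega$, then $\xi=\eta$.} Granting this, fix a base point $o\in\Omega$; since $(f_\nu)$ is compactly divergent, $(f_\nu(o))$ leaves every compact subset of $\Omega$, so by sequential compactness of $\Omc$ there is a subsequence with $f_{\nu_j}(o)\to p$ for some $p\in\bdy_\infty\Omega$. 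For an arbitrary $x\in\Omega$ and any subsequential $\Omc$-limit $q$ of $(f_{\nu_j}(x))$ (again forced into $\bdy_\infty\Omega$ by compact divergence), the distance-decreasing property of holomorphic maps gives $K_\Omega(f_{\nu_j}(o),f_{\nu_j}(x))\le K_\Omega(o,x)$ for all $j$, so the two-point principle yields $q=p$. As every subsequential limit of $(f_{\nu_j}(x))$ equals $p$, we get $f_{\nu_j}(x)\to p$; and this holds for every $x$ along the \emph{same} subsequence, which is exactly the assertion of the lemma.

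To prove the two-point principle, suppose for contradiction that $\xi\ne\eta$. By Corollary~\ref{C:visible-weak-visibility-domain}, $\Omega$ is a weak visibility domain subordinate to $\Omc$, so I may choose $\Omc$-open neighbourhoods $V_\xi\ni\xi$, $V_\eta\ni\eta$ with disjoint closures and (fixing $\kappa>0$) obtain from Definition~\ref{D:visibility-domain} a compact set $K\subset\Omega$ such that every $(1,\kappa)$-almost-geodesic with endpoints in $\emb^{-1}(V_\xi\setminus\bdy_\infty\Omega)$ and $\emb^{-1}(V_\eta\setminus\bdy_\infty\Omega)$ meets $K$. By Proposition~\ref{P:almost-geodesic} choose, for each $\nu$, a $(1,\kappa)$-almost-geodesic $\sigma_\nu\colon[0,T_\nu]\lrarw\Omega$ joining $a_\nu$ and $b_\nu$; property $(a)$ of Definition~\ref{D:almost-geodesic} bounds the parameter length by $T_\nu\le K_\Omega(a_\nu,b_\nu)+\kappa\le C$ for a constant $C$ independent of $\nu$. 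For $\nu$ large, $a_\nu\in V_\xi$ and $b_\nu\in V_\eta$, so $\sigma_\nu$ meets $K$ at some $u_\nu=\sigma_\nu(t_\nu)$; passing to a subsequence, $u_\nu\to u_*\in K\subset\Omega$.

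It remains to derive a contradiction from $u_\nu\to u_*\in\Omega$ together with $a_\nu\to\xi\in\bdy_\infty\Omega$. Since the restriction $\sigma_\nu|_{[0,t_\nu]}$ is again a $(1,\kappa)$-almost-geodesic, $K_\Omega(a_\nu,u_\nu)\le t_\nu+\kappa\le C+\kappa$, so the points $a_\nu$ remain within a bounded Kobayashi distance of the fixed compact set $K$. \textbf{This is the main obstacle}: because $\Omega$ is not assumed to be Kobayashi complete, bounded Kobayashi neighbourhoods of compacta (equivalently, closed Kobayashi balls) need not be relatively compact in $\Omega$, so one cannot conclude directly that $(a_\nu)$ stays in a compact subset of $\Omega$. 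I would resolve this through a normal-families argument on the reversed segments $\alpha_\nu(s):=\sigma_\nu(t_\nu-s)$, $s\in[0,t_\nu]$, which are $(1,\kappa)$-almost-geodesics anchored at $\alpha_\nu(0)=u_\nu\to u_*\in\Omega$ with $\alpha_\nu(t_\nu)=a_\nu$ and $t_\nu\le C$. On compact subsets of $\Omega$, Result~\ref{R:kob-hyp-cond} provides a lower bound $k_\Omega\ge c\,h$, which together with $k_\Omega(\alpha_\nu(s);\alpha_\nu'(s))\le1$ makes the $\alpha_\nu$ uniformly $h$-Lipschitz there; by Arzel\`a--Ascoli a subsequence converges locally uniformly, over a maximal parameter interval, to a limiting $(1,\kappa)$-almost-geodesic $\alpha$ with $\alpha(0)=u_*$.

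The crux is then to show that this limit does not reach $\bdy\Omega$ before parameter $t_*:=\lim t_\nu$ --- that is, that an almost-geodesic of bounded length issuing from the interior point $u_*$ cannot escape to the boundary --- so that $a_\nu=\alpha_\nu(t_\nu)\to\alpha(t_*)\in\Omega$, contradicting the compact divergence that placed $\xi$ in $\bdy_\infty\Omega$. Establishing this non-escape, using Kobayashi hyperbolicity and the uniform length bound $t_\nu\le C$ to confine the limiting curve to a compact subset of $\Omega$, is the delicate step and the heart of the argument; once it is in place, the two-point principle and hence the lemma follow.
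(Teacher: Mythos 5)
Your reduction of the lemma to the ``two-point principle'' is logically clean, but that principle is \emph{false} in the setting of this paper, so the gap you flag at the end (the ``non-escape'' step) cannot be filled by any argument. Here is a counterexample assembled from results in the paper itself. Let $\Omega:=\mathbb{B}^2\setminus\{0,p_1\}$, where $\mathbb{B}^2\subset\C^2$ is the unit ball and $p_1=(1/2,0)$. Since $\bdy\mathbb{B}^2$ is visible and $S=\{0,p_1\}$ is closed, totally disconnected, with $\mathcal{H}^{2}(S)=0$ and $\mathbb{B}^2\setminus S$ connected, Theorem~\ref{T:domain-totally-discoonected-visible} shows that $\bdy\Omega$ is visible, hence weakly visible. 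As $\Omega$ is bounded, its Euclidean closure $\overline\Omega$ is an admissible compactification, and $\bdy_\infty\Omega=\bdy\Omega$ contains the two distinct points $0$ and $p_1$. By \cite[Theorem~3.2.22]{kobayashi:hcs98}\,---\,this is exactly \eqref{E:kobayshi-distance-set}\,---\,we have $K_\Omega=K_{\mathbb{B}^2}\!\mid_{\Omega\times\Omega}$. Now take $a_\nu\to 0$ and $b_\nu\to p_1$ with $a_\nu,b_\nu\in\Omega$: then $K_\Omega(a_\nu,b_\nu)\to K_{\mathbb{B}^2}(0,p_1)<\infty$, yet $\xi=0\neq p_1=\eta$ in $\bdy_\infty\Omega$. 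So weak visibility does \emph{not} imply your principle. The same example shows why the ``crux'' you defer is genuinely impossible: since $K_\Omega$ is the restriction of $K_{\mathbb{B}^2}$, there are $(1,\kappa)$-almost-geodesics of uniformly bounded parameter length issuing from a fixed interior point whose endpoints converge to the puncture $0\in\bdy\Omega$. In an incomplete domain, bounded Kobayashi distance to a compact set simply does not confine a sequence to a compact subset of $\Omega$, and visibility does not repair this.

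The lemma is nevertheless true, and the reason lies precisely in the structure your reduction throws away: the sequences are not arbitrary, they are $f_\nu(o)$ and $f_\nu(x)$ for a \emph{single} sequence of holomorphic maps that is \emph{compactly divergent}. This is what the paper's proof uses: after invoking Corollary~\ref{C:visible-weak-visibility-domain}, it runs the argument of \cite[Lemma~11.1]{bharalizimmer:gdwnv23}, which (as the paper notes) works by examining maps defined on intervals of $\R$\,---\,i.e., one studies the images under $f_{\nu_j}$ of a fixed curve joining $o$ to $x$. Such an image is a \emph{connected} set joining $f_{\nu_j}(o)$ to $f_{\nu_j}(x)$ which, by compact divergence, eventually avoids every compact subset of $\Omega$; it is this connectedness, combined with weak visibility, that forces the two limit points to coincide. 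In the example above one sees this concretely: no compactly divergent sequence can satisfy $f_{\nu_j}(o)\to 0$ and $f_{\nu_j}(x)\to p_1$, because the image of a path from $o$ to $x$ would be a connected set confined to an arbitrarily small neighbourhood of $\bdy\Omega$, while small neighbourhoods of $0$, of $p_1$, and of $\bdy\mathbb{B}^2$ are pairwise disjoint. Any correct proof must retain this use of holomorphy and connectedness; the purely metric hypothesis $\sup_\nu K_\Omega(a_\nu,b_\nu)<\infty$ is provably insufficient.
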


\begin{proof}
By Corollary~\ref{C:visible-weak-visibility-domain}, we deduce that $\Omega$ is a weak
visibility domain subordinate to $\Omc$. At this stage, the argument proceeds exactly as in the proof of
\cite[Lemma~11.1]{bharalizimmer:gdwnv23} with $\bdy\overline\Omega^{End}$ replaced by  
$\bdy_\infty\Omega$, which works because charts are irrelevant to the latter proof. The proof of
\cite[Lemma~11.1]{bharalizimmer:gdwnv23} relies on the sequential compactness of the specific
admissible compactification $\overline\Omega^{End}$ and on examining maps defined on intervals of
$\R$, which carry over verbatim to our case.
\end{proof}

\begin{lemma}\label{L:wolff-denjoy-l-2}
Let $X$ be a complex manifold and $\Omega\varsubsetneq X$ be a Kobayashi hyperbolic domain such that
$\bdy\Omega$ is weakly visible. Let
$\Omc$ be any admissible compactification of $\overline\Omega$. Let $z_0\in\Omega$. Suppose
$f:\Omega\lrarw\Omega$ is a holomorphic map that satisfies
\[
  \limsup_{\nu\to\infty}K_\Omega(f^{\nu}(z_0),z_0)=\infty.
\]
Then, there exists $p\in\bdy_\infty\Omega$ such that for every sequence
$(\mu_k)_{k\geq 1}\subset\N$ that satisfies\linebreak
$\lim_{k\to\infty}K_\Omega(f^{\mu_k}(z_0),z_0)
=\infty$, we have
\[
  \lim_{k\to\infty}f^{\mu_k}(z_0)=p.
\]
\end{lemma}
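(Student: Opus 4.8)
The plan is to put the hypothesis into purely metric form, then split the argument into an \emph{existence} part (producing a candidate $p$) and a \emph{uniqueness} part (showing every $K_\Omega$-escaping subsequence has the same $\Omc$-limit). The only use I would make of the hypothesis on $\bdy\Omega$ is through Corollary~\ref{C:visible-weak-visibility-domain}: weak visibility of $\bdy\Omega$ is equivalent to $\Omega$ being a weak visibility domain subordinate to $\Omc$, and it is this property, together with the sequential compactness of $\Omc$, that I would invoke. Write $z_n:=f^n(z_0)$. Since $f$ is holomorphic it is $K_\Omega$-nonexpansive, so for any sequence $(\mu_k)$ with $K_\Omega(z_{\mu_k},z_0)\to\infty$ and any compact $L\subset\Omega$ one has $K_\Omega(f^{\mu_k}(x),z_0)\ge K_\Omega(z_{\mu_k},z_0)-K_\Omega(x,z_0)$ for all $x\in L$, and $K_\Omega(x,z_0)$ is bounded on $L$. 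Hence $(f^{\mu_k})$ is compactly divergent in ${\rm Hol}(\Omega,\Omega)$, and Lemma~\ref{L:wolff-denjoy-l-1} produces a subsequence $(f^{\mu_{k_j}})$ converging locally uniformly to a constant $p\in\bdy_\infty\Omega$; in particular $z_{\mu_{k_j}}\to p$. Because $\Omc$ is sequentially compact, the full conclusion of the lemma is equivalent to the assertion that every subsequential $\Omc$-limit of every such sequence $(z_{\mu_k})$ equals one and the same point. Thus the whole statement reduces to a uniqueness claim.

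For the uniqueness claim I would argue by contradiction: suppose $(\mu_k),(\nu_k)$ satisfy $K_\Omega(z_{\mu_k},z_0),K_\Omega(z_{\nu_k},z_0)\to\infty$ and admit $\Omc$-limits $p\ne q$, and assume after relabelling that $\mu_k<\nu_k$. Fix $\Omc$-open neighbourhoods $V_p\ni p$, $V_q\ni q$ with $\overline{V_p}^\infty\cap\overline{V_q}^\infty=\emptyset$, so $z_{\mu_k}\in V_p\cap\Omega$ and $z_{\nu_k}\in V_q\cap\Omega$ for large $k$. By Proposition~\ref{P:almost-geodesic} join $z_{\mu_k}$ to $z_{\nu_k}$ by a $(1,\kappa)$-almost-geodesic $\tau_k$; the weak visibility domain property subordinate to $\Omc$ supplies a compact $K\subset\Omega$ met by every $\tau_k$, say at $a_k:=\tau_k(s_k)$, and (passing to a subsequence) $a_k\to a\in K$. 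The almost-geodesic inequalities behind Lemma~\ref{L:to-prove-almost-geodesic} give $K_\Omega(z_{\mu_k},a_k)+K_\Omega(a_k,z_{\nu_k})\le K_\Omega(z_{\mu_k},z_{\nu_k})+3\kappa$, while nonexpansiveness gives $K_\Omega(z_{\mu_k},z_{\nu_k})\le K_\Omega(z_0,z_{\nu_k-\mu_k})$. Inserting the lower bounds $K_\Omega(z_{\mu_k},a_k)\ge K_\Omega(z_{\mu_k},z_0)-\sup_{w\in K}K_\Omega(z_0,w)$ and its analogue for $z_{\nu_k}$ forces $K_\Omega(z_0,z_{\nu_k-\mu_k})\to\infty$; that is, the ``difference orbit'' $f^{\,\nu_k-\mu_k}(z_0)$ itself escapes in $K_\Omega$, so that it can never be trapped in a compact subset of $\Omega$.

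To close the contradiction I would combine this with the identity $z_{\nu_k}=f^{\mu_k}\bigl(f^{\,\nu_k-\mu_k}(z_0)\bigr)$ and the local-uniform convergence $f^{\mu_{k_j}}\to p$: if the difference orbit could be kept in a compact subset of $\Omega$ one would read off $q=p$ directly, and the previous paragraph shows precisely that it cannot, which localises the remaining work onto the escaping difference orbit. From here the contradiction is completed exactly as in \cite[Theorem~4.3]{bharalimaitra:awnovfoewt21} (equivalently, the relevant portion of \cite[Section~11]{bharalizimmer:gdwnv23}), with $\bdy\overline\Omega^{End}$ replaced throughout by $\bdy_\infty\Omega$; that argument is purely metric- and compactification-theoretic and transfers once one has the weak visibility domain property subordinate to $\Omc$ and the sequential compactness of $\Omc$, charts playing no role.

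The step I expect to be the main obstacle is this uniqueness argument, and the genuine difficulty is that $(\Omega,K_\Omega)$ is not assumed Cauchy-complete. Consequently a bounded $K_\Omega$-distance to the compact set $K$ does \emph{not} imply relative compactness in $\Omega$, so one cannot naively conclude that a sequence lying within bounded $K_\Omega$-distance of $K$ subconverges to an interior point. The way around this is to exploit that the limits in question arise from sequences with $K_\Omega(\,\cdot\,,z_0)\to\infty$ — they live at infinite Kobayashi distance — and to run every separation and attraction argument in the $\Omc$-topology rather than in any ambient manifold topology, which is exactly the function of the admissible compactification here.
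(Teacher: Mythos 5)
Your reduction is sound as far as it goes, and it matches the paper's strategy: the paper also passes through Corollary~\ref{C:visible-weak-visibility-domain} and then defers entirely to the (purely metric) proof of \cite[Lemma~11.2]{bharalizimmer:gdwnv23}, with $\bdy\overline\Omega^{End}$ replaced by $\bdy_\infty\Omega$. The problem is your uniqueness argument, which is where the real content lies. Writing $d_n:=K_\Omega(f^n(z_0),z_0)$ and $C:=\sup_{w\in K}K_\Omega(z_0,w)$, the inequality you derive, $d_{\mu_k}+d_{\nu_k}\leq d_{\nu_k-\mu_k}+2C+3\kappa$, is correct but yields no contradiction by itself: it only says the ``difference orbit'' escapes at least as fast as the sum, which is perfectly consistent with everything assumed. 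Your dichotomy does not close the loop either --- both of its horns assert the same thing (the difference orbit escapes), as you essentially concede --- and the closing claim that ``from here the contradiction is completed exactly as in \cite[Theorem~4.3]{bharalimaitra:awnovfoewt21}'' is not accurate: that argument does not pass through your intermediate position and cannot be resumed from it. Iterating your construction only produces more escaping sequences with more subsequential limits; nothing terminates.

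The missing device is the selection of \emph{record times} (Karlsson's trick), which is the crux of the proofs in \cite{bharalimaitra:awnovfoewt21} and \cite{bharalizimmer:gdwnv23}: one considers $N:=\{n\in\N : d_n>d_j \text{ for all } j<n\}$, which is infinite because $\limsup_n d_n=\infty$, extracts the candidate point $p$ as an $\Omc$-limit of $(f^n(z_0))_{n\in N}$, and then lets record iterates play the role of your $z_{\nu_k}$. For $\mu_k<n$ with $n\in N$ one has $d_{n-\mu_k}<d_n$, so \textbf{your own inequality} collapses to $d_{\mu_k}\leq 2C+3\kappa$, contradicting $d_{\mu_k}\to\infty$; without the record selection the term $d_{\nu_k-\mu_k}$ is uncontrolled from above and no contradiction is reachable. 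A secondary issue: Lemma~\ref{L:wolff-denjoy-l-1} as stated gives only pointwise convergence $f^{\nu_j}(x)\to p$, not locally uniform convergence in the $\Omc$-sense, so even the conditional horn of your dichotomy needs an extra argument (it can be repaired, e.g., by applying the weak visibility property to the pair of putative limits together with the fact that $d_{\mu_k}\to\infty$, since bounded $K_\Omega$-distance alone does not force a common $\Omc$-limit when $(\Omega,K_\Omega)$ is incomplete --- consider $\mathbb{B}^2\setminus\{0\}$, where the puncture is a boundary point at finite Kobayashi distance).
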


\begin{proof}
From Corollary~\ref{C:visible-weak-visibility-domain} it follows that $\Omega$ is a weak visibility
domain
subordinate to $\Omc$. At this stage, we can essentially follow the same argument as in the proof of
\cite[Lemma~11.2]{bharalizimmer:gdwnv23} with $\bdy\overline\Omega^{End}$ replaced by  
$\bdy_\infty\Omega$\,---\,which works because the latter proof relies on the sequential compactness 
of the specific admissible compactification $\overline\Omega^{End}$ and on the behaviour of
$(1,1)$-almost-geodesics.
\end{proof}

\begin{result}[{\cite[Theorem~2.4.3]{abate:ithmtm89}}]\label{R:taut}
Let $X$ be a complex manifold such that $X$ is taut. Suppose $f:X\lrarw X$ is a holomorphic map. Then,
either of the following holds:
\begin{itemize}[leftmargin=25pt]
 \item[$(a)$]$(f^{\nu})_{\nu\geq 1}$ is relatively compact in
 ${\rm Hol}(X,X)$; or
 \item[$(b)$]$(f^\nu)_{\nu\geq 1}$ is compactly divergent.
\end{itemize}
\end{result}

Now, we are in a position to give a proof of Theorem~\ref{T:wolff-denjoy}.

\begin{proof}[The proof of Theorem~\ref{T:wolff-denjoy}]
Fix a Hermitian metric $h$ on $X$ and let $d_h$ be the distance induced by $h$ on $X$. 
Since $\Omega$ is taut, from Result~\ref{R:taut} it follows that either for every $x\in\Omega$, the set 
$\{f^{\nu}(x):\nu\geq 1\}$ is relatively compact in
$\Omega$ or $(f^\nu)_{\nu\geq 1}$ is compactly divergent. Therefore, we only need to show that $(b)$
follows when $(f^\nu)_{\nu\geq 1}$ is compactly divergent. Since $\bdy\Omega$ is weakly visible, by
Corollary~\ref{C:visible-weak-visibility-domain} we deduce that $\Omega$ is a weak
visibility domain subordinate to $\Omc$. Therefore, at this stage, the proof proceeds in a similar way as
the proof of \cite[Theorem~1.8]{bharalimaitra:awnovfoewt21}, with Lemma~\ref{L:wolff-denjoy-l-1}
serving in place of Montel's Theorem, Lemmas~\ref{L:wolff-denjoy-l-1}
and~\ref{L:wolff-denjoy-l-2} replacing any reference to \cite[Theorem~4.3]{bharalimaitra:awnovfoewt21}
and \cite[Proposition~4.1]{bharalimaitra:awnovfoewt21}, respectively, and the words ``$\Omega$ is a weak
visibility domain subordinate to $\Omc$" replacing the phrase ``$\Omega$ is a 
visibility domain". This works because the proof of \cite[Theorem~1.8]{bharalimaitra:awnovfoewt21} is 
metric-geometrical and charts are irrelevant to the latter proof. Essentially, the latter proof relies
on $(1,\kappa)$-almost-geodesics.
So, the only modifications that are required are the following:
\begin{itemize}
 \item We need to define a neighbourhood of $\xi$, which will be the analogue of
 $U_{\delta}$ appearing in the proof of \cite[Theorem~1.8]{bharalimaitra:awnovfoewt21},
 when $\xi\in\bdy_\infty\Omega$. Fix $\wt z\in\Omega$. For
 $\xi\in\Omc\setminus\emb(\overline\Omega)$, define
 \[
   U_\delta:= \text{the connected component of
   $\Omc\setminus \emb\big(\overline{B_{d_h}(\wt z,\delta^{-1})}\cap
   \overline{\Omega}\big)$ that contains $\xi$}.
 \]
 For $\xi\in\bdy\Omega$
 define $U_\delta:=B_{d_h}(\xi,\delta)\cap\overline\Omega$.
 \item Now, for arbitrary $\delta>0$ and $\xi\in\bdy_\infty\Omega$ define the function
 $G_\delta$ by 
 \[
   G_\delta(x_1,x_2):=\inf\{K_\Omega^m(x_1,x_2):\text{$m\in\N$, $f^m(x_1)\in U_\delta$}\}.
 \]
 \item For any arbitrary compact subset $K\subset \Omega$ and $\eta\in\bdy_\infty\Omega$, 
 we need to show that 
 \[
   \varepsilon:=\liminf_{z\to\eta}\inf_{k\in K} K_\Omega(k,z)
  \]
 is in $(0,\infty]$ (where $\eta$ has the same meaning as in \cite{bharalimaitra:awnovfoewt21}).
 Since $\Omega$
 is a Kobayashi hyperbolic domain, by \cite[Theorem~3.2.1]{kobayashi:hcs98} we have
 $Top(\Omega)=Top(K_\Omega)$ (recall: $Top(\Omega)$ denotes the manifold topology on $\Omega$). 
 The argument to show that $\varepsilon\in(0,\infty]$ in the proof of
 \cite[Theorem~1.14]{bharalizimmer:gdwnv23} relies only on the latter fact and, so, the
 same argument applies here.  
\end{itemize}
In view of the opening comments above, the result follows. 
\end{proof}
\smallskip

\section{Connections with Gromov hyperbolicity}\label{S:connnection-gromov-hyperbolic}
In this section, we shall follow the notation introduced in 
Sections~\ref{SS:Gromov-connection} and~\ref{SS:Gromov}.
However, we need some more notation and concepts before we can prove 
Theorem~\ref{T:gromov-boundary-visible}. These are taken from \cite{bharalizimmer:gdwnv23} but, 
in the interests of exposition that is reasonably self-contained, we present them here.

\begin{definition}\label{D:quasi-geodesic}
Let $X$ be a complex manifold and $\Omega\varsubsetneq X$ be a Kobayashi hyperbolic domain. Let
$I\subset\R$ be an interval. For $\kappa\geq 0$, a path $\sigma:I\lrarw\Omega$ is said to be a
\emph{$(1,\kappa)$-quasi-geodesic} if for all $s,t\in I$,
\[
  |s-t|-\kappa\leq K_\Omega (\sigma(s),\sigma(t))\leq |s-t|+\kappa.
\]
\end{definition}
 
\begin{definition}[see {\cite[Definition~10.2]{bharalizimmer:gdwnv23}}]\label{D:admissible-family}
Let $X$ be a complex manifold and $\Omega\varsubsetneq X$ be a Kobayashi hyperbolic domain. Fix 
$\kappa\geq 0$. We shall say that a family $\mathcal{F}$ of continuous $(1,\kappa)$-quasi-geodesics is a 
\emph{$\kappa$-admissible family} if
\begin{itemize}[leftmargin=25pt]
  \item[$(a)$]for each pair of distinct points $x,y\in\Omega$, there exists a continuous 
  $(1,\kappa)$-quasi-geodesic $\sigma\in\mathcal{F}$ joining $x$ and $y$,
  \item[$(b)$]if a continuous $(1,\kappa)$-quasi-geodesic $\sigma:[0,T]\lrarw\Omega$ is in $\mathcal{F}$, then so
  is $\overline{\sigma}$, where
  \[
    \overline{\sigma}(t):=\sigma(T-t),\quad t\in[0,T].
  \]
\end{itemize}
\end{definition}

For $x,y\in\Omega$, we shall use the notation $\sigma|x\frown y$ to denote that $\sigma$ is a continuous
$(1,\kappa)$-quasi-geodesic (for some $\kappa\geq 0$) joining $x$ and $y$. In view of
Proposition~\ref{P:almost-geodesic}, for any $\kappa>0$, there exists a non-empty $\kappa$-admissible
family. Therefore, the above discussion is naturally relevant to the ideas we will be discussing in this
section.
\smallskip

We can now give the following definition, wherein we will commit the following abuse of notation: for
any curve $\sigma$, we shall write the \textbf{image} of $\sigma$ simply as $\sigma$.

\begin{definition}\label{D:rips-condition}
Let $X$ be a complex manifold and $\Omega\varsubsetneq X$ be a Kobayashi hyperbolic domain. 
Fix $\kappa\geq 0$.
\begin{itemize}[leftmargin=25pt]
  \item[$(a)$]A \emph{$\kappa$-triangle} is a triple of continuous $(1,\kappa)$-quasi-geodesics 
  $\alpha|x\frown o$, $\beta|y\frown o$, $\gamma|x\frown y$, where $x,y,o\in\Omega$. We call $x,y,o$
  the \emph{vertices}, and $\alpha,\beta,\gamma$ the \emph{sides} of the $\kappa$-triangle.
  \item[$(b)$] Let $\delta\geq 0$. A $\kappa$-triangle $\Delta$ is said to be \emph{$\delta$-slim} if,
  for each side $\sigma$ of $\Delta$,
  \[
    \sigma\subset\bigcup_{\Sigma\in\Delta\setminus\{\sigma\}}
    \{z\in\Omega:K_\Omega(z,\Sigma)\leq\delta\}.
  \]
  \pagebreak
  \item[$(c)$]Let $\mathcal{F}$ be a $\kappa$-admisible family of continuous $(1,\kappa)$-quasi-geodesics. Let
  $\delta\geq 0$. We say that $\Omega$ satisfies the \emph{$(\mathcal{F},\kappa,\delta)$-Rips condition}
  if every $\kappa$-triangle in $\Omega$ whose sides are (the images of) continuous $(1,\kappa)$-quasi-geodesics
  belonging to $\mathcal{F}$ is $\delta$-slim.
\end{itemize}
\end{definition}

The result that we need, and which the above notation serves, is the following:

\begin{result}[paraphrasing {\cite[Theorem~10.6]{bharalizimmer:gdwnv23}}]\label{R:Gromov-hyp-rips-condition}
Let $X$ be a connected complex manifold and $\Omega\varsubsetneq X$ be a Kobayashi hyperbolic domain.
Let $\kappa\geq 0$ be such that there exists some $\kappa$-admissible family of 
continuous $(1,\kappa)$-quasi-geodesics. Suppose $(\Omega,K_\Omega)$ is $\delta$-hyperbolic for some 
$\delta\geq 0$. Then, for any $\kappa$-admissible family $\mathcal{F}$, $\Omega$ satisfies the
$(\mathcal{F},\kappa,3\delta+6\kappa)$-Rips condition.
\end{result}

The above result paraphrases \cite[Theorem~10.6]{bharalizimmer:gdwnv23} in two ways. Firstly,
\cite[Theorem~10.6]{bharalizimmer:gdwnv23} has been stated for domains $\Omega\varsubsetneq\Cn$. Secondly,
a partial converse of the above result, with $X=\Cn$, forms a part of 
\cite[Theorem~10.6]{bharalizimmer:gdwnv23}. We have not stated (the relevant version of) the latter because
it is not needed here. The proof of \cite[Theorem~10.6]{bharalizimmer:gdwnv23} extends verbatim to the
setting of Result~\ref{R:Gromov-hyp-rips-condition} because this proof relies on viewing $\Omega$ merely
as a metric space, i.e., $(\Omega,K_\Omega)$, and on the fact that the  quasi-geodesics
considered are continuous (in \cite{bharalizimmer:gdwnv23}, continuous $(1,\kappa)$-quasi-geodesics are
called ``$(1,\kappa)$-near-geodesics'').
\smallskip

In the proof that follows:
\begin{itemize}
  \item The symbol $\emb$ will denote the embedding
  $\Omega\hookrightarrow\overline\Omega^G$ that is a part of the definition of $\overline\Omega^G$.
  \item $\lim$ will denote limits in all contexts other than limits with respect to the
  topology on
  $\overline\Omega^G$.
  \item Given $x, y, o\in \Omega$, the Gromov product with the underlying distance being
  $K_{\Omega}$ will be denoted by $(x|y)_o^{\Omega}$.
\end{itemize}
\smallskip

Also, we shall commit the (mild) abuse of notation described in Section~\ref{SS:visi-compacti-WD}. With
these words, we can now present

\begin{proof}[The proof of Theorem~\ref{T:gromov-boundary-visible}]
Fix $\xi,\eta\in\bdy_G\Omega$ such that $\xi\neq\eta$. We also fix $\kappa\geq 0$. Consider
sequences $(x_\nu)_{\nu\geq 1}\subset\Omega$ and $(y_\nu)_{\nu\geq 1}\subset\Omega$ such that
$x_\nu\xrightarrow{G}\xi$, and $y_\nu\xrightarrow{G}\eta$.
For any pair $x,y\in\Omega$
we define the set (with $\kappa$ understood to be fixed as above)
\[
  AG_{\Omega}(x,y):=\{\gamma:[0,T]\lrarw\Omega:
  \text{$\gamma$ is a $(1,\kappa)$-almost-geodesic joining $x$ and $y$}\}.
\]
\smallskip

We shall show that there exists a compact set $K\subset \Omega$ such that for every $\nu\geq 1$, and
every $\gamma\in AG_\Omega(x_\nu,y_\nu)$, we have $\langle\gamma\rangle\cap K\neq\emptyset$. If
possible, let this assertion be false. Let $(K_\nu)_{\nu\geq 1}$ be an exhaustion by compacts of
$\Omega$. Then, passing to a subsequence and relabelling, if needed, we may assume, for each 
$\nu\geq 1$, there exists a $(1,\kappa)$-almost-geodesic $\gamma_\nu\in AG_\Omega(x_\nu,y_\nu)$ such 
that $\langle\gamma_\nu\rangle\cap K_\nu=\emptyset$. Since $\overline\Omega^G$ is locally compact,
there exists a $\overline\Omega^G$-open neighbourhood $V_\xi$ of $\xi$ such that $\overline{V_\xi}^G$ is
compact. Fix neighbourhoods $W_\xi$, $W_{\xi}^1$, $W_{\xi}^2$ of $\xi$ such that 
$W_\xi\Subset W_\xi^1\Subset
W_\xi^2\Subset V_\xi.$ Without loss of generality, we may assume that $x_\nu\in W_\xi$ for all $\nu\geq 1$. 
Let $r_\nu>0$ and ${y'_\nu}=\gamma_\nu(r_\nu)$ be such that ${y'_\nu}\in
\emb^{-1}\big(W_\xi^2\setminus\big(\overline{W_\xi^1}^G\bigcup\bdy_G\Omega\big)\big)$, and
such that the image of $\wt{\gamma}_{\nu}:=\gamma_\nu{\mid}_{[0,r_\nu]}$ is contained in $W_\xi^2$.
Since $\overline{V_\xi}^G$ is compact, by passing to a subsequence and relabelling, if needed, we may
assume that there exists a point $\eta'\in\overline\Omega^G$ such that $y'_\nu\xrightarrow{G}\eta'$.
Since $y'_\nu\notin K_\nu$ for each $\nu\geq 1$, we have $\eta'\in\bdy_G\Omega$. By
construction, $\xi\neq\eta'$. Therefore, by Part $(b)$ of Result~\ref{R:gromov-topology-property}, it
follows that $(x_\nu)_{\nu\geq 1}$ and $(y_\nu)_{\nu\geq 1}$ are Gromov sequences and 
$\xi=[x_\nu]\neq[y'_\nu]=\eta'$. Therefore, by Definition~\ref{D:gromov-product}, there exists a
constant $M>0$ such that $(x_\nu|y'_\nu)_o^\Omega<M$ for all $\nu\geq 1$.
Let $\kappa'>\kappa$. Define $\mathscr{F}:=\{\wt\gamma_\nu:\nu\geq 1\}$. Since each $\wt\gamma_\nu$ is 
also a $(1,\kappa')$-almost-geodesic in $\Omega$, $\mathscr{F}$ is a subset of a $\kappa'$-admissible
family, namely the collection of all $(1,\kappa')$-almost-geodesics in $\Omega$. Denote the latter family
as $\mathcal{F}$.
\smallskip

Fix
a point $o\in\Omega$. Since $\kappa'>0$, in view of Proposition~\ref{P:almost-geodesic}, for each
$\nu\geq 1$ we can fix $\alpha_\nu$, a $(1,\kappa')$-almost-geodesic joining $x_\nu$ and $o$, and fix
$\beta_\nu$, 
a $(1,\kappa')$-almost-geodesic joining $y'_\nu$ and $o$. 
Let $\Delta_\nu$ denote the $\kappa'$-traingle whose sides are $\alpha_\nu$, $\beta_\nu$, and 
$\wt\gamma_\nu$. Let $\delta\geq 0$ be such that $(\Omega,K_\Omega)$ is $\delta$-hyperbolic. Let
$\delta':=3\delta+6\kappa'$. By Result~\ref{R:Gromov-hyp-rips-condition}, $\Omega$ satisfies the
$(\mathcal{F},\kappa',\delta')$-Rips condition. Therefore,  for each $\nu\geq 1$,
\[
  \wt\gamma_\nu\subset\bigcup_{\Sigma\in\Delta_\nu\setminus\{\wt\gamma_\nu\}}
  \{z\in\Omega:K_\Omega(z,\Sigma)\leq\delta'\}.
\]
Since $\wt\gamma_\nu$ is connected, for each $\nu$, there exists 
$z_\nu\in\wt\gamma_\nu$ such that $z_\nu\in\{z\in\Omega:K_\Omega(z,\alpha_\nu)\leq\delta'\}\cap
\{z\in\Omega:K_\Omega(z,\beta_\nu)\leq\delta'\}$. For each $\nu\geq 1$, fix $a_\nu\in\alpha_\nu$ and
$c_\nu\in\beta_\nu$
such that $K_\Omega(a_\nu,z_\nu)$, $K_\Omega(c_\nu,z_\nu)\leq\delta'$. 
By triangle inequality it follows that 
\begin{align}\label{E:gromov-hyperbolic-theorem}
 K_\Omega(o,a_\nu)\geq K_\Omega(o,z_\nu)-\delta',\\
 K_\Omega(o,c_\nu)\geq K_\Omega(o,z_\nu)-\delta'\label{E:gromov-hyperbolic-theorem-2}.
\end{align}
Therefore, from 
\eqref{E:gromov-hyperbolic-theorem}, and since $\alpha_\nu$ is a $(1,\kappa')$-almost-geodesic, we have 
\begin{align*}
 K_\Omega(o,a_\nu)+K_\Omega(a_\nu,x_\nu)&\leq K_\Omega(o,x_\nu)+3\kappa'\\ 
 \implies K_\Omega(a_\nu,x_\nu)&\leq K_\Omega(o,x_\nu)-K_\Omega(o,z_\nu)+\delta'+3\kappa'.
\end{align*}
Therefore, for each $\nu\geq 1$,
\begin{align*}
 K_\Omega(x_\nu,z_\nu)&\leq K_\Omega(a_\nu,x_\nu)+K_\Omega(a_\nu,z_\nu)\\&\leq K_\Omega(o,x_\nu)-
 K_\Omega(o,z_\nu)+2\delta'+3\kappa'.  
\end{align*}
Similarly, from 
\eqref{E:gromov-hyperbolic-theorem-2}, and since $\beta_\nu$ is a $(1,\kappa')$-almost-geodesic, we have
$K_\Omega(y'_\nu,z_\nu)\leq K_\Omega(o,y'_\nu)-
K_\Omega(o,z_\nu)+2\delta'+3\kappa'$, for all $\nu\geq 1$. Combining last two inequalities, it
follows that for all
$\nu\geq 1$,
\begin{align}
 2K_\Omega(o,z_\nu)&\leq 2(x_\nu|y'_\nu)_o^\Omega+4\delta'+6\kappa'< 2M+4\delta'+6\kappa'.
 \label{E:inequality-finite-gromov} 
\end{align} 
Now, since $(z_\nu)_{\nu\geq 1}\subset V_\xi$ and $\overline{V_\xi}^G$ is compact, there exists a
subsequence
$(\nu_j)_{j\geq 1}$ and $\xi'\in\overline{V_\xi}^G$ such that $z_{\nu_j}\xrightarrow{G}\xi'$. Now,
since $z_{\nu_j}\notin K_{\nu_j}$ for all $j\geq 1$, we have $\xi'\in\bdy_G\Omega$. Therefore, by
Part~$(b)$ of Result~\ref{R:gromov-topology-property},
$(z_{\nu_j})_{j\geq 1}$ is a Gromov sequence. Hence, from Definition~\ref{D:gromov-product}, we have 
$\lim_{j\to\infty}K_\Omega(o,z_{\nu_j})=\infty$, which 
contradicts \eqref{E:inequality-finite-gromov}. Hence the result.

\end{proof}

Now, we will give a proof of Proposition~\ref{P:gromov-visible-example}. Let $D$ be as in the
statement of this proposition. The map
$\emb:D\hookrightarrow\overline D^G$ is as explained above.

\begin{proof}[The proof of Proposition~\ref{P:gromov-visible-example}]
Since $D$ is Kobayashi hyperbolic, by the distance decreasing property for the Kobayashi
distance of the inclusion map $\Omega\hookrightarrow D$, it follows that $\Omega$ is a Kobayashi
hyperbolic 
domain. Since $K$ is compact and $\mathcal{H}^{2n-2}(K)=0$,
by \cite[Theorem~3.2.22]{kobayashi:hcs98} we have
\begin{align}\label{E:kobayshi-distance-set-gromov}
 K_{\Omega}=K_D{\mid}_{\Omega\times\Omega}.
\end{align}
Since $(D,K_D)$ is Gromov hyperbolic, there exists $\delta\geq 0$ such that $(D,K_D)$ is
$\delta$-hyperbolic. Therefore, by Definition~\ref{D:Gromov-hyperbolic} and 
\eqref{E:kobayshi-distance-set-gromov}, $(\Omega,K_\Omega)$ is
$\delta$-hyperbolic. This proves part $(i)$. 
\smallskip

Now, to prove part~$(ii)$, since $\overline D$ is homeomorphic to $\overline D^G$, it is enough
to prove that $\overline\Omega^G$ is homeomorphic to
$\overline D^G\setminus\emb(K)$. Since $K$ is compact, as a \textbf{set,}
\begin{equation}\label{E:as_a_set}
  \overline D^G\setminus\emb(K) = \Omega\cup\bdy_G D.
\end{equation}
From \eqref{E:kobayshi-distance-set-gromov} and Definition~\ref{D:gromov-product}, it
follows that every equivalence class of Gromov sequences of $\Omega$ is an equivalence class of Gromov 
sequences of $D$ and the vice versa (since $K$ is compact). Therefore, as a set 
$\overline\Omega^G=\overline D^G\setminus\emb(K)$. Now, since the Gromov topology depends only on the
Kobayashi distance, by Result~\ref{R:gromov-topology-property}, \eqref{E:as_a_set}, and 
\eqref{E:kobayshi-distance-set-gromov} we have that $id_{D\setminus K}$ extends to
a homeomorphism between $\overline\Omega^G$ and $\overline D^G\setminus\emb(K)$. Therefore, clearly,
$\overline\Omega^G$ is non-compact and locally compact.
\end{proof}
\smallskip

\section*{Acknowledgements}
I would like to thank my thesis advisor, Prof. Gautam Bharali, 
for many invaluable discussions during course of this work.
I am also grateful to him for his advice on the writing of this paper. I am especially grateful to
the anonymous referees of this work for pointing out the need for some corrections to the initial draft
and for their helpful suggestions on exposition. This work is supported in part
by a scholarship from the Prime
Minister's Research Fellowship (PMRF) programme (fellowship no.~0201077) 
and by a DST-FIST grant (grant no.~DST FIST-2021 [TPN-700661]). 
\smallskip

\end{document}